\newtheorem{theorem}{Theorem}[section]
\newtheorem{corollary}[theorem]{Corollary}
\newtheorem{proposition}[theorem]{Proposition}
\newtheorem{lemma}[theorem]{Lemma}
\theoremstyle{definition}
\theoremstyle{definition}
\newtheorem{remark}[theorem]{Remark}
\newcommand\Lm{\mathscr{L}}
\newcommand\Lh{\widehat{\mathscr{L}}}
\newcommand\Gm{\lambda}
\newcommand\Gh{\widehat{\lambda}}
\newcommand\Th{\widehat{\mathscr{T}}}
\newcommand\Tm{\mathscr{T}}
\newcommand\HFK{{\rm {HFK}}}
\newcommand\HFKh{{\rm {\widehat{HFK}}}}
\newcommand\HFKm{{\rm {HFK^{-}}}}
\newcommand\CFK{{\rm {CFK}}}
\newcommand\CFKh{{\rm {\widehat{CFK}}}}
\newcommand\CFKm{{\rm {CFK^{-}}}}
\newcommand\alphas{\mbox{\boldmath$\alpha$}}
\newcommand\betas{\mbox{\boldmath$\beta$}}
\newcommand\Sym{{\rm {Sym}}}
\newcommand\ws{\mathbf w}
\newcommand\zs{\mathbf z}
\newcommand\F{\mathbb F}
\newcommand\M{\mathcal M}
\newcommand\HD{\mathcal H}
\newcommand\coker{\rm coker\,}
\def\x{\mathbf{x}}
\def\y{\mathbf{y}}
\newcommand{\mscr}{\mathscr}
\begin{document}

\title[{On the equivalence of Legendrian and transverse invariants}]{On the equivalence of Legendrian and transverse invariants in knot Floer homology}

\author[John A. Baldwin]{John A. Baldwin}
\address{Department of Mathematics \\ Princeton University}
\email{baldwinj@math.princeton.edu}
\urladdr{\href{http://www.math.princeton.edu/~baldwinj}{http://www.math.princeton.edu/\~{}baldwinj}}

\author[David Shea Vela-Vick]{David Shea Vela--Vick}
\address{Department of Mathematics \\ Columbia University}
\email{shea@math.columbia.edu}
\urladdr{\href{http://www.math.columbia.edu/~shea}{http://www.math.columbia.edu/\~{}shea}}

\author[Vera V\'ertesi]{Vera V\'ertesi}
\address{Department of Mathematics \\ Massachusetts Institute of Technology}
\email{vertesi@math.mit.edu}
\urladdr{\href{http://www.math.mit.edu/~vertesi}{http://www.math.mit.edu/\~{}vertesi}}

\thanks{JAB was partially supported by a NSF Postdoctoral Fellowship DMS-0802975 and NSF Grant DMS-1104688.\\
\indent DSV was partially supported by a NSF Postdoctoral Fellowship DMS-0902924.\\
\indent VV was partially supported by NSF Grant DMS-1104690}

\keywords{Legendrian knots, Transverse knots, Heegaard Floer homology}
\subjclass[2010]{57M27; 57R58}
\maketitle


\begin{abstract}

Using the grid diagram formulation of knot Floer homology, Ozsv\'ath, Szab\'o and Thurston defined an invariant of transverse knots in the tight contact 3-sphere. Shortly afterwards, Lisca, Ozsv\'ath, Stipsicz and Szab\'o defined an invariant of transverse knots in arbitrary contact 3-manifolds using open book decompositions. It has been conjectured that these invariants agree where they are both defined. We prove this fact by defining yet another invariant of transverse knots, showing that this third invariant agrees with the two mentioned above.

\end{abstract}

\section{Introduction} 
\label{sec:introduction}

Since the beginning of modern contact geometry in the work of Eliashberg, Legendrian and transverse knots have played a prominent role in understanding contact structures on 3-manifolds. Transverse knots arise naturally, for example, as binding components of open book decompositions, and both Legendrian and transverse knots can be used to discern subtle geometric information about the contact manifolds they inhabit.

Legendrian knots come equipped with two ``classical'' invariants: the Thurston-Bennequin number and the rotation number.  Transverse knots possess a single classical invariant called the self-linking number.  Knot types whose Legendrian or transverse representatives are classified by their classical invariants are referred to as Legendrian or transversely simple. While it is known that some knot types are simple \cite{EF,EH,EH2}, most appear not to be.  As such, developing and understanding non-classical invariants, capable of distinguishing and classifying Legendrian and transverse knots, is a central aim of contact geometry.

The first non-classical invariant of Legendrian knots, dubbed Legendrian contact homology (LCH), appeared as an outgrowth of Eliashberg and Hofer's work on symplectic field theory \cite{EGH}.  Among its other achievements, LCH provided the first examples of Legendrian non-simple knot types \cite{Ch} and resolved the Legendrian mirror problem \cite{Ng}.  Despite such progress, vanishing properties of LCH prevent one from applying this general theory to classification problems and to the study transverse knots.

A little over a decade later, Ozsv\'ath, Szab\'o and Thurston defined powerful invariants $\lambda$ and $\widehat{\lambda}$ of Legendrian links in the standard contact 3-sphere, which take values in the minus and hat versions of knot Floer homology \cite{OST}.  Their invariants are defined via grid diagrams and are thus combinatorial in nature.  Furthermore, $\lambda$ and $\widehat{\lambda}$ remain unchanged under negative Legendrian stabilization, and, therefore, give rise to transverse link invariants $\theta$ and $\widehat{\theta}$ through Legendrian approximation. We refer to $\lambda,\widehat{\lambda},\theta,\widehat{\theta}$ as the GRID invariants.  


In a different direction, Lisca, Ozsv\'ath, Stipsicz and Szab\'o used open book decompositions to define invariants $\Lm$ and $\Lh$ of (null-homologous) Legendrian knots in arbitrary contact 3-manifolds \cite{LOSS}.  These alternate invariants also take values in the minus and hat version of knot Floer homology, though they are defined in far greater generality than their combinatorial counterparts. Both $\Lm$ and $\Lh$ remain unchanged under negative Legendrian stabilization, and may therefore be used as above to define transverse knot invariants $\Tm$ and $\Th$ via Legendrian approximation.  Because they are defined using open book decompositions, these invariants are also more clearly tied to the geometry of Legendrian and transverse knot complements. We refer to $\Lm,\Lh,\Tm,\Th$ as the LOSS invariants.


It has been conjectured for several years that the GRID invariants agree with the LOSS invariants where both are defined -- for Legendrian and transverse knots in the tight contact 3-sphere, $(S^3,\xi_{std})$. Such a result would have a number of important consequences.  Generally speaking, the GRID invariants are well-suited to computation.  Even for relatively complicated knots, determining whether $\widehat{\lambda},\widehat{\theta}$ are zero or nonzero is often a straightforward process.  This computational ease underpins many of the known results regarding families of transversely non-simple knot types \cite{NOT,Ver,Ba,NK}.  On the other hand, the more geometric construction in \cite{LOSS} makes it possible to establish general properties of the LOSS invariants that are hard to prove for the GRID invariants, like Ozsv\'ath and Stipsicz's result on their behaviors under $(+1)$-contact surgeries \cite{OSti}; or Sahamie's result relating $\Lh(L)$ to the contact invariant of $(+1)$-contact surgery on $L$.  Further, it is conjectured that the Legendrian invariants defined in \cite{OST} and \cite{LOSS} are well-behaved with respect to Lagrangian cobordism. If true, this will almost certainly be proven in the LOSS context.  Knowing that ``LOSS=GRID" would allow one to combine the intrinsic advantages of each and to port results from one realm to the other. This ``LOSS=GRID" equivalence is the content of our main theorem, below.

\begin{theorem}\label{thm:LOSS_Grid_trans}
Let $K$ be a transverse knot in the standard contact 3-sphere. There exists a isomorphism of bigraded $(\mathbb{Z}/2\mathbb{Z})[U]$-modules,
\[
	\psi: \HFKm(-S^3,K) \to \HFKm(-S^3,K),
\]
which sends $\Tm(K)$ to $\theta(K)$.
\end{theorem}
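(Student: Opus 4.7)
The plan is to reduce the statement to its Legendrian counterpart and then bridge the two constructions via a third invariant arising from a common Heegaard diagram. Since both $\Tm$ and $\theta$ are defined through Legendrian approximation --- $\Tm(K) = \Lm(L)$ and $\theta(K) = \lambda(L)$ for any Legendrian approximation $L$ of the transverse knot $K$, with each Legendrian invariant unchanged under negative stabilization --- it suffices to produce a bigraded $(\mathbb{Z}/2\mathbb{Z})[U]$-module isomorphism $\psi_L : \HFKm(-S^3,L) \to \HFKm(-S^3,L)$ sending $\Lm(L)$ to $\lambda(L)$ for every Legendrian knot $L$ in $(S^3,\xist)$. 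The transverse theorem then follows by choosing a Legendrian approximation of $K$ and setting $\psi = \psi_L$.

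To establish the Legendrian equivalence, start from a grid diagram $G$ realizing $L$. Such a $G$ canonically yields two pieces of data. On one hand, the toroidal grid diagram is itself a multi-pointed Heegaard diagram, in which a distinguished intersection point represents $\lambda(L)$. On the other hand, $G$ exhibits $L$ as a closed braid, which determines an open book decomposition of $S^3$ (built from the disk open book adapted to the braid axis) having $L$ on a page; applying the Lisca--Ozsv\'ath--Stipsicz--Szab\'o recipe to this open book produces a second Heegaard diagram in which a distinguished intersection point represents $\Lm(L)$. I would define the third invariant as a preferred generator in a hybrid Heegaard diagram designed so that a controlled sequence of isotopies, handleslides, and (de)stabilizations carries it to each of the above diagrams. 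The equivalence then splits into two independent comparisons: first, check that the induced isomorphism on $\HFKm$ from the hybrid diagram to the grid diagram carries the preferred generator to $\lambda(L)$; second, check that the induced isomorphism from the hybrid diagram to the open book Heegaard diagram carries it to $\Lm(L)$.

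The main obstacle is the construction and bookkeeping of this hybrid diagram. The grid picture is rigid and combinatorial, whereas the LOSS construction depends on a choice of basis arcs on a page and is geometrically flexible. Matching the two requires selecting an arc system on the open book so that its associated Heegaard diagram agrees combinatorially with the toroidal grid diagram, and then showing that each elementary Heegaard move used to pass between these presentations either preserves the preferred generator or sends it to the correct canonical generator under the induced isomorphism on knot Floer homology. Tracking a specific intersection point through this long sequence of moves, while simultaneously respecting the Alexander/Maslov bigrading and the $U$-action, is likely to be the most delicate technical step.
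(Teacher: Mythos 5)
Your opening reduction is fine: since $\Tm(K)=\Lm(L)$ and $\theta(K)=\lambda(L)$ for a Legendrian approximation $L$, the transverse statement does follow from its Legendrian counterpart. The gap is in everything after that. Your plan for the Legendrian comparison is to build a ``hybrid'' Heegaard diagram and track a distinguished generator through an explicit sequence of isotopies, handleslides and (de)stabilizations connecting it to both the grid diagram and the open-book diagram. This is precisely the naive strategy the paper sets aside at the outset as having been tried without success: the two diagrams are radically different (a genus-one torus with $k$ pairs of linked basepoints versus a doubly-pointed diagram of genus $2g(S)$ built from a page and its basis arcs), there is no canonical sequence of moves between them, and the isomorphisms induced by isotopies and handleslides are pseudo-holomorphic triangle counts whose effect on a given cycle cannot in general be computed combinatorially. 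You flag this as ``the most delicate technical step,'' but you supply no mechanism to carry it out, and that mechanism is the entire content of the theorem.

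What the paper does instead is avoid tracking the class through moves altogether at the crucial stage. It introduces a genuinely new invariant $t$ of transverse braids in open books. The identification $\Tm = t$ requires no sequence of moves: after realizing the transverse knot as a $1$-braid near a binding component of a stabilized open book, the LOSS diagram and the BRAID diagram are literally the same diagram up to an isotopy of one basepoint, with the same distinguished generator. The identification $t = \theta$ is then achieved by an \emph{intrinsic characterization}: both classes are shown to be the image in $\HFKm(-S^3,K)$ of the generator of the rank-one top-Maslov-degree summand of the homology of the bottommost nontrivial level of the Alexander filtration induced by the braid axis $-U$ (Propositions \ref{prop:bottomfiltrationa} and \ref{prop:bottomfiltration}). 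Because the filtered quasi-isomorphism type of this filtration is an invariant of the link $K\cup -U\subset -S^3$, \emph{any} filtered map induced by Heegaard moves must carry one class to the other; no generator needs to be followed through the moves. This characterization, together with the bookkeeping of the free basepoints $w_1,w_2$ via the actions $\psi_{w_i}$ to pass between $\CFK^{-,2}$ and $\CFKm$, is the missing idea your proposal would need to succeed.
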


Since the Legendrian invariants $\Lm$ and $\lambda$ can each be defined from $\Tm$ and $\theta$ via transverse pushoff (roughly, the inverse of Legendrian approximation), the following is an immediate corollary of Theorem~\ref{thm:LOSS_Grid}.

\begin{theorem}\label{thm:LOSS_Grid}
Let $L$ be a Legendrian knot in the standard contact 3-sphere. There exists a isomorphism of bigraded $(\mathbb{Z}/2\mathbb{Z})[U]$-modules,
\[
	\psi: \HFKm(-S^3,L) \to \HFKm(-S^3,L),
\]
which sends $\Lm(L)$ to $\lambda(L)$.
\end{theorem}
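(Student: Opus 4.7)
The plan is to deduce Theorem~\ref{thm:LOSS_Grid} directly from Theorem~\ref{thm:LOSS_Grid_trans} by passing from the Legendrian knot $L$ to its transverse pushoff and invoking the compatibility of both invariant families with Legendrian approximation.

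First I would recall the definitions of the transverse invariants. By construction, $\Tm$ is defined as $\Tm(K) := \Lm(L')$ where $L'$ is any Legendrian approximation of $K$; this is well-defined precisely because $\Lm$ is invariant under negative Legendrian stabilization. Likewise, $\theta(K) := \lambda(L')$ for a Legendrian approximation $L'$, using the analogous invariance of $\lambda$. Given a Legendrian knot $L\subset(S^3,\xist)$, let $T_+(L)$ denote its transverse pushoff. Then $L$ is itself a Legendrian approximation of $T_+(L)$, so by the definitions just recalled one has the equalities
\[
\Tm(T_+(L)) = \Lm(L), \qquad \theta(T_+(L)) = \lambda(L),
\]
inside $\HFKm(-S^3,T_+(L)) = \HFKm(-S^3,L)$, where the last equality of groups uses the fact that $T_+(L)$ and $L$ share an underlying smooth knot type.

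Next I would apply Theorem~\ref{thm:LOSS_Grid_trans} to the transverse knot $K := T_+(L)$. This yields a bigraded $(\mathbb{Z}/2\mathbb{Z})[U]$-module isomorphism
\[
\psi : \HFKm(-S^3,T_+(L)) \longrightarrow \HFKm(-S^3,T_+(L))
\]
with $\psi(\Tm(T_+(L))) = \theta(T_+(L))$. Substituting the identifications from the previous paragraph shows that $\psi(\Lm(L)) = \lambda(L)$, proving the theorem.

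There is no real obstacle here once Theorem~\ref{thm:LOSS_Grid_trans} is in hand; the only point worth verifying carefully is the pushoff/approximation identity $\Tm(T_+(L)) = \Lm(L)$ (and its $\theta,\lambda$ analogue). This follows formally from the definitions of $\Tm$ and $\theta$ as Legendrian approximation classes, combined with the fact that the transverse pushoff is a one-sided inverse to Legendrian approximation at the level of transverse isotopy. Thus the entire content of Theorem~\ref{thm:LOSS_Grid} is packaged inside the transverse statement.
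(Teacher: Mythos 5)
Your proposal is correct and is exactly the paper's argument: the authors deduce Theorem~\ref{thm:LOSS_Grid} from Theorem~\ref{thm:LOSS_Grid_trans} by observing that $\Lm$ and $\lambda$ are recovered from $\Tm$ and $\theta$ via transverse pushoff, since $L$ is a Legendrian approximation of its own pushoff. You have simply written out in full the identifications $\Tm(T_+(L))=\Lm(L)$ and $\theta(T_+(L))=\lambda(L)$ that the paper leaves implicit.
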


\begin{remark}
It follows from the proofs of Theorems \ref{thm:LOSS_Grid_trans} and \ref{thm:LOSS_Grid} that analogous correspondences exist for the Legendrian and transverse invariants defined in the hat version of knot Floer homology.
\end{remark}

Since both the GRID and LOSS invariants are defined using Heegaard diagrams, there is a naive approach to showing that the two agree: construct a sequence of Heegaard moves connecting the two relevant diagrams and show that the associated isomorphism on knot Floer homology sends one invariant to the other.  Versions of this approach have been tried without success by many people.  For one thing, the Heegaard diagrams defining the two types of invariants are radically different and there is no obvious canonical sequence of moves connecting the two.  Second, there is no reason to expect, for any such sequence, the task of tracking the image of the Legendrian invariant under the associated isomorphism to be easy or even combinatorial. With this in mind, we employ a completely different strategy to prove Theorem \ref{thm:LOSS_Grid_trans}.

We start by defining a third invariant of transverse links in arbitrary contact 3-manifolds. Our construction uses Pavelescu's result \cite{EPthesis} that any transverse link $K \subset (Y,\xi)$ can be braided with respect to any open book decomposition $(B,\pi)$ for $(Y,\xi)$. Given such a transverse braid, we define classes $t(K) \in \HFKm(-Y,K)$ and $\widehat{t}(K) \in \HFKh(-Y,K)$ which remain unchanged under positive braid stabilization and positive open book stabilization.  As a result, $t$ and $\widehat{t}$ define transverse invariants, by Pavelescu's analogue of the Transverse Markov Theorem \cite{EPthesis}. We refer to $t,\widehat{t}$ as the BRAID invariants.

Our construction differs from those in \cite{OST} and \cite{LOSS} in a couple interesting ways.  For one thing, our BRAID invariants are manifestly transverse invariants -- they are defined in terms of transverse links rather than in terms of Legendrian links via Legendrian approximation. Furthermore, our invariants lend themselves more naturally to understanding the connections between transverse links, braids and mapping class groups, a rich area of exploration even for knots in $(S^3,\xi_{std})$. As a preliminary step in this direction, we define the notion of a \emph{right-veering} braid, following Honda, Kazez and Mati{\'c} \cite{HKM4}, and prove the theorem below.

\begin{theorem}
\label{thm:rightveering}
Suppose $K$ is a transverse braid with respect to some open book for $(Y,\xi)$. If $K$ is not right-veering, then $\widehat t(K) = 0$.
\end{theorem}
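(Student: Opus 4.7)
The plan is to adapt the Honda--Kazez--Mati\'c argument from \cite{HKM4} (originally proving that non-right-veering monodromies force the contact invariant to vanish) to the braid setting. The BRAID invariant $\widehat{t}(K)$ will be defined, by analogy with the LOSS construction, as a distinguished intersection point $\x$ in a Heegaard diagram obtained by doubling a page $S$ of an open book $(B,\pi)$ along which $K$ is braided, using a basis of properly embedded arcs on $S$ with endpoints on $\partial S$ and on the braid punctures (the intersection points $K \cap S$). The $\alpha$- and $\beta$-curves are built from this basis and its image under the monodromy $\phi$, so that $\x$ is the tuple of ``canonical'' intersection points, one on each arc near its endpoint. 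A transverse braid $K$ should be called right-veering if, for every such basis arc $\gamma$ with an endpoint on $\partial S \cup (K\cap S)$, the arc $\phi(\gamma)$ lies weakly to the right of $\gamma$ at that endpoint, in the cyclic order induced by the page orientation.

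First I would verify the expected reformulation of right-veeringness: $K$ is right-veering if and only if, for \emph{some} basis of arcs $\{a_1,\dots,a_k\}$ adapted to $K$, every $\phi(a_i)$ lies to the right of $a_i$ at both endpoints after tightening the images. The nontrivial direction requires checking that non-right-veeringness at one arc implies non-right-veeringness with respect to any basis, which is the same kind of cyclic-ordering argument carried out by Honda--Kazez--Mati\'c for the closed page case, now accounting additionally for the punctures created by the braid.

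Second, assuming $K$ is not right-veering, I would use the reformulation to choose a basis in which some arc $a_i$ is moved strictly to the left at one of its endpoints $p$. Because $p$ lies either on $\partial S$ or on a braid puncture, the local picture in the Heegaard diagram near $p$ is controlled: near $p$, the $\alpha$-curve coming from $a_i$ and the $\beta$-curve coming from $\phi(a_i)$ create an embedded bigon in the Heegaard surface with one vertex at the canonical intersection point belonging to $\x$ and the other vertex at a nearby intersection point $y_i$. Replacing the $i$-th coordinate of $\x$ by $y_i$ yields a generator $\y$ such that this bigon is a holomorphic disk (of Maslov index one) from $\y$ to $\x$, avoiding both basepoints of the diagram. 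This is where I expect the real technical work to lie, because one must verify simultaneously that (i) the bigon is embedded and avoids the $z$- and $w$-basepoints associated to the braid, and (ii) no other disks contributing to $\partial \y$ produce $\x$, so the differential coefficient from $\y$ to $\x$ is exactly $1$.

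Granted these steps, $\x$ is a boundary in $\widehat{\CFK}(-Y,K)$, so $\widehat{t}(K) = [\x] = 0$, proving the theorem. The main obstacle, as noted above, is controlling the bigon near the endpoint $p$ when $p$ is a braid puncture rather than a point of $\partial S$: the local model of the Heegaard diagram at a braid puncture includes an extra basepoint recording the braid strand, and one must ensure that the bigon produced by the non-right-veering arc lies on the correct side of this basepoint. This should be handled by a careful local analysis of the doubled-page construction in a neighborhood of $K \cap S$, together with the observation that ``right-veering'' is defined precisely so that the canonical intersection point always sits on the side of the braid basepoint opposite to the bigon.
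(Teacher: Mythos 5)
Your high-level strategy---adapting the Honda--Kazez--Mati\'c argument by locating a generator $\y$ adjacent to the distinguished cycle $\x$ together with a basepoint-free bigon from $\y$ to $\x$---is the same one the paper uses, but the proposal has a genuine logical gap at its final step. Knowing that the bigon contributes $\x$ to $\widehat\partial\y$ with coefficient exactly $1$, and that no \emph{other} disk from $\y$ to $\x$ contributes, does not make $\x$ a boundary: you must also rule out, or otherwise dispose of, every other term of $\widehat\partial\y$. That is where the real work lies. In the paper's proof, after $\alpha_1$ and $\beta_1$ are put in efficient position on the $-S_0$ half of the Heegaard surface, positivity of domains forces any other class out of the neighboring generator $\x'$ admitting a holomorphic representative to be a bigon along $\alpha_1\cap\beta_1$ contained in $-S_0$, and efficiency forces any such bigon to cross a basepoint of $\zs_K$; hence $\partial^-\x'=\x$ or $\partial^-\x'=\x+U_z\y$ for at most one extra generator $\y$, and the extra term is killed by the $U$-action upon passing to $\HFKh$. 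Your proposal never confronts this possible extra term, and without controlling it the conclusion $\widehat t(K)=[\x]=0$ does not follow from (i) and (ii).

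Two further points. First, your local model is misplaced: in the actual construction the basis arcs are properly embedded in $S-\{p_1,\dots,p_n\}$ with endpoints on $\partial S$ only; the braid punctures become interior basepoints of the complementary disks, and no $\alpha$- or $\beta$-curve passes through them. Right-veering is accordingly tested only at $\partial S$, and the half-bigon witnessing its failure sits along the binding circle $\partial(-S_0)=-\partial(S_{1/2})$, not at a puncture. The delicate issue is therefore not which side of a braid basepoint your bigon lies on, but whether the \emph{other} bigons emanating from $\x'$ in $-S_0$ cross the $\zs_K$-basepoints. Second, you omit the case in which the non-right-veering arc $a$ cuts off a subsurface containing none of the $p_i$; such an arc cannot be completed to a basis for $(S,\{p_1,\dots,p_n\})$ (every complementary disk of a basis must contain exactly one puncture), so your step of extending $a$ to a basis fails there. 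The paper handles this by inserting a trivial braid strand $p_{n+1}$ into the puncture-free region, applying the first case to conclude that the invariant of the resulting link vanishes, and then showing via an explicit inclusion of complexes that $\widehat t$ of the original braid vanishes if and only if $\widehat t$ of the augmented braid does.
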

Our invariant also appears to be well-suited to studying transverse satellites. This is something we hope to return to in a future paper.

We prove Theorem \ref{thm:LOSS_Grid_trans} by showing that ``LOSS = BRAID" and that ``BRAID = GRID." For the first correspondence, we find, for any transverse knot $K\subset (Y,\xi)$, a single Heegaard diagram in which the same generator of $\CFKm(-Y,K)$ represents both $\Tm(K)$ and $t(K)$. For the second correspondence, we rely on a reformulation of the BRAID and GRID invariants for transverse knots in $(S^3,\xi_{std})$, as described roughly below.

If $K$ is a transverse link in $ (S^3,\xi_{std})$, then a result of Bennequin \cite{benn} states that $K$ can be braided with respect to the open book decomposition $(U,\pi)$ of $(S^3,\xi_{std})$ whose pages are disks (and whose binding $U$ is therefore the unknot).  The unknot $-U\subset -S^3$ induces an Alexander filtration on the knot Floer chain complex of $K\subset -S^3,$
\[
	\emptyset = \mscr{F}^{-U}_m \subset \mscr{F}^{-U}_{m+1} \subset \dots \subset \mscr{F}^{-U}_n=\CFKm(-S^3,K).
\]
Let \[b = \min\{j\,|\,H_*(\mscr{F}^{-U}_j) \neq 0\},\] and let $H_{t}(\mscr{F}^{-U}_b)$ denote the summand of $H_{*}(\mscr{F}^{-U}_b)$ in the top Maslov grading. In Section~\ref{sec:braid_char}, we show that $H_{t}(\mscr{F}^{-U}_b)$ has rank one and that $t(K)$ can be characterized as the image of the generator of $H_{t}(\mscr{F}^{-U}_b)$ under the map
\[
	H_*(\mscr{F}^{-U}_b) \to \HFKm(-S^3,K)
\]
induced by inclusion.\footnote{This is a slight lie to make the exposition cleaner; see Sections \ref{sec:braid_char}, \ref{sec:comb_char} and \ref{sec:braidgrid} for more accurate statements.} In Section~\ref{sec:comb_char}, we show that the GRID invariant $\theta$ admits the same formulation. The fact that the filtered quasi-isomorphism type of this filtration is an invariant of the link $K\cup -U\subset -S^3$ then implies that there is an isomorphism of knot Floer homology which identifies $t(K)$ with $\theta(K)$.

The astute reader will notice that this reformulation of $t$ is very similar to the way in which the contact invariant $c(Y,\xi)$ is defined in \cite{OS4} -- via a filtration on $\widehat{\rm{CF}}(-Y)$ induced by the connected binding of an open book supporting $(Y,\xi)$. 

Moreover, our reformulations of $t$ and $\theta$ reveal yet another interesting link between knot Floer homology and Khovanov homology. In \cite{pla1}, Plamenevskaya defines a transverse invariant in reduced Khovanov homology which associates to a transverse braid $K\subset (S^3,\xi_{std})$ a class $\psi(K)\in\widetilde{{\rm Kh}}(K)$. The braid axis $U$ of $K$ specifies an embedding of $K$ into a solid torus, which one can think of as the product of an annulus with an interval. The reduced Khovanov skein complex of $K\subset A\times I$, as defined in \cite{APS}, is the associated graded object of a filtration on the Khovanov complex for $K$. Roberts proves in \cite{lrob1} that $\psi(K)$ is characterized with respect to this filtration in the same way that $t$ is with respect to $\mscr{F}^{-U}$. His work partially inspired this aspect of our story.

\subsection*{Organization}
Section \ref{sec:preliminaries} provides background on knot Floer homology, the relationship between transverse and Legendrian links, the constructions of the GRID and LOSS invariants and the correspondence between transverse knots in contact 3-manifolds and braids with respect to open books. We define our BRAID invariant $t$ in Section \ref{sec:new_invt}. In Section \ref{sec:rv}, we prove Theorem \ref{thm:rightveering}. In Section \ref{sec:lossbraid}, we prove the equivalence of $t$ with the LOSS invariant $\Tm$. 
In Section \ref{sec:braid_char}, we show that $t$ can be reformulated in the manner described above, and, in Section \ref{sec:comb_char}, we prove that the GRID invariant $\theta$ admits the same reformulation. Finally, in Section \ref{sec:braidgrid}, we prove the equivalence of $t$ with $\theta$, completing the proof of Theorem \ref{thm:LOSS_Grid_trans}.

\subsection*{Acknowledgements} 
\label{sub:acknowledgements}
We would like to thank Vincent Colin, John Etnyre and Lenny Ng for helpful conversations.  We would also like to thank the Banff International Research Station for hosting us during the workshop ``Interactions between contact symplectic topology and gauge theory in dimensions 3 and 4."  A significant portion of this work was carried out while the authors were in attendance.


\section{Preliminaries} 
\label{sec:preliminaries}

\subsection{Knot Floer Homology} 
\label{sub:hfk}
This subsection provides a review of knot Floer homology. Our exposition is tailored to our specific needs, and therefore includes some aspects of the theory which are not usually discussed in the literature. We shall assume that the reader has some familiarity with the subject; for a more introductory treatment, see \cite{OS3,OS5}. We work with coefficients in $\F=\mathbb{Z}/2\mathbb{Z}$ throughout this paper.

A multi-pointed Heegaard diagram for an oriented link $L\subset Y$ is an ordered tuple $\mathcal{H}=(\Sigma,\alphas,\betas,\zs_L,\ws_L\cup \ws_f),$ where 
\begin{itemize}
\item $\Sigma$ is a Riemann surface of genus $g$,
\item $\alphas = \{\alpha_1,\dots,\alpha_{g+m+n-1}\}$ and $\betas = \{\beta_1,\dots,\beta_{g+m+n-1}\}$ are sets of pairwise disjoint, simple closed curves on $\Sigma$ such that $\{\alpha_1,\dots,\alpha_{g+m-1}\}$ and $\{\beta_1,\dots,\beta_{g+m-1}\}$ span $g$-dimensional subspaces of $H_1(\Sigma;\mathbb{Z})$,
\item $\zs_L$, $\ws_L$ are sets of $m$ ``linked" basepoints such that every component of the complements $\Sigma - \{\alpha_1,\dots,\alpha_{g+m-1}\}$ and $\Sigma - \{\beta_1,\dots,\beta_{g+m-1}\}$ contains exactly one point of $\zs_L$ and one of $\ws_L$, and
\item $\ws_f$ is a set of $n$ ``free" basepoints such that every component of the complements $\Sigma - \alphas$ and $\Sigma - \betas$ contains exactly one point of $\ws_L\cup\ws_f$.
\end{itemize}
We further stipulate that $Y$ is specified by the Heegaard diagram $(\Sigma,\alphas,\betas)$ and that $L$ is obtained as follows. Fix $m$ disjoint, oriented, embedded arcs in $\Sigma - \alphas$ from points in $\zs_L$ to those in $\ws_L$ and form $\gamma_1^{\alpha},\dots,\gamma_m^{\alpha}$ by pushing their interiors into the handlebody specified by $\alphas$. Similarly, define pushoffs $\gamma_1^{\beta},\dots,\gamma_m^{\beta}$ of oriented arcs in $\Sigma-\betas$ from points in $\ws_L$ to points in $\zs_L$. $L$ is the union \[L = \gamma_1^{\alpha}\cup\dots\cup\gamma_m^{\alpha}\cup\gamma_1^{\beta}\cup\dots\cup\gamma_m^{\beta}.\] Note that the Heegaard diagrams $(\Sigma,\betas,\alphas,\ws_L,\zs_L\cup \ws_f)$ and $(-\Sigma,\alphas,\betas,\zs_L,\ws_L\cup \ws_f)$ both encode the link $L\subset -Y$. We will make use of this fact later.

For each $w\in\ws_L\cup\ws_f$, let $U_w$ be a formal variable corresponding to $w$. Consider the tori $\mathbb{T}_{\alpha}  = \alpha_1\times\dots\times\alpha_{g+m+n-1}$ and $\mathbb{T}_{\beta}  = \beta_1\times\dots\times\beta_{g+m+n-1}$ in $\Sym^{g+m+n-1}(\Sigma)$. The knot Floer complex $\CFKm(\mathcal{H})$ is the free $\mathbb{F}[\{U_w\}_{w\in\ws_L\cup\ws_f}]-$module generated by the elements of $\mathbb{T}_{\alpha}\cap \mathbb{T}_{\beta}$. For $\mathbf{x}, \mathbf{y}\in \mathbb{T}_{\alpha}\cap\mathbb{T}_{\beta}$, a Whitney disk $\phi \in \pi_2(\mathbf{x},\mathbf{y})$ and a suitable path of almost complex structures on $\Sym^{g+m+n-1}(\Sigma)$, we denote by $\M(\phi)$ the moduli space of pseudo-holomorphic representatives of $\phi$. Its formal dimension is given by the Maslov index $\mu(\phi)$. Let $\widehat{\M}(\phi)$ denote the quotient of this moduli space by the translation action of $\mathbb{R}$. For a point $p\in \Sigma-\alphas-\betas$, we denote by $n_{p}(\phi)$ the algebraic intersection number of $\phi$ with $\{p\}\times\Sym^{g+m+n-2}(\Sigma)$. Similarly, for a finite subset $\mathbf{p}=\{p_1,\dots,p_k\}\subset\Sigma-\alphas-\betas$, we define $n_{\mathbf{p}}(\phi)$ to be the sum $n_{p_1}(\phi)+\dots+n_{p_k}(\phi)$.

If $b_1(Y)=0$, then the Maslov grading is an absolute $\mathbb{Q}-$grading on $\CFKm(\mathcal{H})$, specified up to an overall shift by the fact that \[M(\mathbf{x}) - M(\mathbf{y}) = \mu(\phi)-2n_{ \ws_L\cup\ws_f}(\phi)\] for $\mathbf{x}, \mathbf{y}\in \mathbb{T}_{\alpha}\cap\mathbb{T}_{\beta}$ and any $\phi\in\pi_2(\mathbf{x},\mathbf{y})$, and the fact that multiplication by any $U_w$ lowers $M$ by two.  
Suppose that $L$ is an $l-$component link, $L=L_1\cup\dots\cup L_l$. Then we can write $\zs_L = \zs_{L_1}\cup\dots\cup\zs_{L_l}$ and $\ws_L = \ws_{L_1}\cup\dots\cup\ws_{L_l}$, where the sets $\zs_{L_i}$ and $\ws_{L_i}$ specify the component $L_i$. For each $i$, the Alexander grading associated to the oriented knot $L_i$ is an absolute $\mathbb{Q}$-grading, specified up to an overall shift by the fact that \[A_{L_i}(\mathbf{x}) - A_{L_i}(\mathbf{y}) = n_{\zs_{L_i}}(\phi) -n_{\ws_{L_i}}(\phi),\] and the fact that multiplication by $U_w$ lowers $A_{L_i}$ by one for any $w\in\ws_{L_i}$ and multiplication by any other $U_w$ preserves $A_{L_i}$. We use $A(\x)$ to denote the sum $A_{L_1}(\x)+\dots+A_{L_l}(\x)$.


The differential \[\partial^-:\CFKm(\mathcal{H})\rightarrow \CFKm(\mathcal{H})\] is defined on generators by
\[\partial^-\mathbf{x} = \sum_{\mathbf{y}\in\mathbb{T}_{\alpha}\cap \mathbb{T}_{\beta}}\sum_{\substack{\phi \in \pi_2(\mathbf{x},\mathbf{y}) \\ \mu(\phi)=1 \\
n_{z}(\phi) = 0 \,\,\,\forall z\in \zs_L}} \#\widehat{\M}(\phi)\cdot \prod_{w\in\ws_L\cup\ws_f}U_w^{n_{w}(\phi)}\cdot \mathbf{y}.\] The minus version of knot Floer homology is defined to be \[\HFKm(Y,L) :=\HFKm(\HD):= H_*(\CFKm(\mathcal{H}),\partial^{-}).\] The formal variables $U_w$ and $U_{w'}$ act identically on $\HFKm(Y,L)$ when $w$ and $w'$ correspond to the same component of $L$ (i.e. are in the same $\ws_{L_i}$ for some $i$). We let $U_i$ denote the action of $U_w$ for $w\in\ws_{L_i}$. Then $\HFKm(Y,L)$ is an invariant of the link $L\subset Y$, well-defined up to graded $\mathbb{F}[U_1,\dots,U_l,\{U_w\}_{w\in\ws_f}]-$module isomorphism.

Recall that $\CFKh(\mathcal{H})$ is the chain complex obtained by setting $U_w=0$ for exactly one $w$ in each $\ws_{L_i}$. Let $\widehat\partial$ denote the induced differential, and let \[p:\CFKm(Y,L)\rightarrow \CFKh(Y,L)\] be the natural quotient map. It follows from the discussion above that the hat version of knot Floer homology, \[\HFKh(Y,L):=\HFKh(\HD):=H_*(\CFKh(\mathcal{H}),\widehat\partial),\] is independent, up to graded $\F-$module isomorphism, of which $U_w$ we set to zero in defining $\CFKh(\HD)$. A similar statement of independence holds for the induced map \[p_*:\HFKm(\HD)\rightarrow \HFKh(\HD).\]

We now focus on a slightly different chain complex, $\CFK^{-,\ws_f}(\mathcal{H})$, obtained from $\CFKm(\mathcal{H})$ by setting the $U_w=0$ for all $w\in\ws_f$. This complex and filtered versions of it will play important roles in later sections. The homology \[\HFK^{-,n}(Y,L):=\HFK^{-,n}(\HD):=H_*(\CFK^{-,\ws_f}(\mathcal{H}), \partial^-)\] depends, up to graded $\mathbb{F}[U_1,\dots,U_l]-$module isomorphism, only on the link $L\subset Y$ and the number $n$ of free basepoints. Accordingly, we will usually write $\CFK^{-,\ws_f}(\mathcal{H})$ as $\CFK^{-,n}(\mathcal{H})$.

Recall that any two multi-pointed Heegaard diagrams for $L\subset Y$ are related by a sequence of Heegaard moves in the complement of the basepoints. These moves are isotopy, handleslide, index 1/2 (de)stabilization, and two additional moves that we term \emph{linked} and \emph{free} index 0/3 (de)stabilization, following \cite{mosz}. Two such diagrams with the same numbers of free basepoints can be related using just the first four moves. There are maps associated to the first three moves which induce isomorphisms of $\HFK^{-,n}(Y,L)$.\footnote{Henceforth, when we speak of ``isomorphisms," we mean graded isomorphisms over the appropriate polynomial ring.} In the case of isotopy and handleslide, these maps can be described in terms of pseudo-holomorphic triangle counts; the map associated to index 1/2 (de)stabilization is induced by an isomorphism of complexes. Below, we study the effects of the last two moves on $\HFK^{-,n}(Y,L).$ 

Let $D$ be a region of $\Sigma-\betas$ which contains some $z\in \zs_L$ and $w\in \ws_L$. Linked index 0/3 stabilization is the operation of adding one basepoint to $\zs_L$, one to $\ws_L$, one curve to $\alphas$ and one to $\betas$ in the manner shown in Figure \ref{fig:nonfreestab}. We refer to the new $\alphas, \betas$ curves as $\alpha',\beta'$, the new basepoints as $z',w'$ and the new Heegaard diagram as $\HD'$. In addition, we denote by $x'$ and $y'$ the intersection points between $\alpha'$ and $\beta'$, as shown in the figure. Linked index 0/3 destabilization is the inverse of this operation.

Let $\CFK^{-,n}(\HD')_1$ be the quotient complex of $\CFK^{-,n}(\HD')$ generated by elements of the form $\mathbf{x}\cup \{x'\}$, and let $\CFK^{-,n}(\HD')_2$ be the subcomplex generated by the elements $\mathbf{x}\cup\{y'\}$, for $\mathbf{x}\in \mathbb{T}_{\alpha}\cap\mathbb{T}_{\beta}$. Then, $\CFK^{-,n}(\HD')$ is isomorphic to the mapping cone of \[f:\CFK^{-,n}(\HD')_1\rightarrow \CFK^{-,n}(\HD')_2,\] where $f$ is defined by \[f(\mathbf{x}\cup\{x'\}) = (U_{w'}+U_w)\cdot(\mathbf{x}\cup\{y'\}).\] It follows that the map from $\CFK^{-,n}(\mathcal{H})$ to $\CFK^{-,n}(\HD')$ which sends $\mathbf{x}$ to $\mathbf{x}\cup \{y'\}$ induces an isomorphism on homology. This is the isomorphism we associate to linked index 0/3 stabilization. To linked index 0/3 destabilization, we associate the inverse of this isomorphism.

\begin{figure}[!htbp]
\labellist 
\hair 2pt 
\small
\pinlabel $z'$ at 313 61
\pinlabel $w$ at 232 58
\pinlabel $w'$ at 349 61
\pinlabel $z$ at 383 58
\pinlabel $w$ at 7 61
\pinlabel $z$ at 82 60
\pinlabel $y'$ at 312 95
\pinlabel $x'$ at 310 26
\pinlabel $\alpha'$ at 368 85
\pinlabel $\beta'$ at 220 85

\endlabellist 
\begin{center}
\includegraphics[height = 2.6cm]{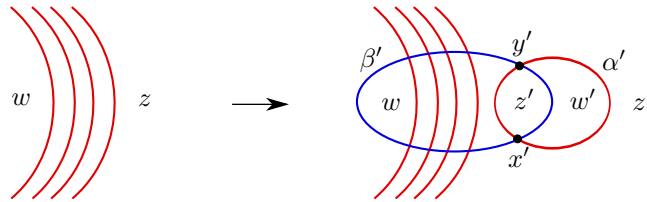}
\caption{\quad Left: before stabilization. Right: after stabilization. }
\label{fig:nonfreestab}
\end{center}
\end{figure}

Free index 0/3 stabilization is the operation of adding one free basepoint to $\ws_f$, one curve to $\alphas$ and one to $\betas$ in the configuration shown in Figure \ref{fig:freestab}. It is important that this new configuration is added in a region of $\Sigma - \alphas - \betas$ containing some element $z\in\zs_L$. As before, we refer to the new $\alphas, \betas$ curves as $\alpha',\beta'$, the new basepoint as $w'$ and the new Heegaard diagram as $\HD'$. Let $x'$ and $y'$ denote the intersection points between $\alpha'$ and $\beta'$ as shown. We say that $\alpha',\beta'$ form a \emph{small configuration} around $w'$. Free index 0/3 destabilization is the inverse of this procedure.

\begin{figure}[!htbp]
\labellist 
\hair 2pt 
\small
\pinlabel $w'$ at 147 61
\pinlabel $z$ at 250 59
\pinlabel $z$ at -30 59
\pinlabel $x'$ at 146 -5
\pinlabel $y'$ at 148 126

\pinlabel $\alpha'$ at 60 105
\pinlabel $\beta'$ at 230 105

\endlabellist 
\begin{center}
\includegraphics[height = 2.1cm]{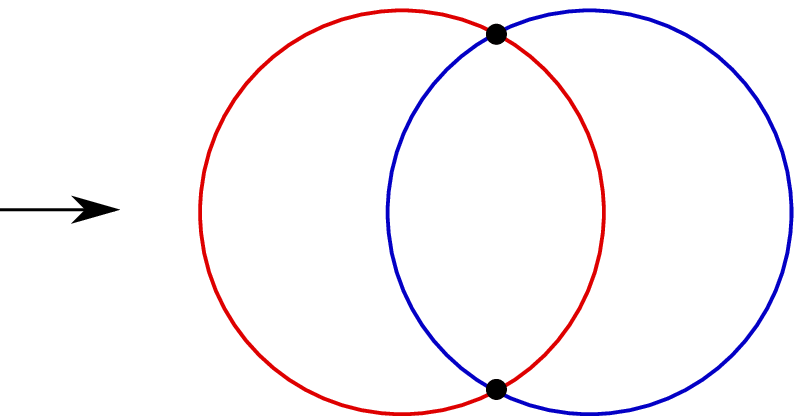}
\caption{\quad Left: before stabilization. Right: after stabilization. }
\label{fig:freestab}
\end{center}
\end{figure}

The complex $\CFK^{-,n+1}(\HD')$ splits as a direct sum of complexes, \begin{equation}\label{eqn:splitting}\CFK^{-,n+1}(\HD')=\CFK^{-,n+1}(\HD')_1\,\oplus \,\CFK^{-,n+1}(\HD')_2,\end{equation} where $\CFK^{-,n+1}(\HD')_1$ is generated by elements of the form $\mathbf{x}\cup \{x'\}$ and $\CFK^{-,n+1}(\HD')_2$ is generated by the elements $\mathbf{x}\cup\{y'\}$, for $\mathbf{x}\in \mathbb{T}_{\alpha}\cap\mathbb{T}_{\beta}$. Note that each direct summand is naturally isomorphic to $\CFK^{-,n}(\mathcal{H})$. In particular, the inclusion \[i:\CFK^{-,n}(\mathcal{H})\rightarrow \CFK^{-,n+1}(\HD'),\] which sends a generator $\mathbf{x}$ to $\mathbf{x}\cup \{x'\}$ is an isomorphism from $\CFK^{-,n}(\mathcal{H})$ to the summand $\CFK^{-,n+1}(\HD')_1[1]$, where the $[1]$ indicates that we have shifted the Maslov grading upwards by $1$. The projection \[j:\CFK^{-,n+1}(\HD')\rightarrow \CFK^{-,n}(\mathcal{H}),\] which sends a generator $\mathbf{x}\cup \{x'\}$ to $\mathbf{x}$ and all others to zero is the inverse of this isomorphism when restricted to $\CFK^{-,n+1}(\HD')_1[1]$. Since the splitting in (\ref{eqn:splitting}) is a splitting of complexes, $i$ and $j$ are chain maps and therefore induce injections and surjections, $i_*$ and $j_*$, on homology. Next, we describe the induced splitting on homology via an action associated to the free basepoint $w'$, following \cite[Section 3.3]{BL}.

For any $w\in\ws_f$, we define a map \[\Psi_w:\CFK^{-,n}(\mathcal{H})\rightarrow \CFK^{-,n}(\mathcal{H})\] which counts disks that pass through $w$ exactly once. Precisely, $\Psi_w$ is defined on generators by \[\Psi_w(\mathbf{x}) = \sum_{\mathbf{y}\in\mathbb{T}_{\alpha}\cap \mathbb{T}_{\beta}}\sum_{\substack{\phi \in \pi_2(\mathbf{x},\mathbf{y}) \\ \mu(\phi)=1 \\
n_{w}(\phi) = 1\\
n_{z}(\phi) = 0 \,\,\,\forall z\in\zs_L}} \#\widehat{\M}(\phi)\cdot \prod_{w\in\ws_L}U_w^{n_{w}(\phi)}\cdot \mathbf{y}.\] A degeneration argument shows that $\Psi_w$ is a chain map and, hence, induces a map $\psi_w$ on homology. Similarly, one can show that $\psi^2_w = 0$ and that $\psi_{w_1}\psi_{w_2} = \psi_{w_2}\psi_{w_1}$ for $w_1\neq w_2 \in \ws_f$. Further degeneration arguments involving holomorphic triangles show that $\psi_w$ commutes with the isomorphisms associated to isotopy and handleslide (c.f. \cite[Proposition 3.6]{BL}). More transparently, $\psi_w$ commutes with the maps associated to index 1/2 and linked index 0/3 (de)stabilization as well as the map associated to free index 0/3 (de)stabilization as long as $w$ is not the free basepoint being added (or removed).

Note that for the Heegaard diagram $\HD'$ obtained from $\mathcal{H}$ via free index 0/3 stabilization as above, $\CFK^{-,n+1}(\HD')_1 = \coker\Psi_{w'}$ and $\CFK^{-,n+1}(\HD')_2 = \ker\Psi_{w'}.$ Therefore, the splitting in (\ref{eqn:splitting}) gives rise to the splitting on homology, \begin{equation*}\label{eqn:splitting2}\HFK^{-,n+1}(Y,L)=\coker\psi_{w'}\,\oplus \,\ker\psi_{w'}.\end{equation*} The inclusion $i_*$ induces an isomorphism \[i_*:\HFK^{-,n}(Y,L)\rightarrow \coker \psi_{w'}[1]\] and the projection $j_*$ restricts to an isomorphism \[j_*: \coker \psi_{w'}[1]\rightarrow \HFK^{-,n}(Y,L).\] Moreover, $j_*\circ i_*$ is the identity. 

The above can be generalized as follows. Suppose that $\HD'$ is obtained from $\mathcal{H}$ via $k$ free index 0/3 stabilizations. Let $w'_1,\dots,w'_k$ denote the free basepoints added in these stabilizations. Let \[i^k:\CFK^{-,n}(\mathcal{H})\rightarrow\CFK^{-,n+k}(\HD')\] denote the obvious composition of $k$ of the inclusion maps $i$, and \[j^k:\CFK^{-,n+k}(\HD')\rightarrow\CFK^{-,n}(\mathcal{H})\] be the composition of $k$ of the projection maps $j$. Then $i^k_*$ induces an isomorphism \[i^k_*:\HFK^{-,n}(Y,L)\rightarrow \big(\cap_{i=1}^k \coker\psi_{w'_i}\big)[k].\] Likewise, $j^k_*$ restricts to an isomorphism \[j^k_*: \big(\cap_{i=1}^k \coker\psi_{w'_i}\big)[k]\rightarrow \HFK^{-,n}(Y,L)\] which is the inverse of the isomorphism above. In Section \ref{sec:braidgrid}, we use the relationships between $i^k_*,j^k_*$ and the free basepoint action together with the fact that this action commutes with the maps induced by Heegaard moves to prove Theorem \ref{thm:LOSS_Grid_trans}.



\subsection{Legendrian and Transverse Links} 
\label{sub:leg_trans}

In this short subsection, we collect a few basic notions involving Legendrian and transverse links that are used in defining the invariants discussed in later sections. As in the previous subsection, we assume a fair amount of familiarity with the subject. For a basic reference, see \cite{et2}.

Recall that an oriented link in a contact 3-manifold $(Y,\xi)$ is called \emph{Legendrian} if it is everywhere tangent to $\xi$, and \emph{transverse} if it is everywhere transverse to $\xi$ and intersects $\xi$ positively.\footnote{All contact structures in this paper are cooriented.} We say that two Legendrian (resp. transverse) links are \emph{Legendrian} (resp. \emph{transversely}) \emph{isotopic} if they are isotopic through Legendrian (resp. transverse) links.

A Legendrian link $L$ can be perturbed to a canonical (up to transverse isotopy) transverse link $K$ called the \emph{transverse pushoff} of $L$. Legendrian isotopic links give rise to transversely isotopic pushoffs. Conversely, every transverse link $K$ is transversely isotopic to the pushoff of some Legendrian link $L$; we call such an $L$ a \emph{Legendrian approximation} of $K$. 

There is a local operation on Legendrian links called \emph{negative Legendrian stabilization} which preserves transverse pushoffs;  see \cite{et2}. Said more precisely, the transverse pushoff of a Legendrian link is transversely isotopic to the pushoff of its negative stabilization. Conversely, any two Legendrian approximations of the same transverse link become Legendrian isotopic after sufficiently many negative Legendrian stabilizations \cite{EFM,EH}. It follows that the operation of transverse pushoff gives rise to a one-to-one map from the set of Legendrian links up to Legendrian isotopy and negative stabilization to the set of transverse links up to transverse isotopy. Legendrian approximation is the inverse of this map.

Suppose now that $I$ is an invariant of Legendrian links (up to Legendrian isotopy) which remains unchanged under negative Legendrian stabilization. The discussion above implies that $I$ can also be used to define an invariant $I'$ of transverse links: if $K$ is a transverse link and $L$ is a Legendrian approximation of $K$, define 
\[
	I'(K) := I(L).
\]
As we shall see, this precisely how the transverse GRID and LOSS invariants $\theta$ and $\Tm$ are defined from the Legendrian GRID and LOSS invariants $\lambda$ and $\Lm$, respectively.



\subsection{The GRID Invariants} 
\label{sub:legendrian_invariant_in_combinatorial_knot_floer_homology}
In this subsection, we describe the invariants of Legendrian and transverse links in $(S^3,\xi_{std})$ constructed by Ozsv{\'a}th, Szab{\'o} and Thurston in \cite{OST} using grid diagrams. 

Throughout, we shall think of $S^3$ as $\mathbb{R}^3\cup\{\infty\}$. Under this identification, $\xi_{std}$ restricts to the unique tight contact structure on $\mathbb{R}^3$, which we also denote by $\xi_{std}$. We consider the latter to be given by \[\xi_{std} = \ker(dz-ydx).\] There is a natural map from the set of Legendrian links in $(\mathbb{R}^3,\xi_{std})$ to the set of Legendrian links in $(S^3,\xi_{std})$. This map induces a one-to-one correspondence between Legendrian isotopy classes of Legendrian links in $(\mathbb{R}^3,\xi_{std})$ and those in $(S^3,\xi_{std})$. The analogous statements hold for transverse links as well. We shall therefore think of a Legendrian or transverse link as living in $(\mathbb{R}^3,\xi_{std})$ or $(S^3,\xi_{std})$ depending on which is most convenient. 

The construction in \cite{OST} starts with the notion of a \emph{grid diagram}. A grid diagram $G$ is a $k\times k$ square grid along with a collection of $k$ $z$'s and $k$ $w$'s contained in these squares such that every row and column contains exactly one $z$ and one $w$ and no square contains both a $z$ and a $w$. One can produce from $G$ a planar diagram for an oriented link $L\subset S^3$ by drawing a vertical segment in every column from the $z$ basepoint to the $w$ basepoint and a horizontal segment in each row from the $w$ basepoint to the $z$ basepoint so that the horizontal segments pass over the vertical segments, as in Figure \ref{fig:griddiagram}. 

\begin{figure}[!htbp]
\labellist 
\hair 2pt 
\small
\pinlabel $z$ at 12 12
\pinlabel $z$ at 35 35
\pinlabel $z$ at 59 59
\pinlabel $z$ at 105 105
\pinlabel $z$ at 151 129
\pinlabel $w$ at 176 151
\pinlabel $z$ at 129 151
\pinlabel $w$ at 129 176
\pinlabel $z$ at 82 176
\pinlabel $w$ at 151 222
\pinlabel $w$ at 105 199
\pinlabel $w$ at 59 82
\pinlabel $w$ at 35 105
\pinlabel $w$ at 12 129
\pinlabel $z$ at 199 199
\pinlabel $z$ at 223 223
\pinlabel $z$ at 246 246
\pinlabel $w$ at 246 12
\pinlabel $w$ at 223 35
\pinlabel $w$ at 199 59
\pinlabel $z$ at 176 82
\pinlabel $w$ at 82 246

\endlabellist 
\begin{center}
\includegraphics[height = 3.4cm]{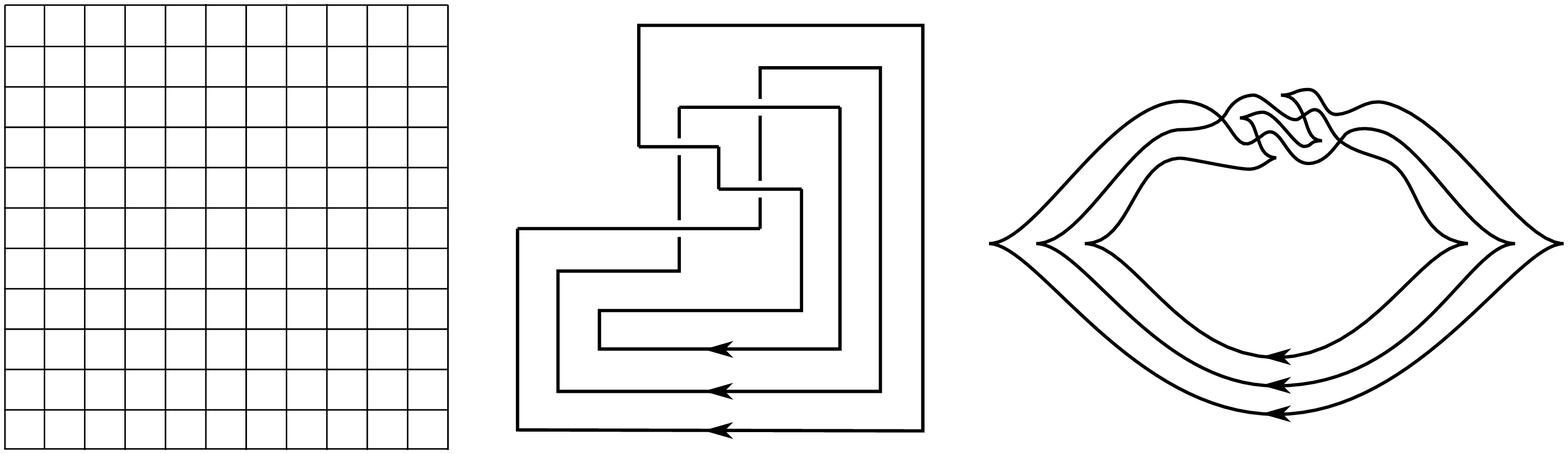}
\caption{\quad Left: the grid diagram $G$. Middle: the link $L$. Right: the Legendrian front projection of $L(G)$. }
\label{fig:griddiagram}
\end{center}
\end{figure}

By rotating this planar diagram $45^{\circ}$ clockwise, and then smoothing the upward and downward pointing corners and turning the leftward and rightward pointing corners into cusps, one obtains the front projection of an oriented Legendrian link in $(\mathbb{R}^3,\xi_{std})$, as in Figure \ref{fig:griddiagram}. Let us denote this Legendrian link by $L(G)$. As discussed above, we may think of $L(G)$ as a Legendrian link in $(S^3,\xi_{std})$. Conversely, every Legendrian link in $(S^3,\xi_{std})$ is Legendrian isotopic to $L(G)$ for some grid diagram $G$.

One associates to $G$ a multi-pointed Heegaard diagram $(T^2,\alphas,\betas,\zs,\ws)$ for $L\subset S^3$, where $T^2$ is the torus obtained by identifying the top and bottom sides of $G$ and the left and right sides of $G$. We orient $T^2$ so that its normal direction points out of the page. The vertical line segments on $G$ correspond to $k$ circles on $T^2$; we denote this set of circles by $\alphas$. Likewise, $\betas$ refers to the set of $k$ circles on $T^2$ corresponding to the horizontal lines on $G$. Finally, $\zs$ and $\ws$ are the sets of $z$ and $w$ basepoints on $T^2$, respectively.


Suppose $L$ is an oriented Legendrian link in $(S^3, \xi_{std})$. Let $G$ be a grid diagram such that $L$ is Legendrian isotopic to $L(G)$, and let $(T^2,\alphas,\betas,\zs,\ws)$ be the multi-pointed Heegaard diagram associated to $G$. Then $\HD=(-T^2,\alphas,\betas,\zs,\ws)$ is a multi-pointed Heegaard diagram for $L\subset -S^3$. Let $\x$ denote the generator of $\CFKm(-T^2,\alphas,\betas,\zs,\ws)$ consisting of the intersection points at the upper right-hand corners of the squares containing the basepoints in $\zs$. Then $\x$ is a cycle in $\CFKm(-T^2,\alphas,\betas,\zs,\ws)$, and Ozsv{\'a}th, Szab{\'o} and Thurston define the invariant\footnote{In fact, they define two invariants, $\lambda^+$ and $\lambda^-$; our $\Gm$ is their $\lambda^+$. We will not devote any attention to $\lambda^-$ as there is an isomorphism from $\HFKm(-S^3,L)$ to $\HFKm(-S^3,-L)$ which identifies $\lambda^-(L)$ with $\lambda^+(-L)$ \cite[Proposition 1.2]{OST}. } \[\Gm(L):=[\x]\in \HFKm(-S^3,L).\] Likewise, the authors in \cite{OST} define \[\Gh(L):=[\x]\in \HFKh(-S^3,L).\] 

The GRID invariants $\lambda$ and $\Gh$ behave nicely with respect to negative Legendrian stabilization:

\begin{theorem}[Ozsv\'ath-Szab\'o-Thurston \cite{OST}]
\label{thm:stab}
Suppose $L$ is a Legendrian link in $(S^3,\xi_{std})$ and $L_-$ is its negative stabilization. Then there exists an
isomorphism from $\HFKm(-S^3,L)$ to $\HFKm(-S^3,L_-)$ which sends $\lambda(L)$ to $\lambda(L_-)$. The analogous statement holds for $\Gh$.
\end{theorem}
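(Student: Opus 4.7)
\emph{Plan.} The plan is to realize negative Legendrian stabilization as an explicit grid-diagrammatic move and then track the canonical generator through the induced chain-level identification. The dictionary between Legendrian and grid stabilizations asserts that inserting a new row and column near a basepoint of $G$, in one of eight local patterns, realizes either a Legendrian isotopy, a positive Legendrian stabilization, or a negative Legendrian stabilization of $L(G)$. It therefore suffices to choose a grid stabilization $G \to G_-$ implementing $L \to L_-$ and to build, at the Heegaard--Floer level, an isomorphism $\HFKm(-S^3,L) \to \HFKm(-S^3,L_-)$ that carries $\lambda(L) = [\x]$ to the class $[\x_-]$ coming from $G_-$.

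\emph{Setup.} Let $\HD$ and $\HD_-$ be the multi-pointed Heegaard diagrams on $-T^2$ and $-T^2_-$ associated to $G$ and $G_-$. Then $\HD_-$ differs from $\HD$ by one new $\alpha$-curve $\alpha'$, one new $\beta$-curve $\beta'$, one new $z$-basepoint $z'$, and one new $w$-basepoint $w'$, with $\alpha' \cap \beta' = \{c, c'\}$; here $c$ is chosen to be the upper-right corner of the square containing $z'$. Generators of $\CFKm(\HD_-)$ partition as $\y \cup \{c\}$ or $\y \cup \{c'\}$, and by the construction of $\x_-$ as the collection of upper-right corners of the squares containing $\zs_-$, we have $\x_- = \x \cup \{c\}$, where $\x$ is the canonical cycle in $\CFKm(\HD)$ representing $\lambda(L)$.

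\emph{Main argument.} Next, I would show that $\CFKm(\HD_-)$ admits a mapping-cone decomposition adapted to this splitting, in which one summand (say the ``$c$-summand" if it is a subcomplex, or a quotient complex otherwise, depending on how $\alpha'$ and $\beta'$ sit) is chain-isomorphic, after identifying the new formal variable $U_{w'}$ with the appropriate existing $U_w$, to $\CFKm(\HD)$. Because the differential in a grid diagram counts empty rectangles, this is a combinatorial verification: any rectangle between two generators of the form $\y \cup \{c\}$ is forced to avoid the thin strip between $\alpha'$ and $\beta'$ and corresponds, up to a $U_{w'}$-weight, to a rectangle in $\HD$, while the only rectangles crossing from $c$ to $c'$ are small and give the mapping-cone map. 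Under the resulting homology isomorphism $\HFKm(-S^3,L_-) \to \HFKm(-S^3,L)$, the class $[\x \cup \{c\}]$ visibly maps to $[\x]$, so $\lambda(L_-) \mapsto \lambda(L)$ as desired.

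\emph{Main obstacle and hat case.} The technical heart is verifying the mapping-cone decomposition rigorously and showing that no stray empty rectangle near the stabilization region contributes an unwanted term to $\partial^-(\x_-)$ or to the identification map, and further that the $U$-variable identification is compatible with the full $\F[U]$-module structure (so that the correspondence is a bigraded isomorphism, not merely an ungraded one). The advantage of the grid setup is that all holomorphic-disk counts reduce to explicit enumeration of finitely many local empty rectangles, making the check finite if somewhat tedious. The hat statement then follows by setting one $U$-variable per link component to zero in a way compatible with the above identification, exploiting that the chain map preserves the $U$-action.
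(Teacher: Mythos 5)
This statement is quoted in the paper from \cite{OST} without proof, so there is no in-paper argument to compare against; your overall strategy --- realize negative Legendrian stabilization as a grid stabilization, establish a chain-level identification of the stabilized grid complex with (a mapping cone built from) the original one, and track the canonical generator --- is in outline the strategy of the original Ozsv\'ath--Szab\'o--Thurston proof, and the dictionary you invoke between the eight local grid stabilization patterns and Legendrian isotopy/positive stabilization/negative stabilization is correct.

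There is, however, a concrete error in your setup that propagates through the main argument. On the grid torus every horizontal circle meets every vertical circle in exactly one point, so the new curves satisfy $|\alpha'\cap\beta'|=1$, not $\alpha'\cap\beta'=\{c,c'\}$; moreover $\alpha'$ and $\beta'$ each meet all of the old curves as well. Consequently the generators of $\CFKm(\HD_-)$ do \emph{not} partition into two copies $\y\cup\{c\}$ and $\y\cup\{c'\}$ of the generators of $\CFKm(\HD)$: only the generators containing the single new intersection point $c$ are in bijection with generators of $G$, while the complementary set $\mathbf{N}$ (generators in which $\alpha'$ and $\beta'$ are occupied at intersection points with old curves) is much larger. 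You appear to have imported the local picture of a \emph{linked index $0/3$ stabilization} (Figure~\ref{fig:nonfreestab} of this paper), where the two new curves meet only each other and do so in two points; a grid stabilization is a genuinely different move. The correct argument identifies $\CFKm(\HD_-)$ up to quasi-isomorphism with the mapping cone of $U_i-U_j$ acting between two copies of $\CFKm(\HD)$, but the identification of $\mathbf{N}$ with one of those copies is a nontrivial chain homotopy equivalence given by counting explicit destabilization domains, not a relabelling of generators. Once that machinery is in place, the tracking step is as easy as you claim --- $\x_-=\x\cup\{c\}$ lies in the piece carried isomorphically to $\CFKm(\HD)$ and maps to $\x$ --- so the gap is in the justification of the decomposition, not in the final step. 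You should also verify that for the specific stabilization type realizing \emph{negative} Legendrian stabilization the canonical generator really does contain $c$ rather than lying in $\mathbf{N}$, since the two cases require different treatments.
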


This invariance under negative Legendrian stabilization allows Ozsv{\'a}th, Szab{\'o} and Thurston to define invariants $\theta$ and $\widehat\theta$ of transverse links as suggested in the previous subsection. Namely, if $K$ is a transverse link in $(S^3,\xi_{std})$ and $L$ is a Legendrian approximation of $K$, they define \[\theta(K):=\lambda(L)\in\HFKm(-S^3,K)\] and \[\widehat\theta(K):=\Gh(L)\in\HFKh(-S^3,K).\] Moreover, they show that the Alexander and Maslov gradings of $\theta(K)$ are given by \[A(\theta(K)) = \frac{sl(K)+1}{2}\] and \[ M(\theta(K)) = sl(K)+1.\] 
Here, $sl(K)$ is the \emph{self-linking number} of the transverse link $K$. See \cite{et2} for details.


\subsection{The LOSS Invariants} 
\label{sub:loss_invariant}

In this subsection, we describe the invariants of Legendrian and transverse knots in arbitrary contact 3-manifolds constructed by Lisca, Ozsv\'ath, Stipsicz and Szab\'o in \cite{LOSS}. 

Recall that an open book decomposition of a 3-manifold $Y$ is a pair $(B,\pi)$ consisting of an oriented (fibered) link $B\subset Y$ and a locally trivial fibration $\pi:Y-B\rightarrow S^1=\mathbb{R}/\mathbb{Z}$ whose fibers are interiors of Seifert surfaces for the \emph{binding} $B$. The closures of these fibers are called \emph{pages}. It is convenient to record this fibration in the form of an abstract open book $(S,\varphi)$, where $S$ is a compact surface with boundary homeomorphic to a page, and $\varphi$ is a boundary-fixing diffeomorphism of $S$ which encodes the monodromy of the fibration. More precisely, we identify $Y$ with $(S\times [0,1])/\sim_{\varphi},$ where $\sim_{\varphi}$ is the relation defined by 
\begin{align*}
(x,1)\sim_{\varphi}(\varphi(x),0)&, \,\,\,\,\,\, x\in S\\
(x,t)\sim_{\varphi} (x,s)&, \,\,\,\,\,\,x\in\partial S, \,\,\,t,s\in[0,1].
\end{align*} 
Under this identification, $B$ is given by $\partial S \times\{t\}$, and $\pi$ is the map which sends $(x,t)$ to $t$. We denote the page $S\times\{t\}$ by $S_t$. Recall that a \emph{positive stabilization} of $(B,\pi)$ is an open book decomposition corresponding to a fibered link obtained by plumbing a positive Hopf band to $B$. Abstractly, a positive stabilization of $(S,\varphi)$ is an open book $(S',D_{\gamma}\circ\varphi)$, where $S'$ is obtained from $S$ by attaching a 1-handle and $D_{\gamma}$ is the right-handed Dehn twist around a curve $\gamma\subset S'$ which passes through this 1-handle exactly once.

An open book decomposition is said to be \emph{compatible} with a contact structure $(Y,\xi)$ if $\xi = \ker \alpha$ for some 1-form $\alpha\in \Omega^1(Y)$ such that $d\alpha>0$ on the pages of the open book and $\alpha>0$ on its binding. Giroux proved that the map which sends an open book decomposition to a compatible contact structure gives rise to a one-to-one correspondence from set of open book decompositions of $Y$ up to positive stabilization to the set of contact structures on $Y$ up to isotopy \cite{Gi}.

With this background out of the way, we may now define the LOSS invariants. Suppose $L$ is a Legendrian knot in $(Y,\xi)$. Then there exists an open book decomposition $(B,\pi)$ compatible with $(Y,\xi)$ such that $L$ sits as a homologically essential curve on some page $\overline{\pi^{-1}(t)}$. Let $(S,\varphi)$ be an abstract open book corresponding to $(B,\pi)$ in the manner described above. We can think of $L$ as sitting on the page $S_{1/2}$. 

A \emph{basis} for $S$ \emph{adapted} to $L$ is a set of properly embedded arcs $\{a_1,\dots,a_k\}$ in $S$ whose complement is a disk such that $L$ only intersects $a_1$, and does so transversely in a single point.\footnote{Here, we are thinking of $L$ as lying on $S$ via the canonical identification of $S$ with $S_{1/2}$.} Let $\{b_1,\dots,b_k\}$ be another such basis, where each $b_i$ is obtained from $a_i$ by shifting the endpoints of $a_i$ slightly in the direction of the orientation on $\partial S$ and isotoping to ensure that there is a single transverse intersection between $b_i$ and $a_i$, as shown in Figure~\ref{fig:loss}.

\begin{figure}[!htbp]
\labellist 
\hair 2pt 
\small
\pinlabel $S_{1/2}$ at 124 14
\pinlabel $L$ at -7 34

\pinlabel $\alpha_1$ at 98 59
\pinlabel $\beta_1$ at 50 59
\pinlabel $z$ at 73 12
\pinlabel $w$ at 35 41
\pinlabel $z$ at 247 56
\pinlabel $w$ at 208 41

\endlabellist 
\begin{center}
\includegraphics[height = 2cm]{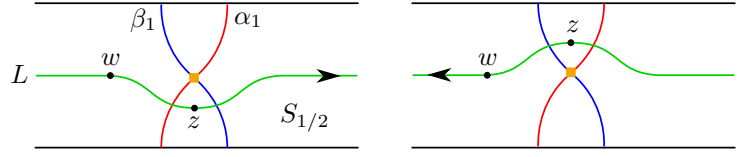}
\caption{\quad Part of the $S_{1/2}$ portion of $\Sigma$ near the intersection of $L$ with $\alpha_1$. Which region contains $z$ depends on the orientation of $L$ as indicated. The orange box represents the component of $\x$ on $\alpha_1\cap\beta_1$.}
\label{fig:loss}
\end{center}
\end{figure}

Let $\Sigma$ denote the closed surface $S_{1/2}\cup-S_0$. For $i=1,\dots,k$, let $\alpha_i=a_i\times\{1/2\}\cup a_i\times\{0\}$ and $\beta_i=b_i\times\{1/2\} \cup \varphi(b_i)\times\{0\}$. Let $w$ be a point on $S_{1/2}$ outside of the thin strips between the $\alpha_i$ and $\beta_i$ curves, and let $z$ be a point on $S_{1/2}$ in one of the two regions between $\alpha_1$ and $\beta_1$, depending on the orientation of $L$, as shown in Figure \ref{fig:loss}. Let $\alphas = \{\alpha_1,\dots,\alpha_k\}$, $\betas = \{\beta_1,\dots,\beta_k\}$, $\zs_L=\{z\}$ and $\ws_L = \{w\}.$ Then $(\Sigma, \alphas,\betas,\zs_L,\ws_L)$ is a doubly-pointed Heegaard diagram for $L\subset Y$, from which it follows that $(\Sigma, \betas,\alphas,\ws_L,\zs_L)$ is a doubly-pointed Heegaard diagram for $L\subset -Y$. Let $\x$ denote the generator of $\CFKm(\Sigma, \betas,\alphas,\ws_L,\zs_L)$ consisting of the intersection points on $S_{1/2}$ between the $\alpha_i$ and $\beta_i$ curves. Then $\x$ is a cycle in $\CFKm(\Sigma, \betas,\alphas,\ws_L,\zs_L)$, and Lisca, Ozsv{\'a}th, Stipsicz and Szab{\'o} define the invariant \[\Lm(L):=[\x]\in\HFKm(-Y,L).\] Likewise, they define \[\Lh(L):=[\x]\in\HFKh(-Y,L).\]

As was the case for the GRID invariants, the LOSS invariants $\Lm$ and $\Lh$ behave nicely with respect to negative Legendrian stabilization:

\begin{theorem}[Lisca-Ozsv\'ath-Stipsicz-Szab\'o \cite{LOSS}]\label{thm:loss_neg}
Suppose $L$ is a Legendrian knot in $(Y,\xi)$ and $L_-$ is its negative stabilization. Then there exists an
isomorphism from $\HFKm(-Y,L)$ to $\HFKm(-Y,L_-)$ which sends $\Lm(L)$ to $\Lm(L_-)$. The analogous statement holds for $\Lh$.
\end{theorem}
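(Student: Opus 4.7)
My plan is to reduce the invariance under negative Legendrian stabilization to a local calculation by realizing $L_-$ geometrically as a knot on a page of a positively stabilized open book, and then comparing the LOSS Heegaard diagrams for $L$ and $L_-$ explicitly.

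First I would arrange the geometry. Choose an abstract open book $(S,\varphi)$ compatible with $(Y,\xi)$ in which $L$ lies on the page $S_{1/2}$, fix a basis $\{a_1,\dots,a_k\}$ of $S$ adapted to $L$, and build the doubly-pointed Heegaard diagram $\mathcal H$ with its distinguished cycle $\mathbf x$ representing $\Lm(L)$. A local contact-topology computation (comparing contact framing to page framing near $L$) shows that performing a single positive open book stabilization along an arc $c$ that crosses $L$ once inside the strip where $a_1 \cap L$ lies, and then pushing $L$ through the new $1$-handle on the appropriate side, realizes a Legendrian knot Legendrian isotopic to $L_-$ on a page of the stabilized open book $(S', D_{\gamma}\circ \varphi)$. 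The choice of side is what distinguishes negative stabilization from positive; on the other side the same construction would produce $L_+$.

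Next I would extend the basis to $\{a_1,\dots,a_k, a_{k+1}\}$ adapted to $L_-$ by taking $a_{k+1}$ to be the cocore of the new $1$-handle, and form the corresponding Heegaard diagram $\mathcal H'$ with cycle $\mathbf x'$ representing $\Lm(L_-)$. By construction $L_-$ meets only $a_1$, and $a_{k+1}$ is disjoint from $L_-$. Outside a neighborhood of the new handle, $\mathcal H'$ agrees with $\mathcal H$ and the old curves $\alpha_i,\beta_i$ ($i\le k$) are essentially unchanged. The new curves $\alpha_{k+1}$ and $\beta_{k+1}$ meet in exactly two points: one point $x_{k+1}$ on the $S'_{1/2}$ side (which contributes the extra component in $\mathbf x' = \mathbf x \cup \{x_{k+1}\}$) and one point $y_{k+1}$ on the $-S'_0$ side created by the extra Dehn twist $D_\gamma$ in $\varphi'$.

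The crux, and the main obstacle, is to convert this geometric picture into a chain-level identification. I would place the basepoints $z,w$ inside the strip near $a_1$, as in the LOSS construction, so that neither lies in the handle region. A neck-stretching argument along a separating curve enclosing the new handle should then show that any pseudo-holomorphic disk in $\mathcal H'$ emanating from $\mathbf x'$ with $n_z=0$ either (i) is disjoint from the handle region and comes from a disk in $\mathcal H$ with $\mathbf x$ as one corner, or (ii) is forced to cover the small bigon between $x_{k+1}$ and $y_{k+1}$ and, by the placement of $z$ and $w$, must cross a basepoint, contradicting the allowed multiplicities. Consequently the map $\mathbf y \mapsto \mathbf y \cup \{x_{k+1}\}$ extends to a chain-level embedding $\CFKm(\mathcal H) \hookrightarrow \CFKm(\mathcal H')$ realizing a quasi-isomorphism after cancelling the acyclic summand generated by $y_{k+1}$; this map sends $[\mathbf x]$ to $[\mathbf x']$, yielding the desired isomorphism from $\HFKm(-Y,L)$ to $\HFKm(-Y,L_-)$. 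The hardest technical points are the holomorphic-curve degeneration analysis that isolates the local model and the careful verification that the chosen stabilization arc and "side" really produces $L_-$ rather than $L_+$; the statement for $\Lh$ follows by setting $U_w = 0$ for the appropriate basepoint in the same argument.
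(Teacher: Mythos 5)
This theorem is stated in the paper with an attribution to \cite{LOSS} and no proof is given there, so the relevant comparison is with the original argument of Lisca--Ozsv\'ath--Stipsicz--Szab\'o. Your overall strategy --- realize $L_-$ on a page of a positively stabilized open book and compare the two LOSS diagrams locally near the new handle --- is indeed the strategy of that proof. But your chain-level step has a genuine gap: as written, the holomorphic-disk analysis never uses which of the two regions adjacent to the new configuration contains the basepoint $z$, i.e.\ it never distinguishes negative from positive stabilization. It therefore proves too much: the same words would ``show'' that $\Lm(L_+)$ corresponds to $\Lm(L)$, which is false --- under the analogous identification $\Lm(L_+)$ corresponds to $U\cdot\Lm(L)$ (consistent with the grading formulas $A=(sl+1)/2$, $M=sl+1$, since $sl$ drops by $2$ under positive stabilization), so in particular $\Lh(L_+)$ can vanish while $\Lh(L)$ does not. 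The asymmetry has to enter precisely where you assert that a disk touching the handle region ``must cross a basepoint, contradicting the allowed multiplicities'': for $\partial^-$ only $z$-basepoints are forbidden, while $w$-basepoints merely contribute powers of $U$. For one choice of side the small domain joining $\mathbf{x}\cup\{x_{k+1}\}$ to $\mathbf{x}\cup\{y_{k+1}\}$ meets $z$ and is killed; for the other it meets only $w$ and contributes a $U$. Until you exhibit these domains and verify which placement of $z$ is the negative stabilization, the key step is missing; relatedly, $y_{k+1}$ does not in general span an acyclic summand --- the correct local model is a mapping cone of multiplication by a sum of $U$-variables, exactly as in the paper's treatment of linked index $0/3$ stabilization in Section 2.1.

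There is also an internal inconsistency in your setup: you take $a_{k+1}$ to be the cocore of the new $1$-handle and simultaneously claim that $L_-$, obtained by pushing $L$ through that handle, is disjoint from $a_{k+1}$. A curve running once through the handle meets the cocore once, so the basis $\{a_1,\dots,a_{k+1}\}$ as described is not adapted to $L_-$, and the generator $\mathbf{x}'=\mathbf{x}\cup\{x_{k+1}\}$ with $z,w$ placed ``in the strip near $a_1$'' is not the LOSS cycle for $L_-$. In the actual argument one must modify the basis (or the stabilization arc) so that the stabilized knot again meets exactly one basis arc exactly once, and the position of $z$ relative to the two intersection points of the new $\alpha$--$\beta$ pair is exactly what encodes the sign of the stabilization. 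Finally, neck-stretching is unnecessary machinery here: positivity of domains together with the basepoint placement suffices, as in the elementary domain arguments used throughout Sections 2 and 3 of this paper.
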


Using Theorem \ref{thm:stab}, Lisca, Ozsv\'ath, Stipsicz, Szab\'o define invariants $\Tm$ and $\Th$ of transverse knots as in the previous subsection. Namely, if $K$ is a transverse knot in $(Y,\xi)$ and $L$ is a Legendrian approximation of $K$, they define \[\Tm(K):=\Lm(L)\in\HFKm(-Y,K)\] and \[\Th(K):=\Lh(L)\in\HFKh(-Y,K).\] For a transverse knot $K\subset (S^3,\xi_{std})$, they also prove that the Alexander and Maslov gradings of $\Tm(K)$ are given by \[A(\Tm(K)) = \frac{sl(K)+1}{2}\] and \[ M(\Tm(K)) = sl(K)+1.\] In particular, the gradings of $\Tm$ agree with those of $\Gm$ where both invariants are defined.

\subsection{Transverse Braids and Open Books} 
\label{sub:transverse_braids}

In this subsection, we discuss the relationship between transverse knots and braids in open books. We will use this relationship in Section \ref{sec:new_invt} to define our BRAID invariant $t$.

Suppose $(B,\pi)$ is an open book compatible with the contact structure $(Y,\xi)$. A transverse link $K$ in $(Y,\xi)$ is said to be a \emph{braid with respect to} $(B,\pi)$ if $K$ is positively transverse to the pages of $(B,\pi)$. Two such braids are said to be \emph{transversely isotopic with respect to} $(B,\pi)$ if they are transversely isotopic through links which are braided with respect to $(B,\pi)$. The following result of Pavelescu is a generalization of a theorem of Bennequin \cite{benn}.

\begin{theorem}[Pavelescu \cite{EPthesis}]\label{thm:EPput}
Suppose $(B,\pi)$ is an open book compatible with $(Y,\xi)$. Then every transverse link in $(Y,\xi)$ is transversely isotopic to a braid with respect to $(B,\pi)$.
\end{theorem}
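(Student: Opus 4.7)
Proof plan: The strategy is to mimic the classical Alexander braiding algorithm, as adapted by Bennequin to the contact category for the unknot open book of $(S^3,\xi_{std})$. The essential point is that any open book compatible with $(Y,\xi)$ admits \emph{standard neighborhoods} of the binding on which the contact form has a normal form that permits local transverse braiding moves.

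First, I would arrange that $K$ is disjoint from $B$. A neighborhood of any binding component $B_i$ can be identified with a solid torus $D^2\times S^1$ in which a compatible contact form takes the model shape $\alpha=dz+r^2\,d\theta$, the pages restrict to half-disks $\{\theta=\text{const}\}$, and $B_i$ is the $z$-axis. Since transverse arcs can be $C^0$-perturbed in this model off the binding while remaining transverse (the transversality condition $\alpha(K')>0$ is $C^1$-open), any intersection point of $K$ with $B$ can be removed by a small transverse isotopy.

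Second, with $K\subset Y\setminus B\cong (S\times[0,1])/\sim_\varphi$, consider the smooth map $\pi|_K\colon K\to S^1$. Since $d\alpha>0$ on pages, the Reeb vector field $R$ of $\alpha$ is positively transverse to each page, so $K$ is a braid with respect to $(B,\pi)$ if and only if $d(\pi|_K)>0$ everywhere. After a small transverse perturbation we may assume the critical points of $\pi|_K$ are nondegenerate, so $K$ decomposes into finitely many maximal \emph{good arcs} (on which $d(\pi|_K)>0$) and \emph{bad arcs} (on which $d(\pi|_K)\le 0$). The goal is to replace every bad arc by a good arc via a transverse isotopy.

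Third, fix a bad arc $\gamma\subset K$, and choose a page $S_t$ together with a properly embedded arc $\delta\subset S_t$ whose interior is disjoint from $K$, whose endpoints lie on $\gamma$ near its two endpoints, and whose closure meets one binding component $B_i$ of $B$ at a single boundary point. One can find such a $\delta$ because the pages of the open book foliate $Y\setminus B$ and $\partial S_t=B$. Using the standard neighborhood model near $B_i$, perform a finger move that drags $\gamma$ across $\delta$ and once around $B_i$ in the direction of increasing $\theta$. In the model coordinates this finger move can be written down explicitly as a $C^0$-small isotopy whose tangent vector has everywhere positive $\alpha$-value (so it remains transverse) and positive $d\theta$-value along the new arc (so the new arc is a good arc). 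The number of bad arcs strictly decreases under this move, so finitely many iterations produce a braid transversely isotopic to the original $K$.

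The main obstacle is verifying the third step: one must show the finger move across $\delta$ can be carried out while maintaining positivity of $\alpha$ along the tangent vector throughout the isotopy, not just at the endpoints. This reduces to a local computation in the standard neighborhood of $B_i$, where the conditions $d\alpha>0$ on pages and $\alpha(\partial_\theta)>0$ near $B_i$ (both forced by compatibility of the open book with $\xi$) combine to make the move transverse. Once this local statement is established, the global result follows from a straightforward induction on the number of bad arcs.
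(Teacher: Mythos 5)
The paper offers no proof of this statement: it is imported wholesale from Pavelescu's thesis \cite{EPthesis}, so there is no internal argument to compare against. Your overall plan --- push $K$ off the binding, perturb so that $\pi|_K$ is Morse, and inductively destroy the ``bad'' arcs --- is the Alexander--Bennequin strategy that the cited proof also follows, so the architecture is the right one. Steps one and two are fine.

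The gap is in your third step, which is where essentially all of the content of the theorem lives and which you yourself flag as unverified; moreover, the sketch you give of the move is geometrically off in several respects. (a) The two endpoints of a bad arc $\gamma$ are critical points of $\pi|_K$ lying on \emph{different} pages, so there is no properly embedded arc $\delta$ in a single page $S_t$ whose endpoints lie on $\gamma$ near its two endpoints. (b) The replacement of a bad arc by a good one is not a $C^0$-small isotopy: a good arc with the same endpoints must sweep positively through the complementary interval of pages, i.e.\ it must travel once around the $S^1$-direction of the mapping torus $Y\setminus B$, typically by passing close to the binding. (c) The move is therefore not local to a standard neighborhood of $B_i$: the bad arc may lie far from $B$, and one must first transversely drag it into such a neighborhood. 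This dragging is exactly the delicate point, and openness of the transversality condition does not dispose of it --- $C^0$-small smooth isotopies of transverse links need not be realizable as transverse isotopies (transverse stabilization is a $C^0$-small smooth isotopy that changes $sl$ by $-2$). One needs an explicit family of embedded links, transverse to $\xi$ throughout, interpolating between the old and new arcs, chosen so that no new page-tangencies appear along the finger and so that embeddedness relative to the rest of $K$ is preserved; this is what forces the subdivision into small arcs and the construction of explicit braiding disks in Bennequin's and Pavelescu's arguments, and it is also what makes your claim that the number of bad arcs strictly decreases non-automatic. (A smaller quibble: the form $dz+r^2\,d\theta$ does not satisfy $d\alpha>0$ on the pages $\{\theta=\mathrm{const}\}$, so it is a model for the contact structure near the binding, not for a compatible contact form.) As written, the proposal reduces the theorem to an unproven local statement that, on inspection, is not actually local.
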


If $K$ is a braid with respect to $(B,\pi)$, then $K$ intersects every page of $(B,\pi)$ in the same number of points, say $n$. In this case, we refer to $K$ as an $n$-braid. Let $B_0$ be a binding component of $(B,\pi)$. In \cite{EPthesis}, Pavelescu defines an operation called \emph{positive Markov stabilization around} $B_0$, which is a generalization of the standard positive Markov stabilization for braids in $S^3$. This operation replaces $K$ with an $(n+1)$-braid $K^+$ with respect to $(B,\pi)$ which is transversely isotopic to $K$. See Figure \ref{fig:markovstab} for an illustration. In addition, Pavelescu proves the following generalization of Wrinkle's Transverse Markov Theorem \cite{wrinkle} (see also \cite{osh}).

\begin{theorem}[Pavelescu \cite{EPthesis}]
Suppose $K_1$ and $K_2$ are braids with respect to an open book $(B,\pi)$ compatible with $(Y,\xi)$. Then $K_1$ and $K_2$ are transversely isotopic if and only if there exist positive Markov stabilizations $K_1^+$ and $K_2^+$ around the binding components of $(B,\pi)$ such that $K_1^+$ and $K_2^+$ are transversely isotopic with respect to $(B,\pi)$.
 \end{theorem}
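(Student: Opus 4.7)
My plan would be to generalize the Bennequin--Wrinkle--Orevkov--Shevchishin proof of the Transverse Markov Theorem (for the disk open book of $(S^3,\xi_{std})$) to an arbitrary open book $(B,\pi)$. The ``if'' direction is the easy one: a positive Markov stabilization around a binding component $B_0$ is realized by an explicit local transverse isotopy that attaches a short arc wrapping once around $B_0$ in a standard contact neighborhood of the binding, so transverse isotopy of $K_1^+$ and $K_2^+$ with respect to $(B,\pi)$ immediately implies transverse isotopy of $K_1$ and $K_2$ in $(Y,\xi)$.

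For the ``only if'' direction, I would start with a smooth transverse isotopy $\{K_t\}_{t\in[0,1]}$ from $K_1$ to $K_2$ and consider its trace as a smoothly embedded annulus $A\subset Y\times[0,1]$ whose slices are positively transverse to $\xi$. After a small perturbation, one may arrange that $K_t$ is a braid with respect to $(B,\pi)$ for all but finitely many $t$, and that at each exceptional time $K_t$ has a single isolated tangency with exactly one page $\overline{\pi^{-1}(\theta)}$. I would then pull the product foliation $\{\overline{\pi^{-1}(\theta)}\times\{t\}\}$ back to a singular foliation on $A$ and, following the braid-foliation machinery of Birman--Menasco, read off from each singular moment a local combinatorial move relating the braids on either side of it.

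The crucial input -- and the reason every stabilization that appears is positive -- is a positivity-of-intersections argument: positive transversality of each $K_t$ to $\xi$ together with Giroux compatibility of $(B,\pi)$ with $\xi$ (so that $d\alpha>0$ on pages) forces every tangency of the trace with a page to have a definite positive sign. Consequently each singular moment corresponds either to a braid isotopy with respect to $(B,\pi)$ or to a \emph{positive} Markov (de)stabilization around some binding component. The main obstacle I anticipate is eliminating the destabilization events: the plan is to preemptively positively stabilize both $K_1$ and $K_2$ sufficiently many times around each binding component so that every destabilization in the trace can be paired with an earlier stabilization and absorbed, leaving behind only braid isotopies with respect to $(B,\pi)$. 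Carrying this out requires generalizing the disk-swap and exchange-move techniques of Birman--Menasco from a disk page to a general page, which I expect to be the technical heart of the argument.
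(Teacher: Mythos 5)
The paper does not prove this theorem at all: it is quoted verbatim from Pavelescu's thesis \cite{EPthesis} as an external input (a generalization of Wrinkle's Transverse Markov Theorem, cf.\ \cite{wrinkle,osh}), so there is no in-paper argument to compare yours against. Judged on its own terms, your outline follows the broad strategy of the known proofs (Orevkov--Shevchishin/Wrinkle in $S^3$, and Pavelescu's adaptation to general open books): the ``if'' direction is indeed immediate from the fact that a positive Markov stabilization is by construction realized by a transverse isotopy, and the ``only if'' direction does proceed by analyzing a generic one-parameter family of transverse links and the finitely many times at which braiding fails. However, your central claim --- that positive transversality of each $K_t$ to $\xi$ together with $d\alpha>0$ on the pages ``forces every tangency of the trace with a page to have a definite positive sign'' --- is asserted rather than proved, and as formulated it does not follow. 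A tangency of $K_{t_0}$ with a page is a condition on the tangent vector of the knot relative to the fibration $\pi$, while transversality to $\xi$ is a condition relative to the contact planes; these are independent constraints in general, and $d\alpha>0$ on pages says nothing about the sign of such a degeneration of the trace annulus. The actual positivity mechanism in these proofs comes from a local model analysis near the binding (where compatibility does align $\xi$ with the pages) and from the invariance of the self-linking number, not from a pointwise positivity-of-intersections statement; identifying the correct local statement is precisely where the work lies.

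The second genuine gap is the one you yourself flag: eliminating destabilization events. A self-linking-number count rules out a net excess of negative stabilizations but cannot rule out a negative stabilization cancelled later by a negative destabilization within the same isotopy, so the ``pre-stabilize and absorb'' step requires an honest generalization of the Birman--Menasco exchange-move and disk-swap machinery from disk pages to pages of arbitrary genus and arbitrary numbers of binding components. That generalization is the technical heart of Pavelescu's thesis and is not supplied here. In short: the architecture of your proposal is the right one, but the two steps that make the theorem true --- positivity of the singular events and removability of destabilizations --- are exactly the steps left unproved.
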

 
\begin{figure}[!htbp]
\labellist 
\hair 2pt 
\small
\pinlabel $B_0$ at 43 90
\pinlabel $B_0$ at 152 90
\pinlabel $K$ at 7 34
\pinlabel $K^+$ at 118 33

\endlabellist 
\begin{center}
\includegraphics[height = 3cm]{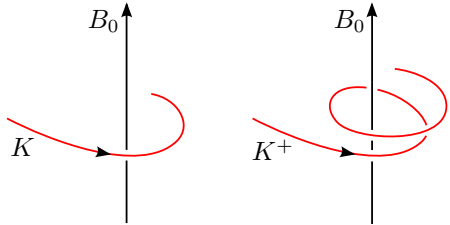}
\caption{\quad On the left, $K$ near the binding component $B_0$. On the right, the positive stabilization $K^+$. }
\label{fig:markovstab}
\end{center}
\end{figure}
 
 The following is an immediate corollary of Pavelescu's work.
 
 \begin{corollary}
 \label{cor:epcor}
Suppose $K_1$ and $K_2$ are braids with respect to open books $(B_1,\pi_2)$ and $(B_2,\pi_2)$ compatible with $(Y,\xi)$. Let $(B,\pi)$ be any common positive stabilization of $(B_1,\pi_1)$ and $(B_2,\pi_2)$, and let $K_1'$ and $K_2'$ denote the induced braids with respect to $(B,\pi)$. Then $K_1$ and $K_2$ are transversely isotopic if and only if there exist positive Markov stabilizations $(K_1')^+$ and $(K_2')^+$ around the binding components of $(B,\pi)$ such that $(K_1')^+$ and $(K_2')^+$ are transversely isotopic with respect to $(B,\pi)$. \qed

 \end{corollary}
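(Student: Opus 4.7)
The plan is to reduce the statement directly to Pavelescu's Transverse Markov Theorem applied to the single open book $(B,\pi)$. The key observation that makes this a ``corollary'' is that the passage from a braid $K_i$ with respect to $(B_i,\pi_i)$ to its induced braid $K_i'$ with respect to a positive stabilization $(B,\pi)$ should itself be realized through a transverse isotopy in $(Y,\xi)$. Once this is established, both directions of the biconditional follow by stringing together Pavelescu's theorem with this fact.

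First I would make precise how $K_i'$ is obtained from $K_i$. Recall that a positive stabilization of $(B_i,\pi_i)$ replaces a page $S$ by $S' = S \cup (\text{1-handle})$ and composes the monodromy with a right-handed Dehn twist $D_\gamma$ along a curve $\gamma$ passing once over the new handle. By Giroux's theorem this yields an open book supporting the same contact structure $(Y,\xi)$, and the stabilization can be performed inside an arbitrarily small standard neighborhood of a properly embedded arc on the page, in particular in the complement of $K_i$. Thus $K_i$ remains transverse to $\xi$, and after a $C^0$-small transverse perturbation (away from the neighborhood of the added handle, using positivity of $d\alpha$ on pages for a compatible contact form $\alpha$) we obtain a braid $K_i'$ with respect to $(B,\pi)$ that is transversely isotopic to $K_i$ in $(Y,\xi)$. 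Iterating this for each stabilization in the sequence relating $(B_i,\pi_i)$ to $(B,\pi)$ produces the desired $K_i'$.

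With the lemma $K_i \simeq_t K_i'$ in hand, the biconditional is immediate. For the forward direction, if $K_1$ and $K_2$ are transversely isotopic in $(Y,\xi)$ then so are $K_1'$ and $K_2'$, and Pavelescu's Transverse Markov Theorem applied to these two braids with respect to the common open book $(B,\pi)$ produces positive Markov stabilizations $(K_1')^+$ and $(K_2')^+$ around binding components of $(B,\pi)$ that are transversely isotopic with respect to $(B,\pi)$. For the reverse direction, transverse isotopy with respect to $(B,\pi)$ is in particular a transverse isotopy in $(Y,\xi)$, and positive Markov stabilization around a binding component is a transverse isotopy (this is part of Pavelescu's construction), so $(K_1')^+ \simeq_t (K_2')^+$ yields $K_1' \simeq_t K_2'$, and combining with $K_i \simeq_t K_i'$ gives $K_1 \simeq_t K_2$.

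The main technical point I expect to have to justify carefully is the perturbation claim that makes $K_i$ into a braid $K_i'$ with respect to the stabilized open book. In principle this is standard — transversality to the pages is an open condition and one can arrange the stabilization locally in a region disjoint from $K_i$ — but a careful reader would want to see that the resulting transverse perturbation is canonical up to transverse isotopy with respect to $(B,\pi)$ (so that $K_i'$ is well-defined up to the equivalence in the corollary). This is exactly the sort of argument underlying Pavelescu's braiding theorem (Theorem~\ref{thm:EPput}) applied in a relative/parameterized setting, so the proof can simply invoke her methods. No additional ingredients beyond the two Pavelescu results stated in this subsection and Giroux's correspondence are needed.
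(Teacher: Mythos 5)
Your proposal is correct and follows exactly the route the paper intends: the paper offers no written proof, declaring the corollary ``immediate'' from Pavelescu's work, and the content you supply --- that the induced braids $K_i'$ are transversely isotopic to $K_i$ in $(Y,\xi)$, after which both directions reduce to Pavelescu's Transverse Markov Theorem for the single common open book $(B,\pi)$ --- is precisely the missing glue. Your identification of the one genuinely technical point (that braiding with respect to the stabilized open book can be achieved by a transverse isotopy, via Pavelescu's braiding argument in a relative setting) is apt and correctly attributed.
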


Below, we describe how to think about braids in terms of abstract open books. Suppose $(B,\pi)$ is an open book compatible with $(Y,\xi)$ and that $K$ is a transverse link in $(Y,\xi)$ which is an $n$-braid with respect to $(B,\pi)$. Let $(S,\varphi)$ be an abstract open book corresponding to $(B,\pi)$ and let $p_1,\dots,p_n$ be distinct points in the interior of $S$. Then $\varphi$ is isotopic to a diffeomorphism $\widehat\varphi$ of the pair $(S,\{p_1,\dots,p_n\})$ which fixes $\partial S$ point-wise, such that the braid $K$ is corresponds to $(\{p_1,\dots,p_n\}\times[0,1])/\sim_{\widehat\varphi}$ in the identification of $Y$ with $(S\times[0,1])/\sim_{\widehat\varphi}$. We say that the braid $K$ is \emph{encoded} by the \emph{pointed open book} $(S,\{p_1,\dots,p_n\},\widehat\varphi)$. 


Next, we provide an abstract interpretation of what it means for two braids to be transversely isotopic with respect to a given open book. Suppose $(S,\varphi_1)$ and $(S,\varphi_2)$ are two abstract open books corresponding to $(B,\pi)$ and $K_1$ and $K_2$ are braids with respect to $(B,\pi)$, encoded by the pointed open books $(S,\{p_1^1,\dots,p_n^1\},\widehat\varphi_1)$ and $(S,\{p_1^2,\dots,p_n^2\},\widehat\varphi_2)$. If follows from the discussion in \cite{EPthesis} that $K_1$ and $K_2$ are transversely isotopic with respect to $(B,\pi)$ if and only $\widehat\varphi_2$ is isotopic to $h\circ \widehat\varphi_1\circ h^{-1}$ for some diffeomorphism $h$ which sends $\{p_1^1,\dots,p_n^1\}$ to $\{p_1^2,\dots,p_n^2\}$ (by an isotopy which fixes $\{p_1^2,\dots,p_n^2\}$ and $\partial S$ point-wise).

\begin{figure}[!htbp]
\labellist 
\hair 2pt 
\small
\pinlabel $B_0$ at 91 3
\pinlabel $p_n$ at 23 57
\pinlabel $p_{n+1}$ at 60 8
\pinlabel $\gamma$ at 89 57

\endlabellist 
\begin{center}
\includegraphics[height = 3.4cm]{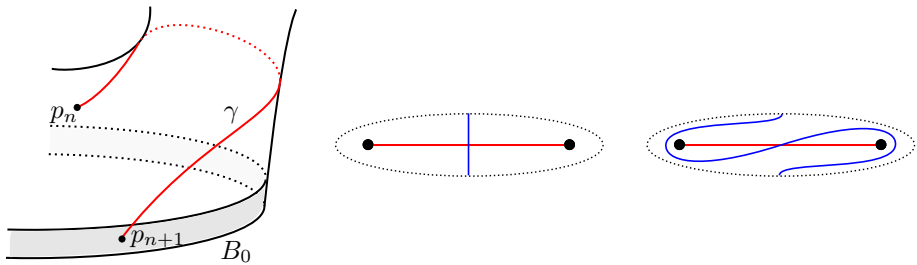}
\caption{\quad On the left, a portion of $S$ near the boundary component $B_0$. The shaded region is $N(B_0)$. The map $d_{\gamma}$ is the identity outside of a neighborhood of the arc $\gamma$. The middle and rightmost figures show such a neighborhood. The diffeomorphism $d_{\gamma}$ is specified on this neighborhood up to isotopy by where it sends the vertical blue arc in the middle diagram. The rightmost diagram shows the image of this arc under $d_{\gamma}$. }
\label{fig:HalfDehn}
\end{center}
\end{figure}

The following is an abstract analogue of positive Markov stabilization. Consider the pointed open book $(S,\{p_1,\dots,p_n\},\widehat\varphi)$. Let $p_{n+1}$ be a point in a collar neighborhood $N(B_0)$ of some boundary component $B_0$ of $S$ such that $\widehat\varphi$ is the identity on $N(B_0)$. Let $\gamma$ be a properly embedded arc in $S-\{p_1,\dots,p_{n-1}\}$ with endpoints on $p_n$ and $p_{n+1}$, and let $d_{\gamma}$ denote the right-handed half twist along $\gamma$, as indicated in Figure \ref{fig:HalfDehn}. Then the pointed open book $(S,\{p_1,\dots,p_{n+1}\},d_{\gamma}\circ\widehat\varphi)$ is said to be a \emph{positive Markov stabilization} of $(S,\{p_1,\dots,p_n\},\widehat\varphi)$. If $K$ is encoded by $(S,\{p_1,\dots,p_n\},\widehat\varphi)$, then any positive Markov stabilization $K^+$ of $K$ is encoded by some positive Markov stabilization $(S,\{p_1,\dots,p_{n+1}\},d_{\gamma}\circ\widehat\varphi)$ of $(S,\{p_1,\dots,p_n\},\widehat\varphi)$, as discussed in \cite{EPthesis}.

It follows from the above discussion and Corollary \ref{cor:epcor} that any two pointed open books which encode transversely isotopic links are related by isotopy, conjugation, positive (open book) stabilization and positive Markov stabilization. We use this fact in the next section to define our invariant $t$.

\section{The BRAID Invariants} 
\label{sec:new_invt}

In this section, we define the BRAID invariants $t$ and $\widehat t$. Our construction uses a Heegaard diagram very similar to that used in the construction of the LOSS invariants.

Suppose $K$ is a transverse link in $(Y,\xi)$, braided with respect to some open book decomposition $(B,\pi)$ compatible with $(Y,\xi)$. Let $(S,\varphi)$ be an abstract open book corresponding to $(B,\pi)$, and let $(S,\{p_1,\dots,p_n\},\widehat\varphi)$ be a pointed open book encoding $K$. A \emph{basis} for $(S,\{p_1,\dots,p_n\})$ is a set $\{a_1,\dots,a_k\}$ of properly embedded arcs in $S$ such that $S-\{a_1,\dots,a_k\}$ is a union of $n$ disks each of which contains exactly one point in $\{p_1,\dots,p_n\}$. Let $\{b_1,\dots,b_k\}$ be another such basis, where each $b_i$ is obtained from $a_i$ by shifting the endpoints of $a_i$ slightly in the direction of the orientation on $\partial S$ and isotoping to ensure that there is a single transverse intersection between $b_i$ and $a_i$, as discussed in Subsection \ref{sub:loss_invariant}.

Let $\Sigma$ denote the surface $S_{1/2}\cup -S_0$. For $i=1,\dots,k$, let $\alpha_i=a_i\times\{1/2\}\cup a_i\times\{0\}$ and $\beta_i=b_i\times\{1/2\} \cup \widehat\varphi(b_i)\times\{0\}$, and let $w_i= p_i\times\{1/2\}$ and $z_i=p_i\times\{0\}$. Let $\alphas = \{\alpha_1,\dots,\alpha_k\}$, $\betas = \{\beta_1,\dots,\beta_k\}$, $\zs_K=\{z_1,\dots,z_n\}$ and $\ws_K = \{w_1,\dots,w_n\}.$ Then $(\Sigma, \alphas,\betas,\zs_K,\ws_K)$ is a multi-pointed Heegaard diagram for $K\subset Y$. It follows that $\mathcal{H}=(\Sigma, \betas,\alphas,\ws_K,\zs_K)$ is a multi-pointed Heegaard diagram for $K\subset -Y$. See Figure \ref{fig:texample} for an example. 

Let $\x(\HD)$ denote the generator of $\CFKm(\HD)$ consisting of the intersection points on $S_{1/2}$ between the $\alpha_i$ and $\beta_i$ curves. Note that $\x(\HD)$ is a cycle in $\CFKm(\HD)$. We define \[t(K):=[x(\HD)]\in \HFKm(-Y,K)=\HFKm(\HD),\] and we define $\widehat t(K)\in\HFKh(-Y,K)$ to be the image of $t(K)$ under the natural map $p_*:\HFKm(\HD)\rightarrow \HFKh(\HD)$ discussed in Subsection \ref{sub:hfk}. 
The theorem below justifies this notation and implies that $t$ and $\widehat t$ are transverse link invariants.

\begin{figure}[!htbp]
\labellist 
\hair 2pt 
\small
\pinlabel $S$ at 88 190
\pinlabel $S$ at 280 190
\pinlabel $p_1$ at 298 151
\pinlabel $p_2$ at 338 154
\pinlabel $p_1$ at 108 151
\pinlabel $p_2$ at 148 154
\pinlabel $x$ at 25 136
\pinlabel $\gamma$ at 128 116
\pinlabel $S_{1/2}$ at 470 15
\pinlabel $-S_0$ at 470 190
\pinlabel $z_1$ at 489 150
\pinlabel $z_2$ at 529 153
\pinlabel $w_1$ at 489 31
\pinlabel $w_2$ at 529 34

\endlabellist 

\begin{center}
\includegraphics[height = 5cm]{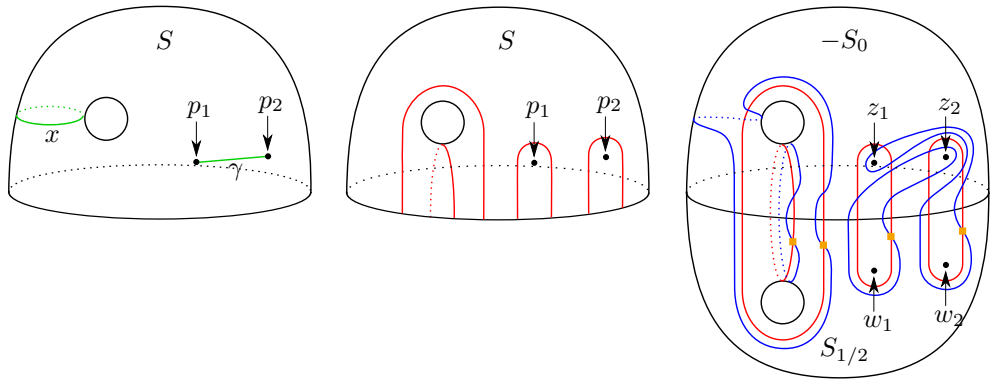}
\caption{\quad An example in which $S$ is the genus one surface with one boundary component and $n=2$. The diagram in the middle shows a basis for $(S,\{p_1,p_2\})$. The diagram on the right is the Heegaard diagram $\HD$ associated to this basis and the pointed open book $(S,\{p_1,p_2\}, D_{x}\circ d_{\gamma}^{-1}),$ where $D_x$ is a right-handed Dehn twist around the curve $x$ shown on the left and $d_{\gamma}^{-1}$ is a left-handed half-twist along the arc $\gamma$. The orange boxes represent the generator $\x(\HD)$. }
\label{fig:texample}
\end{center}
\end{figure}

\begin{theorem}
\label{thm:transvinvt}
Suppose $(S_1,\{p_1^1,\dots,p_n^1\},\widehat\varphi_1)$ and $(S_2,\{p_1^2,\dots,p_n^2\},\widehat\varphi_2)$ are pointed open books encoding braids $K_1$ and $K_2$ with respect to open books $(B_1,\pi_1)$ and $(B_2,\pi_2)$ compatible with $(Y,\xi)$. Let $\HD_1$ and $\HD_2$ be the Heegaard diagrams associated to these pointed open books and bases for $(S_1,\{p_1^1,\dots,p_n^1\})$ and $(S_2,\{p_1^2,\dots,p_n^2\})$. If $K_1$ and $K_2$ are transversely isotopic in $(Y,\xi)$, then there is an isomorphism of graded $\F[U_1,\dots,U_l]-$modules, \[\psi:\HFKm(\HD_1)\rightarrow\HFKm(\HD_2),\] which sends $[\x(\HD_1)]$ to $[\x(\HD_2)]$.\footnote{Here, we are assuming that the smooth link type represented by $K_1$ and $K_2$ has $l$-components and that $U_i$ is the formal variable associated to the $ith$ component.} Likewise, there is an isomorphism of graded $\F-$modules, \[\widehat\psi:\HFKh(\HD_1)\rightarrow\HFKh(\HD_2),\] which sends $[\x(\HD_1)]$ to $[\x(\HD_2)]$.

\end{theorem}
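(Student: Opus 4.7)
The plan is to proceed along the lines of the LOSS invariance argument. By Corollary~\ref{cor:epcor} and the discussion immediately preceding the statement, any two pointed open books encoding transversely isotopic braids, together with choices of bases, are related by a finite sequence of elementary moves:
(i) change of basis $\{a_1,\dots,a_k\}$;
(ii) isotopy of $\widehat\varphi$ rel $\partial S \cup \{p_1,\dots,p_n\}$;
(iii) conjugation $\widehat\varphi \mapsto h \circ \widehat\varphi \circ h^{-1}$ by a diffeomorphism $h$ of the pair $(S,\{p_1,\dots,p_n\})$ rel $\partial S$;
(iv) positive open book stabilization; and
(v) positive Markov stabilization. It therefore suffices to exhibit, for each such move, an isomorphism on $\HFKm$ sending $[\x(\HD)]$ to $[\x(\HD')]$, and then to compose. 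The statement for $\widehat t$ will follow from the naturality of these isomorphisms with respect to the quotient map $p_*$ discussed in Subsection~\ref{sub:hfk}.

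For moves (i)--(iv), the key observation is that the generator $\x(\HD)$ is supported entirely on the $S_{1/2}$ portion of $\Sigma$. An arc-slide relating two bases translates into a handleslide of $\alpha$ curves on the $-S_0$ side together with a matched handleslide of $\beta$ curves; the associated holomorphic-triangle map is computable by a small-triangle count near $S_{1/2}$ and identifies $\x(\HD)$ with $\x(\HD')$, exactly as in \cite{LOSS}. An isotopy of $\widehat\varphi$ produces an isotopy of the $\beta_i$ curves confined to $-S_0$, which again leaves the intersection points on $S_{1/2}$ unchanged. Conjugation by $h$ yields a diffeomorphism of the entire Heegaard diagram that carries one generator to the other tautologically. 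Positive open book stabilization attaches a 1-handle to $S$ and composes the monodromy with a Dehn twist around a curve $\gamma$ through the 1-handle; by enlarging the basis by the co-core of this handle, the new Heegaard diagram is an index 1/2 stabilization of $\HD$ away from $S_{1/2}$, and $\x(\HD')$ differs from $\x(\HD)$ by one distinguished intersection point, precisely the one identified with the canonical generator under the stabilization isomorphism.

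The main obstacle is (v), positive Markov stabilization. Here the Heegaard surface $\Sigma$ gains a new pair of basepoints $(z_{n+1},w_{n+1})$ corresponding to the added marked point $p_{n+1}$ in $N(B_0)$, a new basis arc $a_{n+1}$ separating $p_{n+1}$ from its neighbor, and hence a new pair of curves $\alpha_{n+1},\beta_{n+1}$; on the $-S_0$ side the $\beta$ curves are further modified by the half-twist $d_\gamma$. The strategy is to isolate the effect of $d_\gamma$ in a disk neighborhood of $\gamma$ that meets no other basis arc, and to show that this local modification can be realized as a combination of a linked index 0/3 stabilization (in the sense of Subsection~\ref{sub:hfk}) together with isotopies of the new curves supported in $N(B_0)$. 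Under the quasi-isomorphism associated to linked 0/3 stabilization, namely $\mathbf{x} \mapsto \mathbf{x}\cup\{y'\}$, the old generator $\x(\HD)$ is carried to the unique extension of itself that participates in the canonical cycle on $\HD'$, and by inspection this extension is $\x(\HD')$. A careful local picture in $N(B_0)$ — essentially a half-twisted version of the diagram in Figure~\ref{fig:nonfreestab} — will be the crux of the verification.

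Assembling the isomorphisms from (i)--(v) produces the desired $\psi:\HFKm(\HD_1)\to \HFKm(\HD_2)$ with $\psi[\x(\HD_1)] = [\x(\HD_2)]$. The grading claim is automatic since each move induces a graded isomorphism of modules over the appropriate polynomial ring. Finally, applying $p_*$ at each step yields the companion isomorphism $\widehat\psi$ on $\HFKh$ with $\widehat\psi[\x(\HD_1)]=[\x(\HD_2)]$, completing the proof.
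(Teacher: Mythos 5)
Your proposal is correct and follows essentially the same route as the paper: reduce to the five elementary moves, handle change of basis and isotopy by small-triangle counts, conjugation by a tautological homeomorphism of diagrams, positive open book stabilization by index 1/2 stabilization, and positive Markov stabilization by identifying it with a linked index 0/3 stabilization whose quasi-isomorphism $\x\mapsto\x\cup\{y'\}$ carries $\x(\HD)$ to $\x(\HD')$. This matches the paper's argument step for step.
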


\begin{proof}[Proof of Theorem \ref{thm:transvinvt}]
From the discussion at the end of Subsection \ref{sub:transverse_braids}, it suffices to show that for a pointed open book $(S,\{p_1,\dots,p_n\},\widehat\varphi)$ and basis $\{a_1,\dots,a_k\}$ for $(S,\{p_1,\dots,p_n\})$, each of the five operations 
\begin{enumerate}
\item \label{item:1} change of the basis $\{a_1,\dots,a_k\}$, 
\item \label{item:2} isotopy of $\widehat \varphi$ fixing $\{p_1,\dots,p_n\}$ point-wise,
\item \label{item:3} positive open book stabilization,
\item \label{item:4} conjugation of $\widehat\varphi$,
\item \label{item:5} positive Markov stabilization,
\end{enumerate}
gives rise to an isomorphism on knot Floer homology which sends $t$ to $t$. For (\ref{item:1}), (\ref{item:2}) and (\ref{item:3}), this follows from the proofs of \cite[Proposition 3.4]{HKM1}, \cite[Lemma 3.3]{HKM1} and \cite[Theorem 2.11]{LOSS}, respectively. We first remind the reader of the basic ideas in these proofs before proving invariance of $t$ under (\ref{item:4}) and (\ref{item:5}).

(\ref{item:1}): After relabeling the $a_i$, we can assume $a_1$ and $a_2$ have adjacent endpoints on $\partial S$; that is, there exists an arc $\tau\subset \partial S$ with endpoints on $a_1$ and $a_2$ whose interior is disjoint from all $a_i$. We define $a_1+a_2$ to be the isotopy class (rel. endpoints) of the union $a_1\cup\tau\cup a_2$, as shown on the left in Figure \ref{fig:arcslidemove}. The modification \[\{a_1,a_2,\dots,a_k\}\rightarrow \{a_1+a_2,a_2,\dots,a_k\}\] is called an \emph{arc slide}. Since any two bases for $(S,\{p_1,\dots,p_n\})$ are related by a sequence of arc slides (a trivial extension of \cite[Lemma 3.6]{HKM1}), we need only show that a single arc slide gives rise to an isomorphism on knot Floer homology sending $t$ to $t$. The Heegaard diagrams $\HD$ and $\HD'$ associated to the bases $\{a_1,\dots,a_k\}$ and $\{a_1+a_2,\dots,a_k\}$ above are related by two handleslides: a handleslide of $\beta_1$ over $\beta_2$, followed by a handleslide of $\alpha_1$ over $\alpha_2$. The middle and rightmost portions of Figure \ref{fig:arcslidemove} show parts of the Heegaard triple diagrams associated to these two handleslides. Let $g$ and $f$ denote the corresponding quasi-isomorphisms. There are unique pseudo-holomorphic triangles contributing to $g(\x(\HD))$ and $f(g(\x(\HD)))$ whose domains are unions of small triangles, as shown in Figure \ref{fig:arcslidemove}. From this, it is easy to see that the composition $f_*\circ g_*$ sends $[\x(\HD)]$ to $[\x(\HD')]$.

\begin{figure}[!htbp]
\labellist 
\hair 2pt 
\small
\pinlabel $a_2$ at 155 22
\pinlabel $a_1+a_2$ at 82 22
\pinlabel $a_1$ at 10 22
\pinlabel $S$ at 86 77
\pinlabel $S_{1/2}$ at 390 73
\pinlabel $-S_0$ at 387 107

\pinlabel $S_{1/2}$ at 645 73
\pinlabel $-S_0$ at 642 107


\endlabellist 
\begin{center}
\includegraphics[height = 2.4cm]{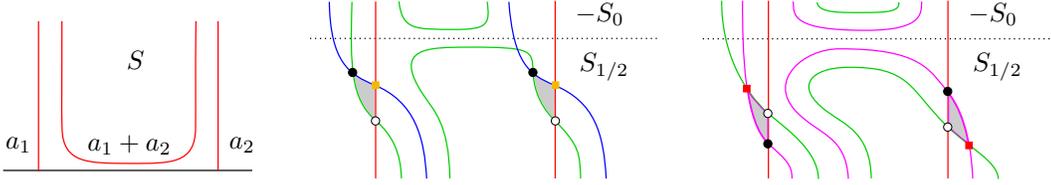}
\caption{\quad On the left, $a_1+a_2$ is the result of arc sliding $a_1$ over $a_2$. This move corresponds to two handleslides whose associated triple diagrams are shown in the middle and right. The small shaded triangles in these diagrams are parts of the two domains contributing to $g(\x(\HD))$ and $f(g(\x(\HD)))$, respectively. The generators $\x(\HD)$ and $\x(\HD')$ are represented by the orange and red boxes in the middle and right figures, respectively.}
\label{fig:arcslidemove}
\end{center}
\end{figure}

(\ref{item:2}): Let $\HD$ and $\HD'$ be the Heegaard associated to the basis $\{a_1,\dots,a_k\}$ and the pointed open books $(S,\{p_1,\dots,p_n\},\widehat \varphi)$ and $(S,\{p_1,\dots,p_n\},\widehat \varphi')$, respectively, where $\widehat\varphi'$ is obtained from $\widehat\varphi$ by an isotopy fixing $\{p_1,\dots,p_n\}$ point-wise. Figure \ref{fig:isotopy} shows a portion of the Heegaard triple diagram associated to this isotopy. Let $f$ denote the corresponding quasi-isomorphism. There is a unique pseudo-holomorphic triangle contributing to $f(\x(\HD))$ whose domain is a union of small triangles of the sort shown in the figure. It follows that $f_*$ sends $[\x(\HD)]$ to $[\x(\HD')].$

\begin{figure}[!htbp]
\labellist 
\hair 2pt 
\small
\pinlabel $S_{1/2}$ at 70 75
\pinlabel $-S_0$ at 67 105
\endlabellist 
\begin{center}
\includegraphics[height = 2.8cm]{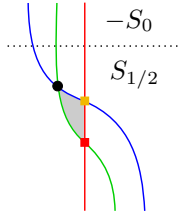}
\caption{\quad Part of the Heegaard triple diagram associated to an isotopy. The generators $\x(\HD)$ and $\x(\HD')$ are represented by the orange and red boxes. The domain contributing to $f(\x(\HD))$ is a union of small shaded triangles of the kind shown in this figure. }
\label{fig:isotopy}
\end{center}
\end{figure}

(\ref{item:3}): Let $(S',\varphi'=D_{\gamma}\circ\varphi)$ be a positive open book stabilization of $(S,\varphi)$. We can choose a basis $\{a_1,\dots,a_k\}$ for $(S,\{p_1,\dots,p_n\})$ which is disjoint from the curve $\gamma$ (a trivial extension of \cite[Section 2.4]{LOSS}). Let $a_{k+1}$ be the co-core of the 1-handle attached in forming $S'$. Let $\HD$ and $\HD'$ be the Heegaard diagrams associated to the bases $\{a_1,\dots,a_k\}$ and $\{a_1,\dots,a_{k+1}\}$ and the pointed open books $(S,\{p_1,\dots,p_n\},\widehat \varphi)$ and $(S',\{p_1,\dots,p_n\},\widehat\varphi')$, respectively. Then $\HD'$ is an index 1/2 stabilization of $\HD$, as indicated in Figure \ref{fig:posstab}. Moreover, the isomorphism between $\CFKm(\HD)$ and $\CFKm(\HD')$ clearly identifies $\x(\HD)$ with $\x(\HD')$.

\begin{figure}[!htbp]
\labellist 
\hair 2pt 
\small
\pinlabel $S_{1/2}$ at 14 130
\pinlabel $-S_{0}$ at 75 130
\pinlabel $S'_{1/2}$ at 145 130
\pinlabel $-S'_{0}$ at 317 130
\endlabellist 
\begin{center}
\includegraphics[height = 3.4cm]{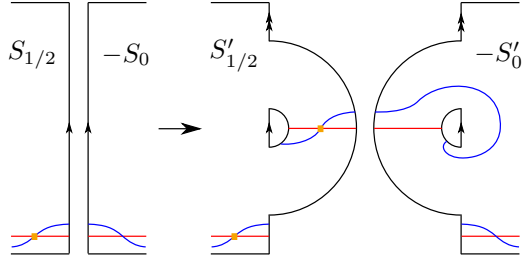}
\caption{\quad On the left, the Heegaard diagram $\HD$. On the right, the Heegaard diagram $\HD'$. This shows that positive open book stabilization corresponds to an index 1/2 stabilization. The orange boxes represent components of $\x(\HD)$ and $\x(\HD')$.}
\label{fig:posstab}
\end{center}
\end{figure}

Below, we show that the moves in (\ref{item:4}) and (\ref{item:5}) also give rise to isomorphisms of knot Floer homology which send $t$ to $t$.

(\ref{item:4}): Let $\HD=(\Sigma,\{\beta_1,\dots,\beta_k\},\{\alpha_1,\dots,\alpha_k\},\ws_K,\zs_K)$ be the Heegaard diagram associated to the basis $\{a_1,\dots,a_k\}$ for $(S,\{p_1,\dots,p_n\})$ and the pointed open book $(S,\{p_1,\dots,p_n\},\widehat\varphi)$. Likewise, let $\HD'=(\Sigma,\{\beta_1',\dots,\beta_k'\},\{\alpha_1',\dots,\alpha_k'\},\ws_K',\zs_K')$ be the Heegaard diagram associated to the basis $\{h(a_1),\dots,h(a_k)\}$ for $(S,\{h(p_1),\dots,h(p_n)\})$ and the pointed open book $(S,\{h(p_1),\dots,h(p_n)\},h\circ\widehat\varphi\circ h^{-1})$. Recall that $\alpha_i=a_i\times\{1/2\}\cup a_i\times\{0\}$ and $\beta_i=b_i\times\{1/2\}\cup \widehat\varphi(b_i)\times\{0\}$. Likewise, $\alpha_i'=h(a_i)\times\{1/2\}\cup h(a_i)\times\{0\}$ and $\beta_i'=h(b_i)\times\{1/2\}\cup h\circ\widehat\varphi\circ h^{-1}(h(b_i))\times\{0\}=h(b_i)\times\{1/2\}\cup h(\widehat\varphi(b_i))\times\{0\}.$ In addition, $\ws_K'=h(\ws_K)$ and $\zs_K'=h(\zs_K)$. The Heegaard diagram $\HD'$ is therefore homeomorphic to $\HD$, so there is a canonical isomorphism of complexes $\CFKm(\HD) \cong \CFKm(\HD')$. Moreover, this isomorphism clearly identifies $\x(\HD)$ with $\x(\HD')$.

(\ref{item:5}): Let $\HD=(\Sigma,\{\beta_1,\dots,\beta_k\},\{\alpha_1,\dots,\alpha_k\},\ws_K,\zs_K)$ be the Heegaard diagram associated to the basis $\{a_1,\dots,a_k\}$ and the pointed open book $(S,\{p_1,\dots,p_n\},\widehat\varphi)$. Let $(S,\{p_1,\dots,p_{n+1}\},d_{\gamma}\circ \widehat\varphi)$ be a positive Markov stabilization of $(S,\{p_1,\dots,p_n\},\widehat\varphi)$, and let $a_{k+1}$ be a boundary parallel arc which splits off a disk containing only the point $p_{n+1}$, so that $\{a_1,\dots,a_{k+1}\}$ is a basis for $(S,\{p_1,\dots,p_{n+1}\})$. Let $w_{i} = p_{i}\times\{1/2\}$ and $z_{i} =p_i\times\{0\}$. Let $\HD'=(\Sigma,\{\beta_1,\dots,\beta_{k+1}\},\{\alpha_1,\dots,\alpha_{k+1}\},\ws_K\cup\{w_{n+1}\},\zs_K\cup \{z_{n+1}\})$ be the Heegaard diagram associated to the basis $\{a_1,\dots,a_{k+1}\}$ and the pointed open book $(S,\{p_1,\dots,p_{n+1}\},d_{\gamma}\circ\widehat\varphi)$. Note that $\beta_{k+1}$ only intersects $\alpha_{k+1}$ and in exactly two points, $x'$ and $y'$, as indicated in Figure \ref{fig:markovstab2}. The diagram $\HD'$ is therefore obtained from $\HD$ by linked index 0/3 stabilization. As explained in Subsection \ref{sub:hfk}, the chain map from $\CFKm(\HD)$ to $\CFKm(\HD')$ which sends a generator $\x$ to $\x\cup \{y'\}$ induces an isomorphism on homology. Now, just observe that $\x(\HD)\cup\{y'\} = \x(\HD')$.

\begin{figure}[!htbp]
\labellist 
\hair 2pt 
\small
\pinlabel $S$ at 140 90
\pinlabel $x'$ at 248 85
\pinlabel $y'$ at 304 45
\pinlabel $S_{1/2}$ at 335 50
\pinlabel $-S_0$ at 335 90
\pinlabel $p_n$ at 20 154
\pinlabel $p_{n+1}$ at 100 138
\pinlabel $z_n$ at 205 154
\pinlabel $z_{n+1}$ at 285 139
\pinlabel $w_{n}$ at 208 -8
\pinlabel $w_{n+1}$ at 289 7
\pinlabel $a_{k+1}$ at 122 110
\pinlabel $\gamma$ at 50 108
\endlabellist 
\begin{center}
\includegraphics[height = 3.4cm]{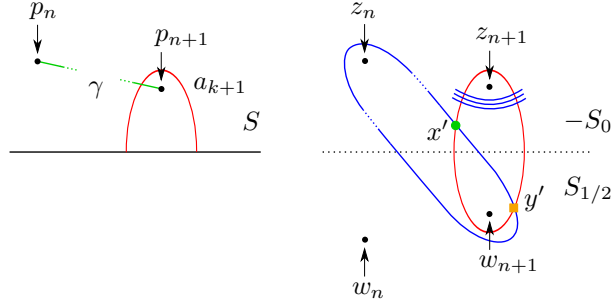}
\caption{\quad On the left, the arcs $a_{k+1}$ and $\gamma$. On the right, the corresponding portion of the Heegaard diagram $\HD'$; $\beta_{k+1}$ is obtained by applying the half-twist $d_{\gamma\times\{0\}}$ to $\alpha_{k+1}$. This shows that positive Markov stabilization corresponds to a linked index 0/3 stabilization. }
\label{fig:markovstab2}
\end{center}
\end{figure}

This completes the proof of Theorem \ref{thm:transvinvt}.
\end{proof}

\section{Right-veering Braids and $t$}
\label{sec:rv}

In this section, we discuss the relationship between our invariant and right-veering braids. Suppose $K$ is a braid with respect to the open book $(B,\pi)$. Let $(S,\varphi)$ be an abstract open book corresponding to $(B,\pi)$ and let $(S,\{p_1,\dots,p_n\},\widehat\varphi)$ be a pointed open book encoding $K$. Suppose $a,b:[0,1]\rightarrow S-\{p_1,\dots,p_n\}$ are properly embedded arcs on $S$ such that $a(0)=b(0)$ and $a(1)=b(1)$. We say that $b$ is \emph{to the right} of $a$ if either $b$ is isotopic to $a$ in $S-\{p_1,\dots,p_n\}$ or, after isotoping $a$ and $b$ in $S-\{p_1,\dots,p_n\}$ so that they intersect efficiently (while keeping their endpoints fixed), the ordered pair $(\dot{b}(0),\dot{a}(0))$ specifies the orientation of $S$.\footnote{Here, we are also using $a$ and $b$ to denote the arcs resulting from these isotopies.} Following \cite{HKM4}, we call the pointed open book $(S,\{p_1,\dots,p_n\},\widehat\varphi)$ \emph{right-veering} if, for every properly embedded arc $a$ in $S-\{p_1,\dots,p_n\}$, $\widehat\varphi(a)$ is to the right of $a$. The lemma below follows easily from the discussion in Subsection \ref{sub:transverse_braids}; one simply observes that conjugation does not affect right-veering-ness.

\begin{lemma}
A pointed open book encoding the braid $K$ is right-veering if and only if \emph{all} pointed open books encoding $K$ are right-veering. \qed
\end{lemma}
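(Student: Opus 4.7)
The plan is to prove that right-veering is intrinsic to the braid $K$ by verifying invariance under each move relating different pointed open books encoding $K$. By the discussion at the end of Subsection~\ref{sub:transverse_braids}, any two such pointed open books differ by a finite sequence of four operations: (i) isotopy of the monodromy fixing the marked points setwise, (ii) conjugation by a diffeomorphism identifying the marked point sets, (iii) positive open book stabilization, and (iv) positive Markov stabilization. It will therefore suffice to check invariance of right-veering under each of these four moves.

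Invariance under (i) is immediate, since right-veering is defined in terms of isotopy classes of arcs in the complement of the marked points and hence depends only on the mapping class of $\widehat\varphi$. For (ii), write $\widehat\varphi_2 = h\circ\widehat\varphi_1\circ h^{-1}$ with $h(\{p_i^1\}) = \{p_i^2\}$. Given any properly embedded arc $a$ in $S-\{p_i^2\}$, the arc $h^{-1}(a)$ is properly embedded in $S-\{p_i^1\}$, and $\widehat\varphi_2(a) = h(\widehat\varphi_1(h^{-1}(a)))$. If $\widehat\varphi_1$ is right-veering, then $\widehat\varphi_1(h^{-1}(a))$ lies to the right of $h^{-1}(a)$; because $h$ is an orientation-preserving diffeomorphism it preserves the cyclic ordering of tangent vectors at the boundary endpoints and thus the ``to the right'' relation, so $\widehat\varphi_2(a)$ lies to the right of $a$. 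This is the observation the hint refers to, and the symmetric argument yields the converse.

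For move (iii), positive open book stabilization sends $(S,\widehat\varphi)$ to $(S', D_\gamma\circ\widehat\varphi)$, where $S'$ is obtained from $S$ by attaching a $1$-handle and $\gamma$ passes once through the handle. I would use that $D_\gamma$ is itself right-veering and that a composition of right-veering diffeomorphisms is right-veering (extending \cite{HKM4} to the marked-surface setting) for the forward direction. For the converse, any arc $a$ in $S-\{p_i\}$ can be viewed as an arc in $S'-\{p_i\}$ disjoint from $\gamma$, on which $D_\gamma$ acts trivially, so a failure of right-veering for $\widehat\varphi$ would immediately witness one for $D_\gamma\circ\widehat\varphi$. Move (iv) follows by the same scheme, with the right-handed half-twist $d_\gamma$ along the arc from $p_n$ to the new puncture $p_{n+1}$ playing the role of $D_\gamma$; here one uses that any arc in $S-\{p_1,\dots,p_n\}$ can be realized as an arc in $S-\{p_1,\dots,p_{n+1}\}$ disjoint from $\gamma$.

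The main potential obstacle is extending the Honda--Kazez--Mati{\'c} composition property of right-veering diffeomorphisms to the marked-surface setting used in the two stabilization cases. I expect this extension to be routine, since the ``to the right'' relation at boundary endpoints is insensitive to interior punctures once arcs are required to avoid them, but it is the one place where the argument is not purely formal. The conjugation case (ii), by contrast, reduces to the orientation-preserving nature of $h$ and is essentially definitional, which is why the author singles it out in the hint.
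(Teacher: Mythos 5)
Your treatment of moves (i) and (ii) is correct and is exactly the paper's (essentially one-line) argument: the paper disposes of the lemma by noting that isotopy obviously preserves right-veering-ness and that conjugation by an orientation-preserving diffeomorphism $h$ does too, precisely as you argue. The problem is with your moves (iii) and (iv). The lemma concerns pointed open books encoding the \emph{same braid} $K$ with respect to a \emph{fixed} open book $(B,\pi)$; by the characterization at the end of Subsection~\ref{sub:transverse_braids}, two such pointed open books have the same page $S$ and monodromies related by isotopy and conjugation only. Positive open book stabilization and positive Markov stabilization are the extra moves needed to relate pointed open books encoding merely \emph{transversely isotopic links} (Corollary~\ref{cor:epcor}); they change the open book or the braid index and so do not appear here. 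Including them is not just unnecessary --- it would make the lemma false.

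Concretely, the ``converse'' directions you sketch for (iii) and (iv) do not work. For (iii), viewing an arc $a\subset S$ as an arc of $S'$ disjoint from $\gamma$ tells you nothing about $D_\gamma\circ\widehat\varphi(a)=D_\gamma(\widehat\varphi(a))$, since $\widehat\varphi(a)$ need not be disjoint from $\gamma$; and even at the level of endpoints, the ``to the right'' comparison is made only after isotoping to efficient position in $S'$, where new isotopies across the attached handle are available. In fact, by the work of Giroux and Honda--Kazez--Mati\'c, \emph{every} open book becomes right-veering after sufficiently many positive stabilizations, so non-right-veering-ness genuinely fails to be preserved by move (iii); the analogous phenomenon occurs for positive Markov stabilization of braids. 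This is exactly why the paper defines right-veering as a property of a braid \emph{with respect to a fixed} $(B,\pi)$ rather than of its transverse isotopy class, and why Theorem~\ref{thm:rightveering} is stated in terms of a single braided presentation. The fix is simply to delete (iii) and (iv) from your list of moves to check; what remains is the paper's proof.
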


We therefore define a braid $K$ with respect to $(B,\pi)$ to be right-veering if all pointed open books encoding $K$ are right-veering. Recall that Theorem \ref{thm:rightveering} claims that if $K$ is not right-veering as a braid with respect to some $(B,\pi)$, then $\widehat t(K)=0.$

\begin{proof}[Proof of Theorem \ref{thm:rightveering}]

Suppose $K$ is an $n$-braid with respect to $(B,\pi)$ which is not right-veering. Let $(S,\{p_1,\dots,p_n\},\widehat\varphi)$ be a pointed open book encoding $K$. Then there exists some properly embedded arc $a$ on $S-\{p_1,\dots,p_n\}$ such that $\widehat\varphi(a)$ is \emph{not} to the right of $a$. Let us first consider the case where $a$ is either non-separating or separates $S$ into two pieces which both contain a point of $\{p_1,\dots,p_n\}$. In this case, we may incorporate $a$ into a basis $\{a=a_1, a_2,\dots,a_k\}$ for $(S,\{p_1,\dots,p_n\})$. 

Let $\mathcal{H}=(\Sigma, \{\beta_1,\dots,\beta_k\},\{\alpha_1,\dots,\alpha_k\},\ws_K,\zs_K)$ be the Heegaard diagram for $K\subset -Y$ associated to this basis and the data $(S,\{p_1,\dots,p_n\},\widehat\varphi)$, as described in the beginning of this section. We may assume that $\alpha_1$ and $\beta_1$ intersect efficiently in $-S_0-\{p_1\times\{0\},\dots,p_n\times\{0\}\}$. The fact that $\widehat\varphi(a)$ is not to the right of $a$ means that an arc of $\beta_1$ forms a half-bigon with an arc of $\alpha_1$ and an arc of the curve $\partial (-S_0)=-\partial (S_{1/2})$, as shown in Figure \ref{fig:rv}. Let $x_1'$ denote the intersection point between $\alpha_1$ and $\beta_1$ on $-S_0$ which is a corner of this half-bigon and, for $i=1,\dots,k$, let $x_i$ denote the unique intersection point between $\alpha_i$ and $\beta_i$ on $S_{1/2}$. 

\begin{figure}[!htbp]
\labellist 
\hair 2pt 
\small
\pinlabel $w$ at 270 17
\pinlabel $w$ at 116 17
\pinlabel $z$ at 228 137
\pinlabel $-S_0$ at 21 101
\pinlabel $S_{1/2}$ at 21 38

\pinlabel $-S_0$ at 175 101
\pinlabel $S_{1/2}$ at 175 38

\pinlabel $x_1$ at 80 40
\pinlabel $x_1'$ at 82 103

\pinlabel $x_1$ at 233 40
\pinlabel $x_1'$ at 235 103
\pinlabel $y$ at 209 168

\endlabellist 
\begin{center}
\includegraphics[height = 3.6cm]{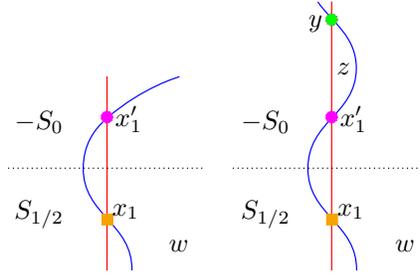}
\caption{\quad A portion of $\HD$ near $\alpha_1$, $\beta_1$ and $\partial (-S_0)=-\partial (S_{1/2})$. Either $\partial^-(\x')=\x$ as on the left, or $\partial^-(\x')=\x+U_z\y$ as on the right. }
\label{fig:rv}
\end{center}
\end{figure}

The obvious bigon in $\Sigma$ with corners at $x_1'$ and $x_1$ is the domain of a class $\phi_0\in \pi_2(\x',\x)$, where $\x'$ is the generator $\{x'_1,x_2,\dots,x_n\}$ and $\x$ is the generator $\{x_1,x_2,\dots,x_n\}$. This class avoids all basepoints, has Maslov index one and supports a unique pseudo-holomorphic representative. Now, suppose $\phi\in\pi_2(\x',\y)$ is class with a pseudo-holomorphic representative for some $\y\neq \x\neq \x'$. Since its domain $D(\phi)$ has no negative multiplicities, $D(\phi)$ cannot have corners at any of $x_2,\dots,x_n$. $D(\phi)$ is therefore a bigon with corners at $x_1'$ and $y$ for some other intersection point $y\in \alpha_1\cap\beta_1\cap -S_0$. In particular, this shows that there can be at most one such $\y$. Since $\alpha_1$ and $\beta_1$ intersect efficiently in $-S_0$, such a bigon exists only if it contains some point $z\in\zs_K$. It follows from this discussion that either no such $\phi$ exists, in which case $\partial^-(\x')=\x$, or that $\partial^-(\x') = \x + U_z\y$. In either case, $t(K)=[\x]$ is in the image of multiplication by $U_z$, from which it follows that $\widehat t(K)=0$.

Next, we consider the case where the arc $a$ separates $S$ into two regions, $R$ and $T$, where $T$ does not contain any of the $p_i$. Let $p_{n+1}$ be a point in a neighborhood $N$ of $\partial S\cap T$ on which $\widehat\varphi$ is the identify, and let $K'$ be the braid with respect to $(B,\pi)$ encoded by the pointed open book $(S,\{p_1,\dots,p_{n+1}\},\widehat\varphi)$. Note that $K'$ is the union of $K$ with an unknot. We can now extend $a$ to a basis for $(S,\{p_1,\dots,p_{n+1}\})$, and it follows from the preceding discussion that $\widehat t(K')=0$. But it is easy to see that $\widehat t(K')=0$ if and only if $\widehat t(K)=0$. 

To prove this, we first pick a basis $\{a_1,\dots,a_k\}$ for $(S,\{p_1,\dots,p_n\})$ and we let $\mathcal{H}=(\Sigma, \{\beta_1,\dots,\beta_k\},\{\alpha_1,\dots,\alpha_k\},\ws_K,\zs_K)$ be the associated Heegaard diagram. Now, let $a_{k+1}$ be a boundary-parallel arc in $S$ which splits off a disk containing just $p_{n+1}$, so that $\{a_1,\dots, a_{k+1}\}$ is a basis for $(S,\{p_1,\dots,p_{n+1}\})$. Let $w_{n+1} = p_{n+1}\times\{1/2\}$ and $z_{n+1} = p_{n+1}\times\{0\}$. Then, \[\mathcal{H}'=(\Sigma, \{\beta_1,\dots,\beta_{k+1}\},\{\alpha_1,\dots,\alpha_{k+1}\},\ws_K\cup \{w_{n+1}\},\zs_K\cup \{z_{n+1}\})\] is the Heegaard diagram for $K'\subset -Y$ associated to this basis and the pointed open book $(S,\{p_1,\dots,p_{n+1}\},\widehat\varphi)$; see Figure \ref{fig:kprime}. Let $x$ be the intersection point $x\in\alpha_{k+1}\cap \beta_{k+1}\cap S_{1/2}$. It is clear that the inclusion \[\iota:\CFKh(\HD)\rightarrow \CFKh(\HD')\] which sends a generator $\x$ to $\x\cup x$ induces an injection $\iota_*$ on homology which sends $\widehat t(K)$ to $\widehat t(K')$. This completes the proof of Theorem \ref{thm:rightveering}.

\begin{figure}[!htbp]
\labellist 
\hair 2pt 
\small
\pinlabel $-S_0$ at 20 79
\pinlabel $S_{1/2}$ at 20 25

\pinlabel $w$ at 133 23
\pinlabel $w_{n+1}$ at 83 34
\pinlabel $z_{n+1}$ at 83 69
\pinlabel $x$ at 88 -5
\pinlabel $x'$ at 90 111

\endlabellist 
\begin{center}
\includegraphics[height = 2.4cm]{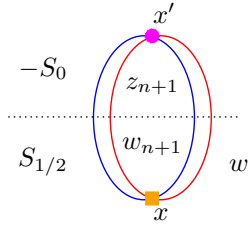}
\caption{\quad A portion of $\HD'$ near $\alpha_{k+1}$, $\beta_{k+1}$ and $\partial (-S_0)=-\partial (S_{1/2})$. Disks cannot leave the intersection point $x$ due to the presence of the basepoints in $\ws_K\cup\{w_{n+1}\}$. The only disks that can enter $x$ are the two canceling bigons from $x'$. It follows that $\iota_*$ is an inclusion.}
\label{fig:kprime}
\end{center}
\end{figure}
\end{proof}

\section{LOSS=BRAID} 
\label{sec:lossbraid}
In this short section, we identify the LOSS invariant $\Tm$ with our BRAID invariant $t$. Our main theorem is the following:

\begin{theorem}
\label{thm:LOSS=t}
Let $K$ be a transverse knot in $(S^3,\xi_{std})$. There exists an isomorphism of bigraded $\F[U]-$modules, \[\psi:\HFKm(-S^3,K)\rightarrow\HFKm(-S^3,K),\] which sends $\Tm(K)$ to $t(K)$.
\end{theorem}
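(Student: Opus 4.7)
The proof strategy, as indicated in the introduction, is to construct a single multi-pointed Heegaard diagram $\HD$ that simultaneously realizes both $\Tm(K)$ and $t(K)$ via the very same generator $\x(\HD)$. I would proceed as follows.

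First, choose a Legendrian approximation $L$ of $K$, so that $\Tm(K) = \Lm(L)$ by definition. Next, pick an open book decomposition $(B,\pi)$ compatible with $(S^3,\xi_{std})$ with $L$ sitting on a page $S_{1/2}$ as a homologically essential curve, and choose a LOSS-adapted basis $\{a_1,\ldots,a_k\}$ for $S$ (so that $L$ meets only $a_1$, transversely, in a single point). Then positively stabilize $(B,\pi)$ along an arc placed in a small neighborhood of $L\cap a_1$ to obtain $(B',\pi')$. The stabilization has two roles: it makes room to isotope the transverse pushoff $K$ of $L$ into a $1$-braid with respect to $(B',\pi')$ whose unique puncture $p_1$ sits on the co-core of the new $1$-handle $H$, and it creates the local topology needed to match the basepoint configurations of the two constructions.

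Extend the basis by adjoining the co-core $a_{k+1}$ of $H$, obtaining $\{a_1,\ldots,a_{k+1}\}$, which is simultaneously a basis for $(S', \{p_1\})$ in the BRAID setup and a basis whose associated LOSS Heegaard diagram for $L$ on the stabilized open book agrees with the stabilization-invariant LOSS diagram. Form the multi-pointed Heegaard diagram $\HD$ with $\alphas, \betas$ produced as in both constructions, and place the basepoints at the natural BRAID positions $w_1 = p_1 \times \{1/2\} \in S'_{1/2}$ and $z_1 = p_1 \times \{0\} \in -S'_0$. Verify by inspection of the local picture near the co-core $a_{k+1}$ that these basepoints also occupy the regions of $\Sigma \setminus (\alphas \cup \betas)$ demanded by the LOSS construction for $L$ after the stabilization. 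The generator $\x(\HD)$ made from intersection points on $S'_{1/2}$ is then simultaneously the LOSS generator representing $\Lm(L)$ (using the stabilization invariance of $\Lm$ from \cite[Theorem 2.11]{LOSS}) and the BRAID generator representing $t(K)$, so one can take $\psi$ to be the identity on $\HFKm(\HD)$.

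The main obstacle is the compatibility of basepoints. In the LOSS construction, $z$ and $w$ both sit on $S_{1/2}$ in the two small regions adjacent to $L \cap a_1$, whereas in the BRAID construction $w_1$ and $z_1$ sit at a single puncture point, one on each sheet of $\Sigma = S'_{1/2} \cup -S'_0$. What makes the identification possible is that, once the stabilization is performed near $L \cap a_1$ and the co-core $a_{k+1}$ is added to the basis, the two regions adjacent to the old intersection point $L \cap a_1$ become (respectively) the region on $S'_{1/2}$ containing $p_1 \times \{1/2\}$ and the region on $-S'_0$ containing $p_1 \times \{0\}$, with $\alpha_{k+1}$ and $\beta_{k+1}$ meeting transversely at the appropriate points to produce the expected small-configuration picture. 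Carrying out this local geometric check---and verifying that the LOSS generator, suitably extended across the stabilization, equals the BRAID generator component-by-component---is the technical heart of the argument.
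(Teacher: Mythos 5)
Your high-level strategy is the same as the paper's: produce a single Heegaard diagram, after one positive open book stabilization, in which one generator simultaneously represents $\Lm(L)=\Tm(K)$ and $t(K)$, the two defining diagrams differing only by an isotopy of basepoints. However, there is a genuine gap at the step where you claim that stabilizing near $L\cap a_1$ ``makes room to isotope the transverse pushoff $K$ of $L$ into a $1$-braid with respect to $(B',\pi')$.'' Being a braid is a condition at \emph{every} page: a closed curve confined to a neighborhood of an interior point of the page $S_{1/2}$ meets only the pages $S_t$ with $t$ near $1/2$, so no modification localized near $L\cap a_1$ can turn the transverse pushoff into a $1$-braid. Braiding necessarily takes place near the binding, where the pages converge and a parallel pushoff of a binding component automatically meets each page once. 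The paper supplies exactly this missing input by first isotoping $K$ to be a binding component $K'$ of a compatible open book (Lemma 6.5 of \cite{bev2}) and only then stabilizing in a neighborhood of $K'$; by Lemma 3.1 of \cite{Ve}, that neighborhood contains both a Legendrian approximation $L$ of $K'$ on a page of the stabilized open book and a $1$-braid $K''$ transversely isotopic to $K'$. Without an analogue of this step, your construction never produces a braid at all.

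Relatedly, your basis and basepoint configuration does not line up as described. In the paper the distinguished arc met once by $L$ \emph{is} the co-core of the newly attached $1$-handle, and no other basis arc is allowed to touch the relevant boundary component; this is precisely what makes the LOSS basepoint $z$, which must lie in a thin region between $\alpha_1$ and $\beta_1$ on $S_{1/2}$, isotopic across the binding circle $\partial S_{1/2}=-\partial S_0$ to the BRAID basepoint $p_1\times\{0\}$ on $-S_0$ without crossing any curves or other basepoints. In your setup $L$ meets an old interior arc $a_1$ while the puncture $p_1$ sits near the new co-core $a_{k+1}$, so the thin region adjacent to $L\cap\alpha_1$ has no reason to abut the binding circle, and the ``same diagram up to moving $z$ and $w$'' conclusion fails as stated. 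If you instead make $K$ a binding component first and let the co-core of the stabilizing handle play the role of the arc that $L$ crosses, both problems disappear and the remainder of your argument coincides with the paper's proof.
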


We later combine this with Theorem \ref{thm:braid=grid} to prove Theorem \ref{thm:LOSS_Grid_trans}.

\begin{proof}[Proof of Theorem \ref{thm:LOSS=t}]
Suppose $K$ is a transverse knot in $(Y,\xi)$. Then $K$ is transversely isotopic to a binding component $K'$ of some open book decomposition $(B,\pi)$ for $(Y,\xi)$ \cite[Lemma 6.5]{bev2}. Let $(B',\pi')$ be the result of positively stabilizing $(B,\pi)$ in a neighborhood of this binding component $K'$. As shown in \cite[Lemma 3.1]{Ve}, and illustrated on the right side of Figure \ref{fig:stabofpage}, this neighborhood contains both a Legendrian approximation $L$ of $K'$ sitting as a homologically nontrivial curve on a page of $(B',\pi')$, as well as a knot $K''$ which is a 1-braid with respect to $(B',\pi')$ and transversely isotopic to $K'$. 

\begin{figure}[!htbp]
\labellist 
\hair 2pt 
\small
\pinlabel $K'$ at 47 160
\pinlabel $K'$ at 179 169

\pinlabel $L$ at 195 166
\pinlabel $K''$ at 161 166

\endlabellist 
\begin{center}
\includegraphics[height = 6.1cm]{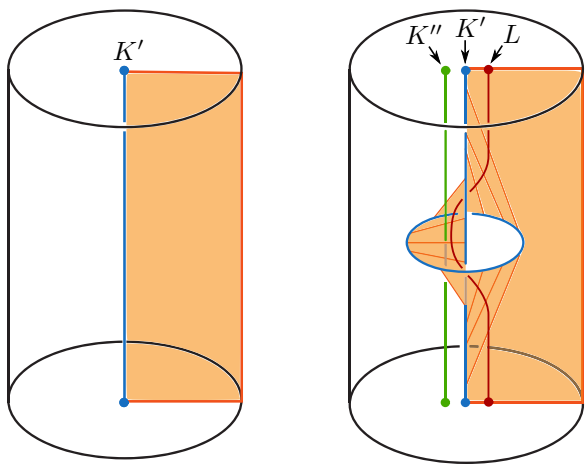}
\caption{\quad On the left, a page of $(B,\pi)$ in a neighborhood of the binding component $K'$. On the right, a page of the stabilized open book $(B',\pi')$ near $K'$. $L$ is a Legendrian approximation of $K'$ and $K''$ intersects each fiber of $\pi'$ in exactly one point.}
\label{fig:stabofpage}
\end{center}
\end{figure}

Since $K$ is transversely isotopic to $K'$, there are isomorphisms of knot Floer homology identifying $t(K)$ with $t(K')$ and $\Tm(K)$ with $\Tm(K')$. It therefore suffices to show that there is an isomorphism of knot Floer homology which identifies $t(K')$ with $\Tm(K')$. But $\Tm(K')$ is defined to be $\Lm(L)$, and $t(K')$ is defined to be $t(K'')$. Thus, to prove Theorem \ref{thm:LOSS=t}, it is enough to show that there is an isomorphism of knot Floer homology which identifies $\Lm(L)$ with $t(K'')$.

\begin{figure}[!htbp]
\labellist 
\hair 2pt 
\small
\pinlabel $K'$ at 47 154
\pinlabel $S$ at 15 82
\pinlabel $K'$ at 133 154
\pinlabel $L$ at 94 152
\pinlabel $S'$ at 101 82
\pinlabel $a_1$ at 286 75
\pinlabel $a_i$ at 232 75

\endlabellist 
\begin{center}
\includegraphics[height = 4.6cm]{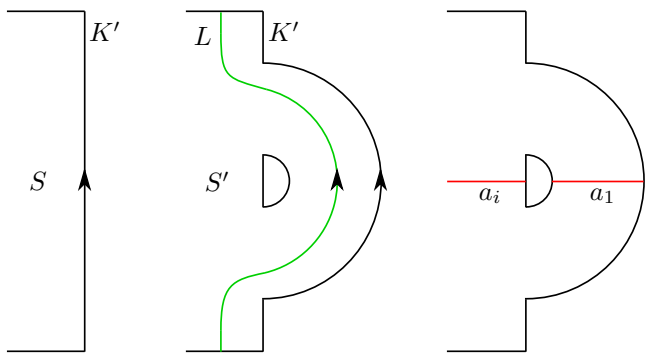}
\caption{\quad On the left, the surface $S$ near the binding component $K'$. In the middle, the stabilized surface $S'$ with the Legendrian approximation $L$ of $K'$. On the right, our choice of basis. We let $a_1$ be the co-core of the attached 1-handle and require that no other $a_i$ intersects the boundary component of $S'$ corresponding to $K'$. }
\label{fig:page}
\end{center}
\end{figure}

\begin{figure}[!htbp]
\labellist 
\hair 2pt 
\small
\pinlabel $S'_{1/2}$ at 18 15
\pinlabel $-S'_0$ at 180 15

\pinlabel $S'_{1/2}$ at 249 15
\pinlabel $-S'_0$ at 412 15

\pinlabel $z$ at 87 87
\pinlabel $w$ at 84 98

\pinlabel $z$ at 360 89
\pinlabel $w$ at 295 88

\endlabellist 
\begin{center}
\includegraphics[height = 4.6cm]{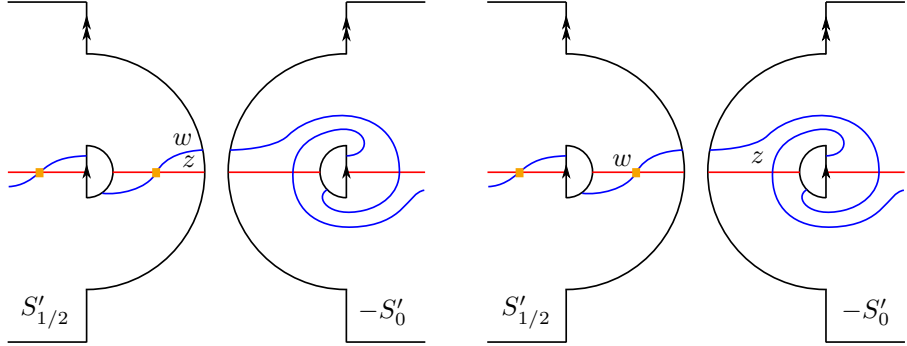}
\caption{\quad On the left, the Heegaard diagram associated to $(S',\varphi')$ and our choice of basis, as used to define the invariant $\Lm(L)$. On the right, the Heegaard diagram associated to the 1-braid $K''$ and the same basis, as used to define the invariant $t(K'')$. The arrows on the boundaries of $S'_{1/2}$ and $-S'_0$ in each case indicate how the surfaces are glued together to form the Heegaard surface. These two Heegaard diagrams only differ in that $z$ and $w$ have been moved slightly. The orange boxes on the left and the right correspond to the generators representing $\Lm(L)$ and $t(K'')$, respectively.}
\label{fig:abstractob}
\end{center}
\end{figure}

Let $(S,\varphi)$ be the abstract open book correponding to $(B,\pi)$, and let $(S',\varphi')$ be the abstract open book corresponding to the positive stabilization $(B',\pi')$. Figure \ref{fig:page} shows a portion of $S'$ near the region in which the stabilization occurred. As illustrated in that figure, we let $a_1$ be the co-core of the 1-handle attached in the stabilization, and we require that no other basis arc intersects the outermost boundary component. The left side of Figure \ref{fig:abstractob} shows the corresponding portion of the Heegaard diagram associated to $(S',\varphi')$ and this choice of basis, as defined in Subsection \ref{sub:loss_invariant}.

Now, observe that if we simply isotop the basepoint $z$ to the $-S'_0$ portion of $\Sigma$, as shown in Figure \ref{fig:abstractob}, then we obtain the Heegaard diagram associated to the transverse 1-braid $K''$ and the above basis, as described in Subsection \ref{sec:new_invt}. In other words, the Heegaard diagrams defining the invariants $\Lm(L)$ and $t(K'')$ are exactly the same (well, isotopic). Moreover, the generators representing $\Lm(L)$ and $t(K'')$ correspond to the same sets of intersection points. This completes the proof of Theorem \ref{thm:LOSS=t}. \end{proof}

\section{A Reformulation of the BRAID invariant $t$} 
\label{sec:braid_char}

Suppose that $K$ is a transverse knot in $(S^3,\xi_{std})$, braided with respect to the \emph{standard disk open book decomposition} $(U,\pi)$, where $U$ is an unknot and the fibers $\pi^{-1}(t)$ are open disks. In this section, we give an alternate characterization of $t(K)$ in terms of a filtration on the complex $\CFK^{-,2}(-S^3,K)$ induced by $-U\subset -S^3$. In Section \ref{sec:comb_char}, we show that the invariant $\theta(K)$ admits an identical formulation. We will use these reformulations in Section \ref{sec:braidgrid} to establish an equivalence between the invariants $t$ and $\theta$.

To define this filtration, we first build a special multi-pointed Heegaard diagram for the link $K\cup U\subset S^3$. Let $(S=D^2,\varphi=id)$ be the abstract open book corresponding to $(U,\pi)$. As in Subsection \ref{sub:transverse_braids}, the braid $K$ is specified by the data $(S,\{p_1,\dots,p_n\},\widehat{\varphi})$, for some lift $\widehat{\varphi}$ of $\varphi$. 
Let $a_1,\dots,a_{n-1}$ be basis arcs for $(S,\{p_1,\dots,p_n\})$ such that, for $i=1,\dots,n-1$, the arc $a_i$ separates $S$ into two disks, one of which contains the points $a_1,\dots,a_i$. Recall from Section \ref{sec:new_invt} that this basis and the data $(S,\{p_1,\dots,p_n\},\widehat{\varphi})$ together specify a multi-pointed Heegaard diagram, \[(\Sigma,\{\beta_1,\dots,\beta_{n-1}\},\{\alpha_1,\dots,\alpha_{n-1}\},\ws_K,\zs_K),\] for $K\subset -S^3$, where $\Sigma = S_{1/2}\cup -S_0$; $\alpha_i = a_i\times\{1/2\}\cup a_i\times \{0\}$; $\beta_i = b_i\times\{1/2\}\cup \widehat{\varphi}(b_i)\times \{0\}$ for a small pushoff $b_i$ of $a_i$; and $\zs_K$ and $\ws_K$ consist of the points of the forms $\{p_i\}\times\{1/2\}$ and $\{p_i\}\times\{0\}$, respectively. See Figure \ref{fig:archeeg} for an example in the case that $n=4$.

\begin{figure}[!htbp]
\labellist 
\hair 2pt 
\small
\pinlabel $S_{1/2}$ at 340 35
\pinlabel $-S_0$ at 337 3

\pinlabel $w$ at 248 55
\pinlabel $w$ at 248 78
\pinlabel $w$ at 248 98
\pinlabel $w$ at 248 119

\pinlabel $p_1$ at 72 26
\pinlabel $p_2$ at 72 47
\pinlabel $p_3$ at 72 69
\pinlabel $p_4$ at 72 90

\pinlabel $a_1$ at 91 -5
\pinlabel $a_2$ at 109 1
\pinlabel $a_3$ at 126 12

\pinlabel $U$ at 364 19

\endlabellist 
\begin{center}
\includegraphics[height = 3.7cm]{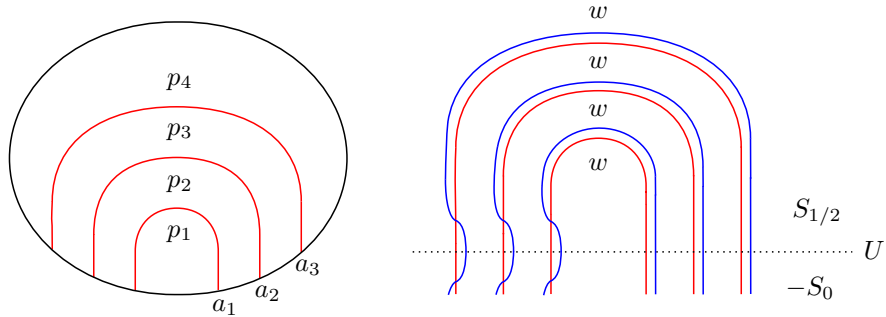}
\caption{\quad On the left, the disk $S$ together with the points $p_i$ and the basis arcs $a_i$ in the case that $n=4$. On the right, a piece of the associated Heegaard diagram near the $S_{1/2}$ portion of $\Sigma$. The $\alpha_i$ are in red, the $\beta_j$ in blue. The set $\ws_K$ consists of the points marked $w$.}
\label{fig:archeeg}
\end{center}
\end{figure}

We may think of $U$ as sitting on $\Sigma$ as the oriented boundary of $S_{1/2}$. Push the curves $\beta_1,\dots,\beta_{n-1}$ along $U$ (in the direction specified by its orientation) via a finger move in such a way that $U$ is the union of two segments, $s_1, s_2$, where $s_1$ only intersects these pushed curves and $s_2$ only intersects the $\alphas$ curves. In an abuse of notation, we denote these pushed curves also by $\beta_1,\dots,\beta_{n-1}$; see Figure \ref{fig:H1}. Let \[\HD_1 = (\Sigma, \{\beta_1,\dots,\beta_{n-1}\},\{\alpha_1,\dots,\alpha_{n-1}\},\ws_K,\zs_K)\] denote the resulting Heegaard diagram. Had we performed the above finger move just inside the $-S_0$ portion of $\Sigma$, we would have obtained an isotopic Heegaard diagram $\HD_1'$, as shown on the right in Figure \ref{fig:H1}. Note that $\HD_1'$ is the diagram associated, in the manner of Section \ref{sec:new_invt}, to the above basis and the data $(S,\{p_1,\dots,p_n\},\widehat{\varphi}')$, where $\widehat{\varphi}'$ is a lift of $\varphi$ which differs from $\widehat{\varphi}$ by an isotopy. The generator $\x_1'$ of $\CFKm(\HD_1')$ whose components are all contained in $S_{1/2}$ therefore represents the transverse invariant $t(K)$. Let $\x_1$ denote the corresponding generator of  $\CFKm(\HD_1)$. It follows that $[\x_1]=t(K)\in \HFKm(\HD_1)$. 

\begin{figure}[!htbp]
\labellist 
\hair 2pt 
\small
\pinlabel $w$ at 78 89
\pinlabel $w$ at 78 110
\pinlabel $w$ at 78 129
\pinlabel $w$ at 78 151

\pinlabel $U$ at -7 56

\pinlabel $w$ at 293 93
\pinlabel $w$ at 293 113
\pinlabel $w$ at 293 134
\pinlabel $w$ at 293 154

\pinlabel $S_{1/2}$ at 198 75
\pinlabel $-S_0$ at 198 36

\endlabellist 
\begin{center}
\includegraphics[height = 4.6cm]{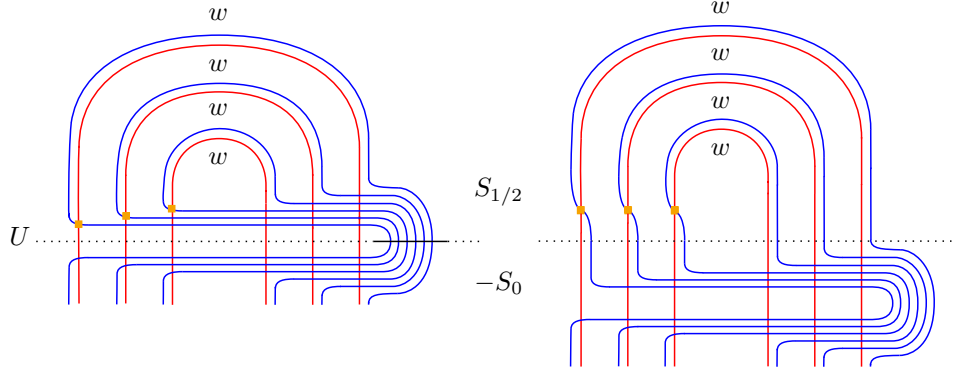}
\caption{\quad On the left, a portion of the diagram $\HD_1$. The solid segment of $U$ is meant to indicate $s_1$. On the right, a portion of the diagram $\HD_1'$. The orange boxes on the left and right are the components of the generators $\x_1$ and $\x_1'$, respectively.}
\label{fig:H1}
\end{center}
\end{figure}

To encode the binding $U$, we place basepoints $z_1,w_1,z_2,w_2$ along $ U'$ in that order so that $z_1,w_1$ are the common endpoints of $s_1,s_2$ and $z_2,w_2$ are contained in $s_2$. Let $\alpha_n, \beta_n,\alpha_{n+1},\beta_{n+1}$ be curves such that $\alpha_n$ encircles $z_1,w_1$; $\beta_n$ encircles $w_1,z_2$; $\alpha_{n+1}$ encircles $z_2,w_2$; and $\beta_{n+1}$ encircles $w_2,z_1$, as shown in Figure \ref{fig:H5}. Let $\alphas=\{\alpha_1,\dots,\alpha_{n+1}\}$, $\betas= \{\beta_1,\dots,\beta_{n+1}\}$, $\zs_U = \{z_1,z_2\}$ and $\ws_U = \{w_1,w_2\}$. Then $(\Sigma,\betas,\alphas, \ws_K\cup\zs_U,\zs_K\cup\ws_U)$ encodes the link $K\cup -U\subset -S^3$ and $\HD_3=(\Sigma,\betas,\alphas, \ws_K,\zs_K\cup\ws_U)$ is a multi-pointed Heegaard diagram for $K\subset -S^3$ with free basepoints $w_1,w_2\in\ws_U$.

\begin{figure}[!htbp]
\labellist 
\hair 2pt 
\small
\pinlabel $z_1$ at 174 49
\pinlabel $w_1$ at 190 49
\pinlabel $z_2$ at 202 49
\pinlabel $w_2$ at 216 49

\pinlabel $w$ at 78 62
\pinlabel $w$ at 78 83
\pinlabel $w$ at 78 103
\pinlabel $w$ at 78 123

\pinlabel $S_{1/2}$ at 195 90
\pinlabel $-S_0$ at 195 5

\endlabellist 
\begin{center}
\includegraphics[height = 4.4cm]{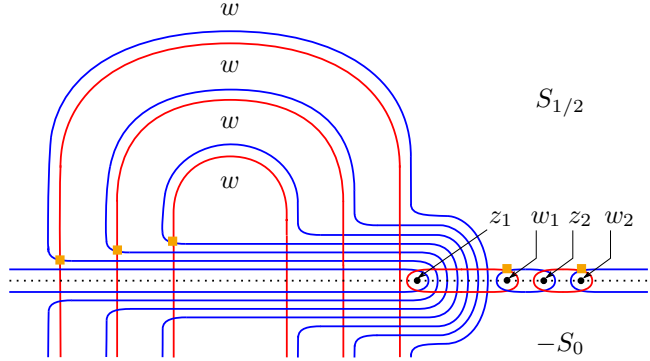}
\caption{\quad A portion of the diagram $\HD_3$. The orange boxes are the components of the generator $\x_3$.}
\label{fig:H3}
\end{center}
\end{figure}

As discussed in Subsection \ref{sub:hfk}, the Alexander grading $A_{-U}$ on $\CFK^{-,2}(\HD_3)$ is specified, up to an overall shift, by the fact that \[A_{-U}(\mathbf{x}) - A_{-U}(\mathbf{y}) = n_{\zs_{U}}(\phi)-n_{\ws_{U}}(\phi)\] for any two generators $\x,\y $ and any $\phi \in \pi_2(\bf{x}, \bf{y})$. This grading induces a filtration \begin{equation}\label{eqn:filt2}\emptyset = \mscr{F}^{-U}_m(\HD_3)\subset\mscr{F}^{-U}_{m+1}(\HD_3)\subset\dots\subset\mscr{F}^{-U}_n(\HD_3)=\rm{CFK}^{-,2}(\HD_3),\end{equation} where $\mscr{F}^{-U}_{k}(\HD_3)$ is generated by $\{\x\in \mathbb{T}_{\alpha}\cup \mathbb{T}_{\beta}\,|\,A_{-U}(\x)\leq k\}.$ The bottommost nontrivial filtration level $\mscr{F}^{-U}_{bot}(\HD_3)$ may be characterized as follows.

\begin{lemma}
\label{lem:afiltration}
A generator of $\CFK^{-,2}(\HD_3)$ is in $\mscr{F}^{-U}_{bot}(\HD_3)$ if and only all of its components are contained in the region $S_{1/2}$.
\end{lemma}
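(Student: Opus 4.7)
The strategy is to apply the Alexander grading difference formula, which gives, for any $\phi\in\pi_2(\x,\y)$,
\[ A_{-U}(\x) - A_{-U}(\y) = n_{\zs_U}(\phi) - n_{\ws_U}(\phi). \]
I first identify the reference generator $\x_3$ depicted in Figure~\ref{fig:H3}, whose components all lie in $S_{1/2}$; namely, $\x_3$ consists of the unique pushoff intersections $x_i\in\alpha_i\cap\beta_i\cap S_{1/2}$ for $i=1,\ldots,n-1$, together with two components among the intersections of the local curves $\alpha_n,\beta_n,\alpha_{n+1},\beta_{n+1}$ on the $S_{1/2}$-side of $U$. The lemma follows once I prove (i) any generator $\y$ with all components in $S_{1/2}$ satisfies $A_{-U}(\y)=A_{-U}(\x_3)$, and (ii) any generator $\y$ with some component in $-S_0$ satisfies $A_{-U}(\y)>A_{-U}(\x_3)$.

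For (i), any two generators entirely on the $S_{1/2}$-side differ only in their choice of matching and intersection side among the four local curves near $U$. The transition between two such generators is realized by a rectangular domain lying wholly on the $S_{1/2}$-side of $U$; such a domain is disjoint from $\{z_1,z_2,w_1,w_2\}\subset U$ and therefore contributes zero to the Alexander grading difference.

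For (ii), I break the analysis into two cases. In a \emph{basis move}, we replace $x_i$ by some $y_i\in\alpha_i\cap\beta_i\cap(-S_0)$. The relevant bigon from $y_i$ to $x_i$ has boundary crossing $U$ once in $s_2$ (at an endpoint of $a_i$) and once in $s_1$ (at an endpoint of $b_i$); with its side chosen appropriately, its interior contains precisely one basepoint from $\zs_U=\{z_1,z_2\}$ and none from $\ws_U=\{w_1,w_2\}$. In a \emph{link move}, we switch a link component from the $S_{1/2}$-side to the $-S_0$-side. The corresponding bigon is a local "crescent" in the windmill configuration of $\alpha_n,\beta_n,\alpha_{n+1},\beta_{n+1}$, and each such crescent contains exactly one of $z_1$ or $z_2$ and no $w$-basepoint. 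In either case, the bigon contributes strictly positively to $n_{\zs_U}-n_{\ws_U}$.

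The main technical obstacle is confirming the basepoint multiplicities for the basis move, because the finger moves used to construct $\HD_3$ cause the pushed $\beta_i$ curves to enter the region encircled by $\alpha_n$, creating additional intersections. The resolution is that one can always choose the bigon on the side of $U$ avoiding $w_1$ and $w_2$; a local inspection near the finger move confirms that the extra intersections of $\alpha_n$ with $\beta_i$ do not alter the multiplicities at $z_1,z_2,w_1,w_2$ themselves. Once this local analysis is in hand, the inequality $A_{-U}(\y)-A_{-U}(\x_3)\geq 1$ in case (ii) together with the equality in case (i) yields the lemma.
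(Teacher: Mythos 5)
Your key local computation --- that a bigon joining an intersection point of $\alpha_i\cap\beta_j$ in $-S_0$ to one in $S_{1/2}$ crosses $U$ and picks up exactly one basepoint of $\zs_U$ and none of $\ws_U$, so that each component pushed into $-S_0$ raises $A_{-U}$ by one --- is exactly the engine of the paper's proof. But there is a genuine gap in how you reduce an arbitrary generator to this computation. Your claim (i) rests on the premise that a generator lying entirely in $S_{1/2}$ is determined by ``a choice of matching and side among the four local curves near $U$.'' This is false: the finger move you yourself flag creates intersection points of $\alpha_i$ with $\beta_j$ for $i\neq j\leq n-1$ inside $S_{1/2}$ near $U$, and the small circles $\alpha_n,\beta_n,\alpha_{n+1},\beta_{n+1}$ also meet the pushed basis curves, so there are many more all-$S_{1/2}$ generators than you account for. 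Likewise in claim (ii), your ``basis move'' only swaps the diagonal points $x_i\in\alpha_i\cap\beta_i$, whereas in $-S_0$ the curve $\widehat\varphi(b_j)\times\{0\}$ can meet $a_i\times\{0\}$ many times for $i\neq j$, and a generator with components in $-S_0$ need not be obtained from an all-$S_{1/2}$ generator by independently toggling components of a fixed matching. You never show that every generator is connected to $\x_3$ by a sequence of the two moves you describe, so the strict inequality is verified only on a proper subfamily. A smaller but real issue: a domain ``lying wholly on the $S_{1/2}$-side'' is not automatically disjoint from $\zs_U\cup\ws_U$, since those basepoints sit on $U$ itself and are encircled by $\alpha_n,\dots,\beta_{n+1}$; for instance the lens between $\alpha_n$ and $\beta_n$ lies partly in $S_{1/2}$ and contains $w_1$, so this needs to be checked rather than asserted.

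The paper closes precisely this gap by organizing the argument the other way around: it first shows that \emph{all} generators lying entirely in $-S_0$ have the same grading $A_{max}$, by joining any two of them along $\alpha$- and $\beta$-arcs to form closed curves in the disk $-S_0$ that bound a domain supported in $-S_0$ and hence disjoint from $\zs_U\cup\ws_U$; it then inducts on the number of components in $S_{1/2}$, replacing one component at a time, on an \emph{arbitrary} pair $\alpha_i\cap\beta_j$, by its mirror across $U$ via a bigon through $z_1$. This yields $A_{-U}(\x)=A_{max}-\#\{\text{components of }\x\text{ in }S_{1/2}\}$ for every generator, from which both of your claims (i) and (ii) follow simultaneously. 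If you want to keep your architecture, you would need to replace your case analysis with a statement and proof of this kind that applies to all pairs $(i,j)$ and all matchings.
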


\begin{proof}[Proof of Lemma \ref{lem:afiltration}]
We will prove that the $A_{-U}$ grading of a generator is, up to an overall shift, the number of its components contained in $-S_0$, from which Lemma \ref{lem:afiltration} will follow. First, we show that if $\x$ and $\y$ are generators whose components are all contained in $-S_0$, then $A_{-U}(\x)=A_{-U}(\y)$. 

Suppose $\x$ and $\y$ are as in the previous sentence. Since the $\alphas$ and $\betas$ curves each intersect $-S_0$ in a single arc, we can connect every component of $\x$ to exactly one component of $\y$ along an $\alphas$ arc contained in $-S_0$; likewise, we can connect every component of $\x$ to exactly one component of $\y$ along an $\betas$ arc contained in $-S_0$. The union of these arcs is a collection of closed curves in $-S_0$. Since $-S_0$ is just a disk, this collection of curves bounds a domain contained entirely in $-S_0$. By construction, this domain connects $\x$ and $\y$ and is disjoint from the basepoints in $\zs_U\cup\ws_U$. It follows that $A_{-U}(\x) = A_{-U}(\y)$. 

Let $A_{max}$ denote the $A_{-U}$ grading of generators whose components are all contained in $-S_0$, and suppose we know that for any generator $\y$ with exactly $k$ components in $S_{1/2}$, $A_{-U}(\y) = A_{max}-k$. Now, suppose that $\x$ has exactly $k+1$ components in $S_{1/2}$. Let $x$ be one such component, and suppose that it lies on $\alpha_i$ and $\beta_j$. Let $x'$ be the intersection point on $-S_0$ between the same two curves that is the reflection of $x$ across $U$. Let $\y$ be the generator obtained from $\x$ by replacing $x$ with $x'$. Then $A_{-U}(\y) = A_{max}-k$ by the above hypothesis. There is a bigon from $\y$ to $\x$ which passes once through $z_1$ and avoids the other basepoints in $\zs_U\cup\ws_U$, as shown in Figure \ref{fig:bottomfilt2}. This implies that $A_{-U}(\y)=A_{-U}(\x) + 1$. Hence, $A_{-U}(\x) = A_{max}-(k+1)$. By induction, we have shown that if a generator has exactly $j$ components in $S_{1/2}$, then its $A_{-U}$ grading is $A_{max}-j$ for any $j$. It follows that if a generator has exactly $j$ components in $-S_{0}$, then its $A_{-U}$ grading is $A_{max}-n-1+j$, which is what we set out to prove.
 \end{proof}

\begin{figure}[!htbp]
\labellist 
\hair 2pt 
\small
\pinlabel $S_{1/2}$ at 200 48
\pinlabel $-S_0$ at 200 5
\endlabellist 
\begin{center}
\includegraphics[height = 2.1cm]{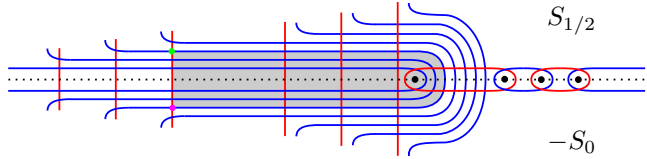}
\caption{\quad A portion of $\HD_3$ near $U$. The components $x'$ and $x$ of $\y$ and $\x$ are shown in pink and green, respectively. The shaded region is the domain of a bigon from $\y$ to $\x$. }
\label{fig:bottomfilt2}
\end{center}
\end{figure}

Let $\x_3$ denote the generator of $\CFK^{-,2}(\HD_3)$ consisting of the components of $\x_1$ together with the intersection points on $\alpha_n\cap\beta_n$ and $\alpha_{n+1}\cap \beta_{n+1}$ with the smallest Maslov grading contributions. Since the components of $\x_3$ are all contained in $S_{1/2}$, Lemma \ref{lem:afiltration} implies that $\x_3$ is in $\mscr{F}^{-U}_{bot}(\HD_3)$.

\begin{lemma}
\label{lem:topmaslov}
The Maslov grading of $\x_3$ is greater than that of any other generator in $\mscr{F}^{-U}_{bot}(\HD_3)$.\end{lemma}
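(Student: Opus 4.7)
The plan is to enumerate the generators of $\mscr{F}^{-U}_{bot}(\HD_3)$ explicitly and then use small bigons near the binding to compare their Maslov gradings with that of $\x_3$.

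To enumerate the generators, I would invoke Lemma~\ref{lem:afiltration}: any $\y \in \mscr{F}^{-U}_{bot}(\HD_3)$ has all of its components on $S_{1/2}$. Because each $b_i$ is a small pushoff of $a_i$ and the arcs $a_1,\dots,a_{n-1}$ are pairwise disjoint on $S$, on $S_{1/2}$ the curves $\alpha_i$ and $\beta_j$ intersect only when $i = j$, and then in a single point. This forces the components of $\y$ on $\alpha_1,\dots,\alpha_{n-1}$ and $\beta_1,\dots,\beta_{n-1}$ to agree with those of $\x_1$ (and hence of $\x_3$) there. The only remaining freedom is the choice of components on $\alpha_n\cap\beta_n$ and $\alpha_{n+1}\cap\beta_{n+1}$ lying in $S_{1/2}$, and a direct inspection of the small-circle configurations around $z_1,w_1,z_2,w_2$ lists these finitely many possibilities.

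For each alternative $\y \neq \x_3$ in $\mscr{F}^{-U}_{bot}(\HD_3)$, I would produce a class $\phi \in \pi_2(\y,\x_3)$ whose domain is a disjoint union of small bigons, one for each coordinate (from among the two small-circle configurations) in which $\y$ differs from $\x_3$. These bigons are supported in a neighborhood of the binding $U$ and thus avoid every basepoint in $\ws_K$. By the defining property of $\x_3$, which picks in each small configuration the intersection point with the smallest local Maslov contribution, each such bigon has Maslov index one and encloses exactly one basepoint from $\ws_U$, namely one of $w_1$ or $w_2$. Applying the Maslov grading formula
\[
M(\y) - M(\x_3) = \mu(\phi) - 2n_{\ws_K\cup\ws_U}(\phi)
\]
then yields $M(\y) - M(\x_3) = r - 2r = -r$, where $r \ge 1$ is the number of coordinates in which $\y$ differs from $\x_3$. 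Therefore $M(\x_3) > M(\y)$ for every $\y \neq \x_3$ in $\mscr{F}^{-U}_{bot}(\HD_3)$.

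The main obstacle will be the careful geometric bookkeeping in the second step: verifying that each bigon connecting an alternative generator to $\x_3$ encloses precisely one $w$-basepoint from $\ws_U$. This requires drawing the small circles $\alpha_n,\beta_n,\alpha_{n+1},\beta_{n+1}$ around $z_1,w_1,z_2,w_2$ with care, and unpacking the convention by which $\x_3$ selects ``smallest Maslov grading contributions'' in each local configuration. Once this local picture is established, the grading comparison is a direct application of the Heegaard Floer Maslov grading formula.
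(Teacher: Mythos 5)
Your enumeration of $\mscr{F}^{-U}_{bot}(\HD_3)$ rests on a false premise, and this is where the argument breaks down. It is true that the basis arcs $a_1,\dots,a_{n-1}$ are pairwise disjoint and that each $b_i$ meets $a_i$ in a single point, but $\HD_3$ is not the diagram built directly from those arcs: it is built from $\HD_1$, which is obtained by pushing every $\beta_i$ along the binding $U$ by a finger move so that $U$ decomposes into the two segments $s_1$ and $s_2$. That finger move drags each $\beta_i$ past the boundary endpoints of the other basis arcs and creates many intersection points of $\alpha_j$ with $\beta_i$, $j\neq i$, inside $S_{1/2}$ near $U$ (this is visible in Figure \ref{fig:H1}); the small circles $\alpha_n,\beta_n,\alpha_{n+1},\beta_{n+1}$ likewise meet curves other than their partners. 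Consequently the bottom filtration level contains many generators whose components on $\alpha_1,\dots,\alpha_{n-1}$ do \emph{not} coincide with those of $\x_3$, and these are exactly the generators your list omits. For such a generator $\y$ there is no disjoint union of small bigons near the binding joining it to $\x_3$, so the second half of your argument never gets started. The paper's own proof is structured around precisely this difficulty: it must treat the cases $(\y)_i=(\y)^k$ with $k\neq i$, and the comparison domains it produces are rectangles and hexagons as well as bigons, generated by a recursive one-curve-at-a-time replacement procedure rather than a single global domain.

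Two further points of bookkeeping would also need repair even after fixing the enumeration. First, in $\HD_3$ the circles $\alpha_n$ and $\beta_n$ are not yet a ``small configuration around $w_1$'' in the sense of Subsection \ref{sub:hfk}: $\alpha_n$ encircles $z_1$ and $w_1$ while $\beta_n$ encircles $w_1$ and $z_2$ (the honest small configurations appear only after the isotopy producing $\HD_2$), so the local bigon picture you describe is not the one present in $\HD_3$. Second, since $\HD_3=(\Sigma,\betas,\alphas,\ws_K,\zs_K\cup\ws_U)$, the basepoints entering the Maslov grading formula are those of $\zs_K\cup\ws_U$, not $\ws_K\cup\ws_U$; the paper arranges for all of its comparison domains to be disjoint from $\zs_K\cup\ws_U$ and extracts the strict inequality from $\mu(\phi)\geq 1$ alone, rather than from domains crossing $\ws_U$ once as in your computation. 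The correct strategy is the recursive one: given any $\y$ in the bottom filtration level, replace its components curve by curve (from $\alpha_{n+1}$ down to $\alpha_1$) by those of $\x_3$, exhibiting at each stage a positive domain in $S_{1/2}$ of Maslov index at least one avoiding $\zs_K\cup\ws_U$, so that the Maslov grading strictly increases at every nontrivial step.
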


\begin{proof}[Proof of Lemma \ref{lem:topmaslov}]
Suppose $\y$ is a generator in $\mscr{F}^{-U}_{bot}(\HD_3)$. Let $(\y)_j$ and $(\y)^j$ denote the components of $\y$ on $\alpha_j$ and $\beta_j$, respectively. We recursively construct a sequence \[\y=\y_{n+1},\y_n,\dots,\y_{1}=\x_3\] of generators in $\mscr{F}^{-U}_{bot}(\HD_3)$ such that 
\begin{itemize}
\item $(\y_i)_j = (\x_3)_j$ for $i=n,n-1,\dots,1$ and $j=n+1,n,\dots,i$,
\item $M(\y_{i+1})\leq M(\y_i)$, with equality if and only if $\y_{i}=\y_{i+1}$.
\end{itemize}
This will prove Lemma \ref{lem:topmaslov}.

We first construct $\y_n$ from $\y=\y_{n+1}$. If $(\y)_n =(\x_3)_n$, then it must also be that $(\y)_{n+1} = (\x_3)_{n+1}$, and we simply let $\y_n = \y$. Otherwise, there are two cases. For the first, suppose $(\y)_n = (\y)^{n+1}$.  It follows that $(\y)_{n+1}=(\y)^n$. Thus, we let $\y_n$ be the generator obtained from $\y$ by replacing the components $(\y)_n$ and $(\y)_{n+1}$ with $(\x_3)_n$ and $(\x_3)_{n+1}$. Note that there is a disk $\phi\in\pi_2(\y_n,\y)$ whose domain is a square as indicated on the left in Figure \ref{fig:maslovbottom}. This disk has Maslov index one if no other components of $\y$ are contained in its domain, and greater than one otherwise (see the formula for Maslov index in \cite{Li}). Since $\phi$ avoids the basepoints in $\zs_K\cup\ws_U$, $M(\y_n)-M(\y) = \mu(\phi) - 2n_{\zs_K\cup\ws_U}(\phi) \geq 1$. For the second case, suppose $(\y)_n = (\y)^i$ for some $i\neq n,n+1$. Then $(\y)^n=(\y)_j$ for some $j\neq n$. Let $y$ be the intersection point on $\alpha_j\cap \beta_i$ nearest $(\y)^n$ along $\alpha_j$. Let $\y_n$ be the generator obtained from $\y$ by replacing the components $(\y)^n$, $(\y)_n$ and $(\y)_{n+1}$ with $y$, $(\x_3)_n$ and $(\x_3)_{n+1}$. There is a disk $\phi\in\pi_2(\y_n,\y)$ whose domain is a hexagon as indicated on the right in Figure \ref{fig:maslovbottom}. This disk has Maslov index at least one, as above, and avoids the basepoints in $\zs_K\cup\ws_U$, which implies that $M(\y_n)-M(\y) \geq 1$. Observe that $\y_n$ satisfies the desired properties.

\begin{figure}[!htbp]
\labellist 
\hair 2pt 
\small
\pinlabel $w$ at 78 62
\pinlabel $w$ at 78 83
\pinlabel $w$ at 78 102
\pinlabel $w$ at 78 42

\pinlabel $w$ at 319 62
\pinlabel $w$ at 319 83
\pinlabel $w$ at 319 102
\pinlabel $w$ at 319 42

\pinlabel $S_{1/2}$ at 180 70
\pinlabel $S_{1/2}$ at 420 70

\endlabellist 
\begin{center}
\includegraphics[height = 4cm]{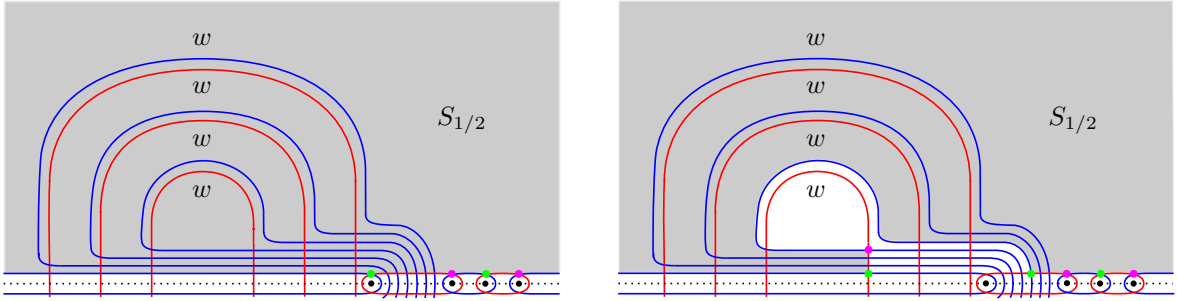}
\caption{\quad On the left, a rectangle connecting $\y_n$ to $\y$. On the right, a hexagon connecting $\y_n$ to $\y$. The pink and green dots reflect the components of $\y_n$ and $\y$, respectively, where these generators differ.}
\label{fig:maslovbottom}
\end{center}
\end{figure}

Suppose we have constructed $\y_{n+1},\y_n,\dots,\y_{i+1}$ with the desired properties. We define $\y_{i}$ from $\y_{i+1}$ in a manner very similar to the way we defined $\y_n$ from $\y$. If $(\y_{i+1})_i = (\x_3)_i$, then we set $\y_i=\y_{i+1}$. Otherwise, there are two cases to consider. For the first, suppose $(\y_{i+1})_i = (\y_{i+1})^i$. Let $\y_i$ be the generator obtained from $\y_{i+1}$ by replacing the component $(\y_{i+1})_i$ with $(\x_3)_i$. There is a disk $\phi\in\pi_2(\y_i,\y_{i+1})$ whose domain is a bigon as indicated on the left in Figure \ref{fig:maslovbottom2}. This disk has Maslov index at least one and avoids the basepoints in $\zs_K\cup\ws_U$, so $M(\y_i)-M(\y_{i+1}) \geq 1$. For the second case, suppose $(\y_{i+1})_i = (\y_{i+1})^k$ for some $k\neq i$. Then $(\y_{i+1})^i=(\y_{i+1})_\ell$ for some $\ell\neq i$. Let $y$ be the intersection point on $\alpha_\ell\cap \beta_k$ nearest $(\y_{i+1})^i$ along $\alpha_\ell$. Let $\y_i$ be the generator obtained from $\y_{i+1}$ by replacing the components $(\y_{i+1})^i$ and $(\y_{i+1})_i$ with $y$ and $(\x_3)_i$. There is a disk $\phi\in\pi_2(\y_{i},\y_{i+1})$ whose domain is a square as indicated on the right in Figure \ref{fig:maslovbottom2}. This disk has Maslov index at least one and avoids the basepoints in $\zs_K\cup\ws_U$, which implies that $M(\y_{i})-M(\y_{i+1}) \geq 1$. Then $\y_i$ has the desired properties. We proceed in this manner to construct the above sequence from $\y$ to $\x_3$.

\begin{figure}[!htbp]
\labellist 
\hair 2pt 
\small
\pinlabel $w$ at 78 62
\pinlabel $w$ at 78 83
\pinlabel $w$ at 78 102
\pinlabel $w$ at 78 42

\pinlabel $w$ at 319 62
\pinlabel $w$ at 319 83
\pinlabel $w$ at 319 102
\pinlabel $w$ at 319 42
\pinlabel $S_{1/2}$ at 180 70
\pinlabel $S_{1/2}$ at 420 70
\endlabellist 
\begin{center}
\includegraphics[height = 3.25cm]{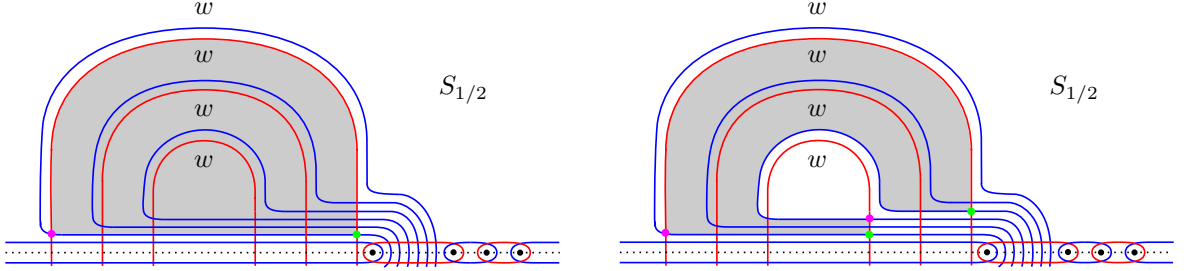}
\caption{\quad On the left, a bigon connecting $\y_i$ to $\y_{i+1}$. On the right, a rectangle connecting $\y_i$ to $\y_{i+1}$. The pink and green dots reflect the components of $\y_i$ and $\y_{i+1}$, respectively, where these generators differ. }
\label{fig:maslovbottom2}
\end{center}
\end{figure}
\end{proof}

In particular, $[\x_3]$ generates the summand $H_{top}(\mscr{F}^{-U}_{bot}(\HD_3))$ of $H_{*}(\mscr{F}^{-U}_{bot}(\HD_3))$ in the top Maslov grading. We may therefore characterize the class $[\x_3]$ more invariantly as follows.

\begin{proposition}
\label{prop:bottomfiltrationa} Let $b = \min\{j\,|\,H_*(\mscr{F}^{-U}_j(\HD_3)) \neq 0\},$ and let $H_t(\mscr{F}^{-U}_{b}(\HD_3))$ denote the summand of $H_*(\mscr{F}^{-U}_{b}(\HD_3))$ in the top Maslov grading. Then $H_t(\mscr{F}^{-U}_{b}(\HD_3))$ has rank one and is generated by $[\x_3].$ \qed

\end{proposition}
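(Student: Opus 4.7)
The plan is to combine Lemmas \ref{lem:afiltration} and \ref{lem:topmaslov} with a direct verification that $\x_3$ is a $\partial^-$-cycle. By Lemma \ref{lem:afiltration}, $\mscr{F}^{-U}_{bot}(\HD_3)$ is spanned by those tuples whose components all lie in $S_{1/2}$, and by Lemma \ref{lem:topmaslov}, $\x_3$ is the unique such tuple at the maximal Maslov grading $M(\x_3)$. In particular, the $M(\x_3)$-graded piece of $\mscr{F}^{-U}_{bot}(\HD_3)$ is one-dimensional and spanned by $\x_3$, and nothing in this subcomplex sits above $\x_3$ in Maslov grading.

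Granting for the moment that $\partial^-(\x_3) = 0$, the conclusion will follow quickly. Since no generator of $\mscr{F}^{-U}_{bot}(\HD_3)$ lies in Maslov grading $M(\x_3)+1$, $\x_3$ cannot be a $\partial^-$-boundary within the subcomplex, so $[\x_3]$ is a nonzero class in $H_*(\mscr{F}^{-U}_{bot}(\HD_3))$. This forces $b = bot$; and because the chain group is one-dimensional at Maslov grading $M(\x_3)$ and no higher Maslov grading is attained, $H_t(\mscr{F}^{-U}_{b}(\HD_3))$ will have rank one and be generated by $[\x_3]$.

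The remaining step is to verify that $\x_3$ is a cycle. Since $\partial^-$ respects the $A_{-U}$-filtration, any disk contributing to $\partial^-(\x_3)$ must target $\mscr{F}^{-U}_{bot}(\HD_3)$ in Maslov grading $M(\x_3)-1$. I plan to partition the candidate disks into two types: (i) those whose domain is supported away from the four small curves $\alpha_n,\beta_n,\alpha_{n+1},\beta_{n+1}$ near the binding $U$, and (ii) those whose domain meets at least one of these small curves. Type (i) disks correspond bijectively to $\partial^-$-disks of $\CFKm(\HD_1)$ emanating from $\x_1$, and their total contribution vanishes because $\x_1$ is itself a cycle in $\CFKm(\HD_1)$ representing $t(K)$. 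Type (ii) disks can be enumerated by direct inspection of the explicit local picture near $U$: each candidate either crosses a basepoint in $\ws_K\cup\ws_U$ (hence is excluded from $\partial^-$), crosses a point of $\zs_U$ (hence lowers $A_{-U}$ and exits $\mscr{F}^{-U}_{bot}$), or has Maslov index $\neq 1$ due to the minimal choice of $x_n$ and $x_{n+1}$.

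The main obstacle will be the case analysis in this third step. The Maslov-minimal choice of $x_n$ and $x_{n+1}$ is precisely what one expects to make the enumeration close off: any local bigon or rectangle between the small curves that would lower the Maslov grading by one should be forced either through a basepoint or past $U$. Once this local check is completed, the three-paragraph structure above delivers the proposition.
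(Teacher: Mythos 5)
Your skeleton is exactly the paper's: the proposition is meant to follow directly from Lemmas \ref{lem:afiltration} and \ref{lem:topmaslov} (the paper gives no separate proof, leaving the fact that $\x_3$ is a cycle implicit), and your first two paragraphs reproduce that deduction correctly. The one place you go beyond the paper is the explicit verification that $\partial^-\x_3=0$, and that is where your argument has a genuine soft spot. Your type (i) step claims that disks supported away from the four small curves correspond bijectively to the disks contributing to $\partial^-\x_1$ in $\CFKm(\HD_1)$, so that their contributions cancel ``because $\x_1$ is a cycle.'' But a disk contributing to $\partial^-\x_1$ in $\HD_1$ is under no obligation to avoid the neighborhood of $U$ where the small curves and the points $z_1,w_1,z_2,w_2$ are later inserted; such a disk has no type (i) counterpart in $\HD_3$, so the mod-$2$ cancellation responsible for $\partial^-\x_1=0$ need not restrict to a cancellation among your type (i) disks. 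As stated, this step does not close.

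The repair is to replace cancellation by emptiness, which is also what the paper is implicitly relying on: the standard local positivity argument (the same one that shows $\x(\HD)$ is a cycle in Section \ref{sec:new_invt}, and the LOSS class before it) shows there are \emph{no} nonconstant nonnegative domains of Maslov index one emanating from a generator all of whose components lie in $S_{1/2}$ that miss the basepoints of $\ws_K$, so the type (i) count is zero because it is empty, not because it cancels. Your type (ii) analysis is essentially right and finishes the job: local domains leaving the two small-circle components of $\x_3$ either cover $w_1$ or $w_2$ (killed in $\CFK^{-,2}$), cover a point of $\zs_U$ (impossible, since the target would have $A_{-U}$ strictly below the bottommost nontrivial filtration level), or fail to have Maslov index one. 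With that substitution your proof coincides with the paper's.
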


\begin{remark}
Though we do not prove it here, one can actually show that \[H_*(\mscr{F}^{-U}_{b}(\HD_3))\cong \F[U_1,\dots,U_l]\otimes_{\F} V\otimes_{\F} V,\] where $U_1,\dots,U_l$ are formal variables corresponding to the $l$ components of the link $K$, and $V=\F_{(0,0)}\oplus\F_{(-1,0)}$, where the subscripts indicate the $(M,A)$ gradings of the summands. 
\end{remark}

Below, we relate the class $[\x_3]$ to $t(K)$ in two steps. 

Let $\HD_2 = (\Sigma,\betas',\alphas', \ws_K,\zs_K\cup\ws_U)$ be the diagram obtained from $\HD_3$ by isotoping $\alpha_n$, $\beta_n$, $\alpha_{n+1}$ and $\beta_{n+1}$ across $z_1$, $z_2$, $z_2$ and $z_1$, respectively, so that the resulting curves $\alpha_n', \beta_n'$ form a small configuration around $w_1$ and $\alpha_{n+1}', \beta_{n+1}'$ form a small configuration around $w_2$, in the sense of Subsection \ref{sub:hfk}. We will assume that $\alpha_i'$ and $\alpha_i$ are related by a small Hamiltonian isotopy for $i\neq n,n+1$, and likewise for $\beta_i'$ and $\beta_i$. Let $\x_2$ be the generator of $\rm{CFK}^{-,2}(\HD_2)$ obtained from $\x_3$ by replacing the components of $\x_3$ on $\alpha_n\cap\beta_n$ and $\alpha_{n+1}\cap \beta_{n+1}$ by the intersection points on $\alpha_n'\cap\beta_n'$ and $\alpha_{n+1}'\cap \beta_{n+1}'$ with the smallest Maslov grading contributions and replacing all other components of $\x_3$ by the corresponding intersection points between the $\alphas'$ and $\betas'$ curves, as shown in Figure \ref{fig:H2}.

\begin{figure}[!htbp]
\labellist 
\hair 2pt 
\small
\pinlabel $w$ at 78 62
\pinlabel $w$ at 78 83
\pinlabel $w$ at 78 102
\pinlabel $w$ at 78 42
\pinlabel $S_{1/2}$ at 180 70

\endlabellist 
\begin{center}
\includegraphics[height = 3.8cm]{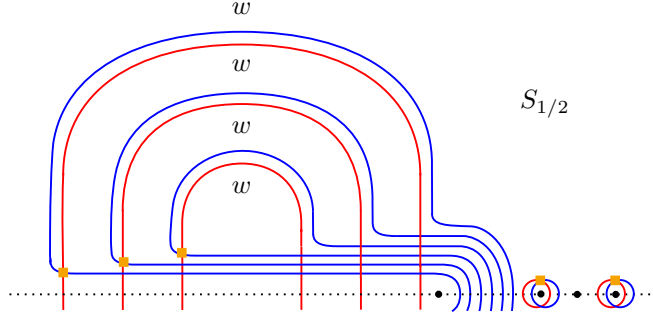}
\caption{\quad A portion of $\HD_2$ near $S_{1/2}$. The orange boxes are the components of $\x_2$.}
\label{fig:H2}
\end{center}
\end{figure}

\begin{proposition}
\label{prop:hslideisota}
The isomorphism \[F_{3,2}:\HFK^{-,2}(\HD_3)\rightarrow\HFK^{-,2}(\HD_2)\] associated to the above sequence of isotopies sends $[\x_3]$ to $[\x_2]$.
\end{proposition}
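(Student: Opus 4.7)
The plan is to realize $F_{3,2}$ as the standard isotopy-invariance isomorphism of Heegaard Floer homology, induced by counting pseudo-holomorphic triangles in a suitable Heegaard triple diagram, and to identify, by a local analysis, the unique Maslov-index-zero contribution taking $\x_3$ to $\x_2$. A crucial structural observation is that the isotopies of $\alpha_n, \beta_n, \alpha_{n+1}, \beta_{n+1}$ cross the points $z_1, z_2$, but these are \emph{not} basepoints of $\HD_3$ (which retains only $\ws_K$ and $\zs_K \cup \ws_U$); they are honest isotopies in the complement of the basepoints of $\HD_3$, and the induced map on $\HFK^{-,2}$ is well-defined.

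I would decompose the sequence of isotopies into two pieces whose supports are disjoint: (i) a small Hamiltonian isotopy taking $\alpha_i \to \alpha_i'$ and $\beta_i \to \beta_i'$ for $i \not\in \{n, n+1\}$, and (ii) the substantive contractions of $\alpha_n, \beta_n, \alpha_{n+1}, \beta_{n+1}$ to small configurations around $w_1, w_2$. For piece (i), the argument is exactly the standard one used in case (\ref{item:2}) of the proof of Theorem~\ref{thm:transvinvt}: in the associated Heegaard triple each matched pair of intersection points is connected by a unique canonical small triangle of Maslov index zero, with domain a small region disjoint from every basepoint. These small triangles, assembled, take the components of $\x_3$ on $\alpha_i \cap \beta_i$ to the components of $\x_2$ on $\alpha_i' \cap \beta_i'$ while leaving the $\alpha_n \cap \beta_n$ and $\alpha_{n+1} \cap \beta_{n+1}$ components untouched.

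For piece (ii), I would work in a Heegaard triple supported in a neighborhood of $U$ and exhibit, near $w_1$, a specific Maslov-index-zero triangle whose corners are the smallest-Maslov intersection point of $\alpha_n \cap \beta_n$, the analogous small-configuration point of $\alpha_n' \cap \beta_n'$, and the corresponding intersection point of $\alpha_n \cap \alpha_n'$ (or $\beta_n \cap \beta_n'$) in the triple; the domain is a small region bounded by $\alpha_n, \alpha_n', \beta_n, \beta_n'$ containing no basepoint of $\HD_3$. The same analysis applies verbatim near $w_2$. Splicing all these local triangles together produces a pseudo-holomorphic triangle in the full triple diagram whose domain is a disjoint union of small disks missing every basepoint of $\HD_3$, contributing $\x_2$ with coefficient $1$ in $F_{3,2}(\x_3)$.

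The main obstacle is ruling out other contributions to $F_{3,2}(\x_3)$ at the Maslov grading of $\x_2$. Here I would argue by positivity of domain together with the basepoint constraints: any alternative triangle must either (a) pass through a point of $\ws_K$ (forbidden, as these are the $z$-basepoints of $\HD_3$), (b) pass through $w_1$ or $w_2$, which contribute zero in $\CFK^{-,2}$ since $w_1, w_2 \in \ws_f$, or (c) pass through a basepoint in $\zs_K$, which absorbs a $U$-factor that shifts the Maslov grading below that of $\x_2$. Combined with the constraint that $\x_3$ and $\x_2$ each have smallest-Maslov components at the relevant intersections, a straightforward local enumeration in the small Heegaard triple near $U$ verifies that the triangle described above is the only Maslov-index-zero contribution, so $F_{3,2}(\x_3) = \x_2$ in $\CFK^{-,2}(\HD_2)$, and hence $F_{3,2}([\x_3]) = [\x_2]$.
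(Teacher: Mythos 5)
Your overall strategy---realizing $F_{3,2}$ by pseudo-holomorphic triangle counts, exhibiting the canonical union of small triangles carrying $\x_3$ to $\x_2$, and excluding other contributions via positivity of domains and basepoint constraints---is the same as the paper's, which factors $F_{3,2}$ as $f_{\beta',\alpha,\alpha'}\circ f_{\beta',\beta,\alpha}$ through an intermediate generator $\x_{2.5}$ and runs an explicit local multiplicity computation at each small triangle. One structural remark before the main issue: a single Heegaard triple can replace only one family of attaching curves at a time, so your decomposition into spatially disjoint ``pieces,'' each of which moves both $\alpha$- and $\beta$-curves simultaneously, does not directly define triangle maps. You must factor as the paper does---first $\betas\to\betas'$ via the triple $(\Sigma,\betas',\betas,\alphas,\ws_K,\zs_K\cup\ws_U)$, then $\alphas\to\alphas'$---though this is fixable bookkeeping.

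The genuine gap is in your item (c), which is the step meant to exclude the dangerous extra terms. A triangle $u$ passing through a basepoint of $\zs_K$ contributes a term $U_w^{n_w(u)}\cdot\y$ to the image of $\x_3$, and since the triangle map with $\Theta$ in its top Maslov grading preserves the total Maslov grading, this term lies in the \emph{same} Maslov grading as $\x_2$: the $U$-power exactly compensates for the higher grading of $\y$. Such contributions therefore cannot be discarded on grading grounds, and if present they could change the homology class, so the chain-level equality $F_{3,2}(\x_3)=\x_2$ does not follow from (a)--(c). These terms must be excluded at the level of domains, which is precisely what the paper's local analysis does: writing the multiplicities $a,b,c,d,e$ of $D(u)$ around a small triangle, the basepoints in $\ws_K$ force two adjacent regions to have multiplicity $0$, the corner conditions give $a=b+c+1$ and $a+d=b+1$, positivity forces $c=d=0$, and one concludes the domain is exactly the disjoint union of small triangles---in particular disjoint from $\zs_K\cup\ws_U$, so no $U$-decorated terms occur at all. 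Your ``straightforward local enumeration'' is exactly this computation; as written, it is asserted rather than performed, and the stated reason for ignoring $\zs_K$-crossing triangles is incorrect.
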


\begin{proof}[Proof of Proposition \ref{prop:hslideisota}]

Let $\HD_{2.5}=(\Sigma,\betas',\alphas,\ws_K,\zs_K\cup\ws_{U})$.
The map $F_{3,2}$ is induced by the composition $f_{\beta',\alpha,\alpha'}\circ f_{\beta',\beta,\alpha}$, where \begin{align*}
f_{\beta',\beta,\alpha}&: \CFK^{-,2}(\HD_{3})\rightarrow \CFK^{-,2}(\HD_{2.5}),\\
f_{\beta',\alpha,\alpha'}&: \CFK^{-,2}(\HD_{2.5})\rightarrow \CFK^{-,2}(\HD_{2})
\end{align*}
are the pseudo-holomorphic triangle-counting maps associated to the multi-pointed Heegaard triple diagrams $(\Sigma,\betas',\betas,\alphas,\ws_K,\zs_K\cup\ws_{U})$ and $(\Sigma,\betas',\alphas,\alphas',\ws_K,\zs_K\cup\ws_{U})$, respectively. Let $\x_{2.5}$ be the generator of $\CFK^{-,2}(\HD_{2.5})$ obtained from $\x_3$ by replacing the components of $\x_3$ on $\alpha_n\cap\beta_n$ and $\alpha_{n+1}\cap \beta_{n+1}$ by the intersection points on $\alpha_n\cap\beta_n'$ and $\alpha_{n+1}\cap \beta_{n+1}'$ with the smallest Maslov grading contributions, and replacing all other components of $\x_3$ by the corresponding intersection points between the $\alphas$ and $\betas'$ curves. It is not hard to show that $f_{\beta',\beta,\alpha}$ sends $\x_3$ to $\x_{2.5}$ and $f_{\beta',\alpha,\alpha'}$ sends $\x_{2.5}$ to $\x_2$. Below, we illustrate the proof of the first statement; the second follows by a similar argument. These two statements prove Proposition \ref{prop:hslideisot}.

Let $\Theta$ denote the generator of the complex $\CFK^{-,2}(\Sigma,\betas',\betas,\ws_K,\zs_K\cup\ws_{U})$ in its top Maslov grading, let $\y$ be a generator of $\CFK^{-,2}(\HD_{2.5})$ and suppose $u\in \pi_2(\Theta,\x_3,\y)$ is a Whitney triangle (of the sort that would count for $f_{\beta',\beta,\alpha}$) which admits a pseudo-holomorphic representative. We claim that $\y=\x_{2.5}$ and that the domain $D(u)$ must be a disjoint union of the small triangles near the components of $\Theta$, $\x_3$ and $\x_{2.5}$ as shown in Figure \ref{fig:triangles}. In this case, $u$ has a unique pseudo-holomorphic representative, which implies that $f_{\beta',\beta,\alpha}(\x_3) = \x_{2.5}$, proving the proposition. To prove this claim, we analyze the multiplicities of $D(u)$ near these small triangles on $\Sigma$. 

The diagram in the upper right of Figure \ref{fig:triangles} shows said multiplicities near one of these small triangles. Since the region just outside of the triangle but adjacent to the $\betas'$ curve contains a $\ws_K$ basepoint, the multiplicity of $D(u)$ in that region is $0$, as indicated in the figure. The same goes for the region just outside of the triangle and sandwiched between the $\alphas$ and $\betas$ curves. The fact that $\Theta$ and $\x_3$ are corners of $D(u)$ implies that $a=b+c+1$ and $a+d=b+1$. Subtracting the second equation from the first, we have that $-d=c$. The fact that $u$ admits a pseudo-holomorphic representative means that all multiplicities of $D(u)$ are non-negative; therefore, $d=c=0$, which implies that $a=b+1$. If $D(u)$ does not have a corner at the component of $\x_{2.5}$ at the vertex of this small triangle, then $a=-e$ which implies that $a=0$. But this implies that $b=-1$, a contradiction. Therefore, $\y = \x_{2.5}$. But this implies that $a+e=1$, which implies that $a=1$, $e=0$, and $b=0$. In summary, we have shown that the multiplicities of $D(u)$ near this triangles are $a=1$ and $b=c=d=e=0$.
\end{proof}

\begin{figure}[!htbp]
\labellist 
\hair 2pt 
\small
\pinlabel $a$ at 192 73
\pinlabel $b$ at 181 79
\pinlabel $c$ at 203 78
\pinlabel $d$ at 171 60
\pinlabel $e$ at 213 60
\pinlabel $0$ at 192 59
\pinlabel $0$ at 192 96
\pinlabel $w$ at 78 63
\pinlabel $w$ at 78 83
\pinlabel $w$ at 78 103
\pinlabel $w$ at 78 42

\pinlabel $S_{1/2}$ at 193 35
\endlabellist 
\begin{center}
\includegraphics[height = 3.8cm]{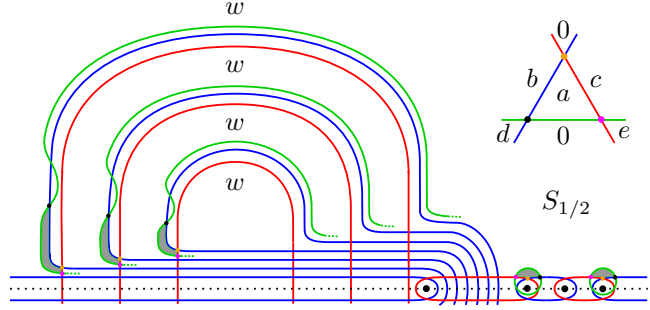}
\caption{\quad A portion of the triple diagram $(\Sigma,\betas',\betas,\alphas,\ws_K,\zs_K\cup\ws_{U})$. The $\betas'$ curves are in green. The generators $\Theta$, $\x_3$ and $\x_{2.5}$ are represented by the black, orange, and pink dots, respectively. The union of the small shaded triangles is the domain of the class $u\in \pi_2(\Theta,\x_3,\x_{2.5})$. }
\label{fig:triangles}
\end{center}
\end{figure}

Observe that $\HD_1$ is the diagram obtained from $\HD_2$ by performing two free index 0/3 destabilizations to remove the basepoints $w_1,w_2$. It is clear that the projection and inclusion maps \begin{align*}j_{2,1}^2&:\CFK^{-,2}(\HD_2)\rightarrow \CFKm(\HD_1),\\
i^2_{1,2}&:\CFKm(\HD_1)\rightarrow \CFK^{-,2}(\HD_2),\end{align*} defined in Subsection \ref{sub:hfk}, send $\x_2$ to $\x_1$ and $\x_1$ to $\x_2$, respectively. Combining this with Proposition \ref{prop:hslideisota}, we have the following.

\begin{proposition}
\label{prop:transversedestaba}
The compositions \begin{align*}(j^2_{2,1})_*\circ F_{3,2}&:\HFK^{-,2}(\HD_3)\rightarrow \HFKm(\HD_1),\\
F_{3,2}^{-1}\circ(i^2_{1,2})_*&:\HFKm(\HD_1)\rightarrow \HFK^{-,2}(\HD_3),\end{align*} send $[\x_3]$ to $t(K)$ and $t(K)$ to $[\x_3]$, respectively.
\end{proposition}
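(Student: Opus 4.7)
The plan is to derive this proposition as a formal consequence of Proposition \ref{prop:hslideisota} together with the chain-level identifications recorded in the paragraph immediately preceding the statement. Recall that $[\x_1] = t(K)$ by the very definition of $\HD_1$, and that (as asserted there) $j^2_{2,1}(\x_2) = \x_1$ and $i^2_{1,2}(\x_1) = \x_2$ already on the chain level. Everything needed is in place; the proposition amounts to stringing these facts together and checking the bookkeeping.

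For the first composition, I would start with $[\x_3] \in \HFK^{-,2}(\HD_3)$, apply Proposition \ref{prop:hslideisota} to obtain $F_{3,2}([\x_3]) = [\x_2]$, and then use the chain-level identity $j^2_{2,1}(\x_2) = \x_1$ (applied to a representative cycle) to conclude that $(j^2_{2,1})_*([\x_2]) = [\x_1] = t(K)$. For the second composition, I would begin with $t(K) = [\x_1]$, note that $(i^2_{1,2})_*$ carries this to $[\x_2]$ by the dual chain-level identity, and then apply $F_{3,2}^{-1}$ -- which exists because Proposition \ref{prop:hslideisota} produces an isomorphism -- to land on $[\x_3]$. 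Internal consistency of the two halves is automatic from the general fact, recorded in Subsection \ref{sub:hfk}, that $j^k_* \circ i^k_* = \mathrm{id}$.

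The only real content to verify -- and the text implicitly asks the reader to do this -- is the assertion that the projection and inclusion maps act as claimed on $\x_2$ and $\x_1$. This reduces to confirming that the components of $\x_2$ on the small configurations $\alpha_n'\cap\beta_n'$ and $\alpha_{n+1}'\cap\beta_{n+1}'$ sit at the ``$x'$'' positions of Figure~\ref{fig:freestab} rather than the ``$y'$'' positions. That is enforced by the prescription that these components are the intersection points with the smallest Maslov grading contributions, matching the Maslov grading shift $[1]$ appearing in the inclusion map from Subsection \ref{sub:hfk}. After this check, $i^2_{1,2}(\x_1) = \x_1 \cup \{x'_1, x'_2\} = \x_2$ by construction, and $j^2_{2,1}$ reverses this by sending $\x_2$ back to $\x_1$ and annihilating any generator containing a $y'$-component.

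I do not expect a genuine obstacle here; the proposition is essentially a corollary of Proposition \ref{prop:hslideisota} once the small-configuration bookkeeping is carried out. The only mild subtlety is keeping straight that the free index $0/3$ destabilization acts independently at each of the two basepoints $w_1,w_2$, so one is really applying the basic isomorphism of Subsection \ref{sub:hfk} twice in succession -- which is exactly what the superscript $2$ on $j^2_{2,1}$ and $i^2_{1,2}$ records.
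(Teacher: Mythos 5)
Your proposal is correct and follows the paper's own argument: the paper likewise deduces the proposition by combining Proposition \ref{prop:hslideisota} with the observation that the chain maps $j^2_{2,1}$ and $i^2_{1,2}$ send $\x_2$ to $\x_1$ and $\x_1$ to $\x_2$. Your extra check that the small-configuration components of $\x_2$ occupy the ``$x'$'' positions (forced by the smallest-Maslov-contribution prescription) is exactly the bookkeeping the paper leaves implicit with ``it is clear.''
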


From the identification of $t(K)$ with the LOSS invariant $\Tm(K)$ in the previous section, we know that $M(t(K)) = sl(K)+1$. Recall from Subsection \ref{sub:hfk} that the maps $i^2_{1,2}$ and $j^2_{2,1}$ increase Maslov grading by $2$. It follows that the Maslov grading of $\x_3$ is $M(\x_3) = sl(K)-1$. In particular, $t=top=sl(K)-1$. We will use this last fact in the next section to prove an analogue of Proposition \ref{prop:bottomfiltrationa}.

\section{A Reformulation of the GRID Invariant $\theta$} 
\label{sec:comb_char}

In this section, we give an alternate formulation of the GRID invariant $\theta$ in terms of the filtration on the knot Floer complex of a transverse braid that is induced by its braid axis. This reformulation is identical to that described in Section \ref{sec:braid_char} for our BRAID invariant $t$. As mentioned in the previous section, our plan is to combine these reformulations in Section \ref{sec:braidgrid} to establish an equivalence between $t$ and $\theta$.

Suppose $K$ is a transverse link in $(S^3,\xi_{std})$. As in Subsection \ref{sub:legendrian_invariant_in_combinatorial_knot_floer_homology}, we may think of $K$ as a transverse link in $(\mathbb{R}^3,\xi_{std})$. Let $\xi_{rot}$ denote the contact structure on $\mathbb{R}^3$, given in cylindrical coordinates by \[\xi_{rot} = \ker(dz+r^2d\theta).\] The map $\phi:(\mathbb{R}^3,\xi_{rot})\rightarrow (\mathbb{R}^3,\xi_{std})$ defined by \[\phi(x,y,z) = (x, 2y, xy+z)\] is a contactomorphism \cite{NK}. A result of Bennequin states that $\phi^{-1}(K)$ is transversely isotopic in $(\mathbb{R}^3,\xi_{rot})$ to a transverse braid $T_{\beta}$ about the $z$-axis \cite{benn}, as shown in Figure \ref{fig:braidleg}. It follows from that $K$ is transversely isotopic to $\phi(T_{\beta})$, which is the positive transverse pushoff of the Legendrian braid $L_{\beta}$ shown in Figure \ref{fig:braidleg} \cite{NK}. Therefore, $\theta(K) = \lambda(L_{\beta}).$

\begin{figure}[!htbp]
\labellist 
\hair 2pt 
\small
\pinlabel \rotatebox{-3}{$\beta$} at 45 179
\pinlabel $\beta$ at 215 231
\pinlabel $\beta$ at 447 234
\pinlabel $z$ at 85 275
\pinlabel $z$ at 265 274
\pinlabel $z$ at 456 272
\pinlabel $x$ at 350 201
\pinlabel $x$ at 539 196

\endlabellist 
\begin{center}
\includegraphics[height = 5.3cm]{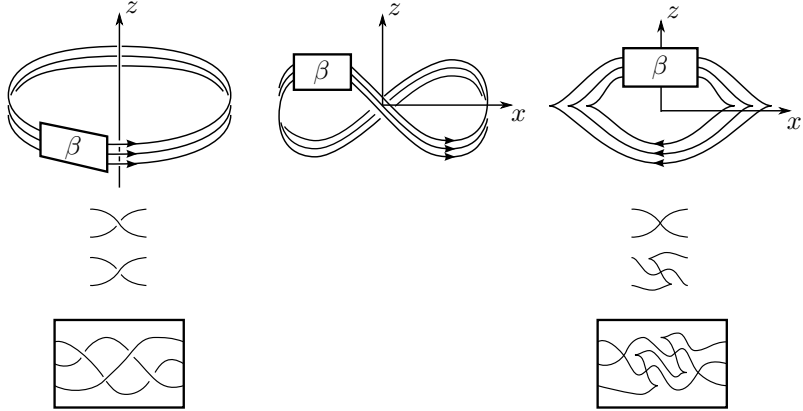}
\caption{\quad The top row shows $T_{\beta}$, $\phi(T_{\beta})$ and $L_{\beta}$ from left to right (we have drawn $T_{\beta}$ as a 3-strand braid). The next row shows how positive and negative crossings in $T_{\beta}$ are converted to crossings in $L_{\beta}$. The bottom row shows an example of this conversion when $\beta$ is given by the braid word $\sigma_1\sigma_2^{-1}\sigma_1^{-1}\sigma_2$.}
\label{fig:braidleg}
\end{center}
\end{figure}

Let $Z$ denote the oriented $z$-axis, and consider the open book decomposition $(Z,\pi)$ of $\mathbb{R}^3$, where $\pi:\mathbb{R}^3\setminus Z\rightarrow S^1$ is the map given by \[\pi(z,r,\theta) = \theta.\] Since the open book $(Z,\pi)$ is compatible with $(\mathbb{R}^3,\xi_{rot}),$ it gives rise to an open book $\mathfrak{ob}$ compatible with $(S^3,\xi_{std})$ via the contactomorphism $\phi$ and the identification of $S^3$ with $\mathbb{R}^3 \cup \{\infty\}$. The binding of $\mathfrak{ob}$ is an unknot, $U=\phi(Z) \cup \{\infty\}$, and its pages are disks. Note that $\phi(T_\beta)$ is braided with respect to $\mathfrak{ob}$ since $T_{\beta}$ was braided with respect to $(Z,\pi)$. The leftmost diagram in Figure \ref{fig:braidaxis} shows the link $\phi(T_{\beta})$ together with its braid axis $U$.

In a slight abuse of notation, we will denote transverse link $\phi(T_{\beta})$ simply by $K$. The remainder of this section is devoted to studying the filtration induced by $-U\subset -S^3$ on the knot Floer chain complex for $K\subset -S^3$, and the relationship between this filtration and the invariant $\theta(K)=\lambda(L_{\beta})$. We start with a special grid diagram $G$ for $K\cup -U\subset S^3$ such that the front projection specified by $G$ is isotopic through front projections to the front projection shown on the right in Figure \ref{fig:braidaxis}. 

\begin{figure}[!htbp]
\labellist 
\hair 2pt 
\small

\pinlabel $\beta$ at 50 66
\pinlabel $\phi(T_\beta)$ at 183 14
\pinlabel $U$ at -6 14

\pinlabel $\beta$ at 361 68
\pinlabel $L_\beta$ at 430 17
\pinlabel $-\mathcal{U}$ at 270 17

\endlabellist 
\begin{center}
\includegraphics[height = 1.9cm]{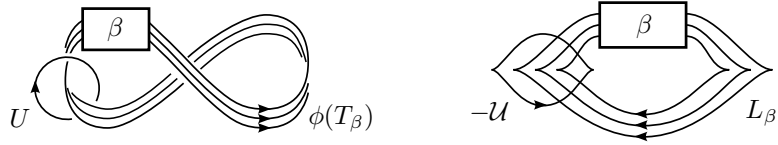}
\caption{\quad On the left, the link $\phi(T_{\beta})\cup U$. On the right, a Legendrian front projection for $L_{\beta}\cup -\mathcal{U}$, where $-\mathcal{U}$ is the $tb=-1$ representative of $-U$. This Legendrian link is smoothly isotopic to $K\cup -U$.}
\label{fig:braidaxis}
\end{center}
\end{figure}

Let $(T^2,\alphas,\betas,\zs_K\cup \zs_{-U},\ws_K\cup\ws_{-U})$ be the multi-pointed Heegaard diagram associated to $G$, with vertical circles labeled $\alpha_1,\dots,\alpha_k$ from left to right and horizontal circles labeled $\beta_1,\dots,\beta_k$ from bottom to top. We require that $\ws_{-U} = \{w_1,w_2\}$, where $w_1$ lies between $\alpha_1$ and $\alpha_2$ and between $\beta_1$ and $\beta_2$, and $w_2$ lies between some $\alpha_m$ and $\alpha_{m+1}$ and between $\beta_m$ and $\beta_{m+1}$. This condition on $\ws_{-U}$ implies that $-U$ divides $T^2$ into four rectangular regions, $R_1,R_2,R_3,R_4$, with corners at the four points of $\zs_{-U}\cup \ws_{-U}$. Here, $R_1$ is the square region bounded by $-U$, $R_2$ is the square region diagonal to $R_1$ and $R_3,R_4$ are the rectangular regions above and to the right of $R_1$. We require that all crossings of $K$ are contained within $R_1\cup R_2$. See Figure \ref{fig:examplegrid} for an example of (the Heegaard diagram associated to) a grid diagram $G$ with these properties.

\begin{figure}[!htbp]
\labellist 
\hair 2pt 
\small
\pinlabel $z$ at 297 297
\pinlabel $z$ at 274 274
\pinlabel $z$ at 251 251
\pinlabel $z$ at 181 204
\pinlabel $z$ at 204 181
\pinlabel $z$ at 135 227
\pinlabel $z$ at 228 134
\pinlabel $z$ at 158 157

\pinlabel $z$ at 88 87
\pinlabel $z$ at 65 64
\pinlabel $z$ at 41 41
\pinlabel $z$ at 19 110
\pinlabel $z$ at 112 18
\pinlabel $w_1$ at 19 17
\pinlabel $w_2$ at 112 109

\pinlabel $w$ at 88 133
\pinlabel $w$ at 228 204
\pinlabel $w$ at 181 227
\pinlabel $w$ at 135 297
\pinlabel $w$ at 298 41
\pinlabel $w$ at 41 181
\pinlabel $w$ at 204 274
\pinlabel $w$ at 275 64
\pinlabel $w$ at 65 158
\pinlabel $w$ at 158 251
\pinlabel $w$ at 252 87
\endlabellist 
\begin{center}
\includegraphics[height = 4.7cm]{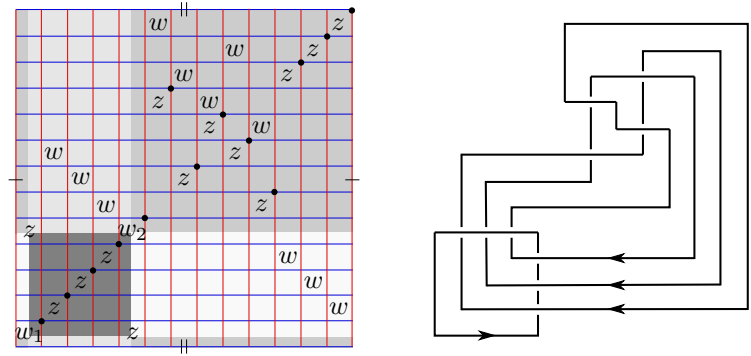}
\caption{\quad The Heegaard diagram associated to a special grid diagram for $K\cup -U$ in the case that $\beta$ is given by the braid word $\sigma_1\sigma_2^{-1}\sigma_1^{-1}\sigma_2$. The shaded regions are $R_1,\dots,R_4$ from darkest to lightest. The black dots indicate the components of the generator $\x_4$. }
\label{fig:examplegrid}
\end{center}
\end{figure}

Note that $(-T^2,\alphas,\betas,\zs_K\cup \zs_{-U},\ws_K\cup\ws_{-U})$ is a multi-pointed Heegaard diagram for the link $K\cup -U \subset -S^3$. Likewise, $\HD_4=(-T^2,\alphas,\betas,\zs_K,\ws_K\cup\ws_{-U})$ is a multi-pointed Heegaard diagram for $K\subset -S^3$ with free basepoints $w_1,w_2\in\ws_{-U}$.\footnote{We defined Heegaard diagrams $\HD_1,\HD_2,\HD_3$ in the previous section; hence, our notation $\HD_4$ for this diagram.} As discussed in Section \ref{sec:braid_char}, the Alexander grading $A_{-U}$ induces a filtration \begin{equation}\label{eqn:filt}\emptyset = \mscr{F}^{-U}_m(\HD_4)\subset\mscr{F}^{-U}_{m+1}(\HD_4)\subset\dots\subset\mscr{F}^{-U}_n(\HD_4)=\rm{CFK}^{-,2}(\HD_4),\end{equation} where $\mscr{F}^{-U}_{k}(\HD_4)$ is generated by $\{\x\in \mathbb{T}_{\alpha}\cup \mathbb{T}_{\beta}\,|\,A_{-U}(\x)\leq k\}.$ It is not hard to see that $A_{-U}(\mathbf{x})$ is equal, up to an overall shift, to the sum of the winding numbers of $-U$ around the components of $\bf x$ (c.f. \cite{mos}). It follows that generators in the bottommost nontrivial filtration level $\mscr{F}^{-U}_{bot}(\HD_4)$ are precisely those whose components are contained within the regions $R_1\cup R_2$.

Let $\x_4$ denote the generator of $\rm{CFK}^{-,2}(\HD_4)$ consisting of the intersection points at the upper right-hand corners of the squares containing the basepoints in $\zs_K\cup\ws_{-U}$. The components of $\x_4$ are contained in $R_1\cup R_2$, which implies that $\x_4\in \mscr{F}^{-U}_{bot}(\HD_4)$. Moreover, $\x_4$ is a cycle in $\rm{CFK}^{-,2}(\HD_4)$. We show in Proposition \ref{prop:bottomfiltration} that $[\x_4]$ generates the summand $H_{top}(\mscr{F}^{-U}_{bot}(\HD_4))$ of $H_*(\mscr{F}^{-U}_{bot}(\HD_4))$ in the top Maslov grading. Below, we describe in two steps the relationship between the $\x_4$ and the transverse invariant $\theta(K)$. 

Let $\HD_5 = (-T^2,\alphas',\betas',\zs_K\cup \zs_{-U},\ws_K\cup\ws_{-U})$ be the Heegaard diagram obtained from $\HD_4$ by handlesliding $\alpha_{2}$, $\beta_2$, $\alpha_{n+2}$ and $\beta_{n+2}$ over $\alpha_1$, $\beta_2$, $\alpha_{n+3}$ and $\beta_{n+3}$, respectively, and then isotoping so that the new curves $\alpha_2'$, $\beta_2'$ form a small configuration around $w_1$ and $\alpha_{n+2}'$, $\beta_{n+2}'$ form a small configuration around $w_2$. We will assume that $\alpha_i'$ and $\alpha_i$ are related by a small Hamiltonian isotopy for $i\neq 2,n+2$, and likewise for $\beta_i'$ and $\beta_i$. Let $\x_5$ be the generator of $\rm{CFK}^{-,2}(\HD_5)$ obtained from $\x_4$ by replacing the components of $\x_4$ on $\alpha_2\cap\beta_2$ and $\alpha_{n+2}\cap \beta_{n+2}$ by the intersection points on $\alpha_2'\cap\beta_2'$ and $\alpha_{n+2}'\cap \beta_{n+2}'$ with the smallest Maslov grading contributions (see Figure \ref{fig:H5} for an example), and replacing all other components of $\x_4$ by the corresponding intersection points between the $\alphas'$ and $\betas'$ curves.

\begin{figure}[!htbp]
\labellist 
\hair 2pt 
\small
\pinlabel $z$ at 346 343
\pinlabel $z$ at 323 320
\pinlabel $z$ at 299 296
\pinlabel $z$ at 229 249
\pinlabel $z$ at 252 226
\pinlabel $z$ at 181 271
\pinlabel $z$ at 277 179
\pinlabel $z$ at 205 202

\pinlabel $w_2$ at 141 138

\pinlabel $z$ at 113 112
\pinlabel $z$ at 89 87
\pinlabel $z$ at 65 63

\pinlabel $w_1$ at 39 36

\pinlabel $w$ at 114 178
\pinlabel $w$ at 276 249
\pinlabel $w$ at 230 272
\pinlabel $w$ at 183 342
\pinlabel $w$ at 346 64
\pinlabel $w$ at 64 226
\pinlabel $w$ at 253 319
\pinlabel $w$ at 323 87
\pinlabel $w$ at 89 202
\pinlabel $w$ at 206 296
\pinlabel $w$ at 300 110
\endlabellist 
\begin{center}
\includegraphics[height = 5.1cm]{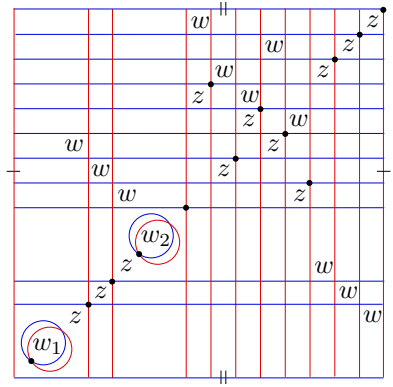}
\caption{\quad The Heegaard diagram $\HD_5$ for the example in Figure \ref{fig:examplegrid}. The black dots represent the generator $\x_5$. }
\label{fig:H5}
\end{center}
\end{figure}

\begin{proposition}
\label{prop:hslideisot}
The isomorphism \[F_{4,5}:\HFK^{-,2}(\HD_4)\rightarrow\HFK^{-,2}(\HD_5)\] associated to the above sequence of handleslides and isotopies sends $[\x_4]$ to $[\x_5]$.
\end{proposition}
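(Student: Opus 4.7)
The plan is to mirror the proof of Proposition~\ref{prop:hslideisota}, factoring $F_{4,5}$ as a composition of triangle-counting maps through an intermediate diagram. Let $\HD_{4.5} = (-T^2,\alphas,\betas',\zs_K\cup\zs_{-U},\ws_K\cup\ws_{-U})$, obtained from $\HD_4$ by performing the $\beta$-handleslides and isotopies first (so that $\beta_2'$ and $\beta_{n+2}'$ each complete half of their small configurations). Then $F_{4,5}$ is induced by the composition $f_{\beta',\alpha,\alpha'}\circ f_{\beta,\beta',\alpha}$, where these are the pseudo-holomorphic triangle-counting maps associated to the Heegaard triple diagrams $(-T^2,\betas,\betas',\alphas,\zs_K,\ws_K\cup\ws_{-U})$ and $(-T^2,\betas',\alphas,\alphas',\zs_K,\ws_K\cup\ws_{-U})$, respectively.

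Next I would define the intermediate generator $\x_{4.5}\in\CFK^{-,2}(\HD_{4.5})$ by replacing the components of $\x_4$ on $\alpha_2\cap\beta_2$ and $\alpha_{n+2}\cap\beta_{n+2}$ by the corresponding points on $\alpha_2\cap\beta_2'$ and $\alpha_{n+2}\cap\beta_{n+2}'$ with smallest Maslov contribution, and all other components by the points obtained via the small Hamiltonian isotopy relating $\betas$ and $\betas'$. The claim reduces to showing $f_{\beta,\beta',\alpha}(\x_4)=\x_{4.5}$ and $f_{\beta',\alpha,\alpha'}(\x_{4.5})=\x_5$. I will only describe the first; the second is entirely analogous.

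Let $\Theta$ denote the top-Maslov generator of $\CFK^{-,2}(-T^2,\betas,\betas',\zs_K,\ws_K\cup\ws_{-U})$, and suppose $u\in\pi_2(\Theta,\x_4,\y)$ is a triangle class admitting a pseudo-holomorphic representative for some generator $\y$ of $\CFK^{-,2}(\HD_{4.5})$. I would analyze the multiplicities of $D(u)$ region-by-region, exactly as in Figure~\ref{fig:triangles}. The density of basepoints in the grid diagram forces $D(u)$ to have multiplicity zero in every rectangle containing a basepoint of $\zs_K\cup\ws_K\cup\ws_{-U}$ that is immediately adjacent to one of the small triangles. Combined with the corner conditions at $\Theta$ and $\x_4$, and the non-negativity of multiplicities, this forces $D(u)$ to be a disjoint union of the small triangles near each intersection point of $\x_4$, and forces $\y=\x_{4.5}$. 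Each such triangle class has a unique pseudo-holomorphic representative, so $f_{\beta,\beta',\alpha}(\x_4)=\x_{4.5}$.

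The main obstacle will be the local analysis at the small configurations around $w_1$ and $w_2$, which is where the handleslide (as opposed to a mere isotopy) actually occurs. Near each $w_i$ the multiplicity pattern is not governed by a single basepoint adjacent to the triangle, but rather by the surrounding grid-squares containing basepoints of $\zs_K\cup\zs_{-U}$. The key observation that makes this tractable is that $w_1$ sits in the $2\times 2$ subgrid with corners $\alpha_1,\alpha_2,\beta_1,\beta_2$ (and analogously for $w_2$), so all adjacent regions outside the small configuration carry basepoints and contribute multiplicity zero. This reduces the local picture to exactly the configuration handled in Proposition~\ref{prop:hslideisota}, and the same linear-algebra argument on the unknown multiplicities $a,b,c,d,e$ goes through verbatim. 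At all other $\x_4$ components the handleslides are replaced by isotopies, and the small-triangle argument of the case~(\ref{item:2}) of Theorem~\ref{thm:transvinvt} applies directly.
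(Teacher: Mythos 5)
There is a genuine gap. Your plan is to transplant the local multiplicity analysis of Proposition~\ref{prop:hslideisota} (the $a,b,c,d,e$ bookkeeping of Figure~\ref{fig:triangles}) to the grid setting, and the step that fails is the assertion that the regions adjacent to each small triangle carry basepoints and hence have multiplicity zero. In the open book diagram $\HD_3$ this is true because every component of $\x_3$ lies in $S_{1/2}$, hemmed in by $\ws_K$-laden regions; but in a $k\times k$ grid only about $2k$ of the $k^2$ squares contain basepoints, so for a generic component of $\x_4$ (the upper-right corner of a square containing a point of $\zs_K\cup\ws_{-U}$) only the lower-left adjacent square is forced to have multiplicity zero, while the other neighbors are typically basepoint-free (this is visible already in Figure~\ref{fig:examplegrid}). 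With only one vanishing neighbor, the corner conditions at $\Theta$ and $\x_4$ leave the local system of multiplicities underdetermined, and nothing prevents $D(u)$ from containing large positive regions threading through basepoint-free squares far from the small triangles. The same objection applies to your appeal to the isotopy argument of case~(\ref{item:2}) of Theorem~\ref{thm:transvinvt} at the remaining components, and your claim that every square adjacent to the $w_1$ (resp.\ $w_2$) square contains a basepoint is also not guaranteed: the column between $\alpha_1$ and $\alpha_2$ contains exactly one $z$ of $\zs_{-U}$, and it need not sit adjacent to $w_1$.

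This is precisely why the paper abandons the local analysis here and instead argues globally, following \cite[Lemma 3.4]{Ver}: one fixes the reference class $u_0\in\pi_2(\x_4,\Theta,\x_{4.5})$ whose domain is the union of small triangles, and for any competitor $u\in\pi_2(\x_4,\Theta,\y)$ avoiding the basepoints one subtracts $D(u_0)$ together with a suitable periodic domain $P_0$ of $(-T^2,\betas,\betas',\zs_K,\ws_K\cup\ws_{-U})$ so that the remainder $D$ is a domain of $(-T^2,\alphas,\betas')$ connecting $\x_{4.5}$ to $\y$ and missing all basepoints. Any such nonzero $D$ has negative multiplicities, and since $D(u_0)+P$ never fully covers a region of $\Sigma-\alphas-\betas'$, those negative multiplicities persist in $D(u)$ itself, ruling out a pseudo-holomorphic representative. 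If you want to salvage a proof, you need some version of this subtraction-and-positivity argument (or another global input); the purely local triangle-by-triangle count does not close up in a grid diagram.
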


\begin{proof}[Proof of Proposition \ref{prop:hslideisot}]
Let $\HD_{4.5}=(-T^2,\alphas,\betas',\zs_K,\ws_K\cup\ws_{-U})$.
The map $F_{4,5}$ is induced by the composition $f_{\alpha',\alpha,\beta'}\circ f_{\alpha,\beta,\beta'}$, where \begin{align*}
f_{\alpha,\beta,\beta'}&: \CFK^{-,2}(\HD_{4})\rightarrow \CFK^{-,2}(\HD_{4.5}),\\
f_{\alpha',\alpha,\beta'}&: \CFK^{-,2}(\HD_{4.5})\rightarrow \CFK^{-,2}(\HD_{5})
\end{align*}
are the pseudo-holomorphic triangle-counting maps associated to the multi-pointed Heegaard triple diagrams $(-T^2,\alphas,\betas,\betas',\zs_K,\ws_K\cup\ws_{-U})$ and $(-T^2,\alphas',\alphas,\betas',\zs_K,\ws_K\cup\ws_{-U})$, respectively. Let $\x_{4.5}$ be the generator of $\CFK^{-,2}(\HD_{4.5})$ obtained from $\x_4$ by replacing the components of $\x_4$ on $\alpha_2\cap\beta_2$ and $\alpha_{n+2}\cap \beta_{n+2}$ by the intersection points on $\alpha_2\cap\beta_2'$ and $\alpha_{n+2}\cap \beta_{n+2}'$ with the smallest Maslov grading contributions, and replacing all other components of $\x_4$ by the corresponding intersection points between the $\alphas$ and $\betas'$ curves. It is not hard to show that $f_{\alpha,\beta,\beta'}$ sends $\x_4$ to $\x_{4.5}$ and $f_{\alpha',\alpha,\beta'}$ sends $\x_{4.5}$ to $\x_5$. Below, we illustrate the proof of the first statement (our proof mimics that of \cite[Lemma 3.4]{Ver}); the second follows by a similar argument. These two statements prove Proposition \ref{prop:hslideisot}.

Let $\Theta$ denote the generator of the complex $\CFK^{-,2}(-T^2,\betas,\betas',\zs_K,\ws_K\cup\ws_{-U})$ in its top Maslov grading, and let $u_0\in \pi_2(\x_4,\Theta,\x_{4.5})$ denote the Whitney triangle whose domain $D(u_0)$ is the union of the small triangles, as shown in the example in Figure \ref{fig:H45}. Observe that $u_0$ admits a unique pseudo-holomorphic representative. Now, suppose $\y$ is another generator of $\CFK^{-,2}(\HD_{4.5})$, and let $u\in \pi_2(\x_4,\Theta,\y)$ be a Whitney triangle which avoids the basepoints in $\zs_{K}\cup\ws_{-U}$. The boundary of $D(u)-D(u_0)$ then consists of arcs along the $\alphas$ and $\betas'$ curves together with complete $\betas$ curves. Note that for every $\beta\in \betas$ there exists a periodic domain of $(-T^2,\betas,\betas',\zs_K,\ws_K\cup\ws_{-U})$ whose boundary is a sum of $\beta$ with curves in $\betas'$. Hence, there is a (unique) periodic domain $P_0$ of $(-T^2,\betas,\betas',\zs_K,\ws_K\cup\ws_{-U})$ for which the boundary of $D=D(u)-D(u_0)-P_0$ consists only of arcs along the $\alphas$ and $\betas'$ curves. 

\begin{figure}[!htbp]
\labellist 
\hair 2pt 
\small
\pinlabel $z$ at 345 342
\pinlabel $z$ at 321 318
\pinlabel $z$ at 297 294
\pinlabel $z$ at 228 247
\pinlabel $z$ at 252 224
\pinlabel $z$ at 181 270
\pinlabel $z$ at 276 178
\pinlabel $z$ at 205 201

\pinlabel $w_2$ at 147 142

\pinlabel $z$ at 113 110
\pinlabel $z$ at 89 85
\pinlabel $z$ at 65 62

\pinlabel $w_1$ at 31 31

\pinlabel $w$ at 114 177
\pinlabel $w$ at 275 246
\pinlabel $w$ at 229 270
\pinlabel $w$ at 182 340
\pinlabel $w$ at 345 63
\pinlabel $w$ at 64 224
\pinlabel $w$ at 252 317
\pinlabel $w$ at 323 84
\pinlabel $w$ at 89 200
\pinlabel $w$ at 205 294
\pinlabel $w$ at 299 108
\endlabellist 
\begin{center}
\includegraphics[height = 8cm]{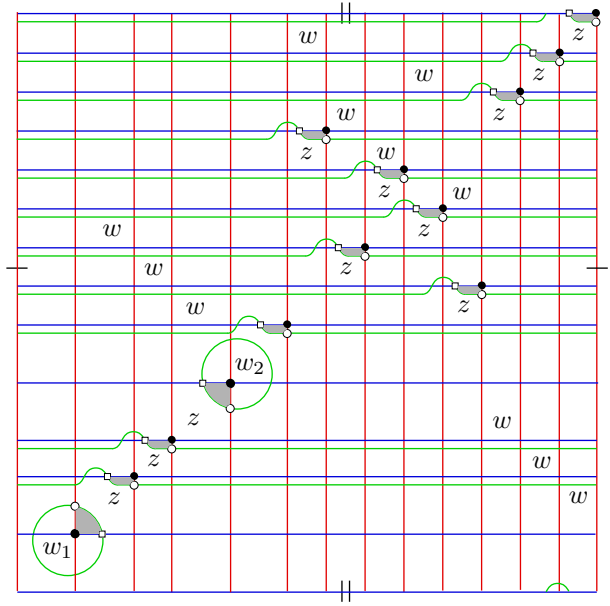}
\caption{\quad The  Heegaard triple diagram $(-T^2,\alphas,\betas,\betas',\zs_K,\ws_K\cup\ws_{-U})$ for the example in Figure \ref{fig:examplegrid}. The black dots represent the generator $\x_5$, the white dots represent $\x_{4.5}$ and the squares represent $\Theta$. The union of the small shaded triangles represents $u_0\in\pi_2(\x_4,\Theta,\x_{4.5})$.}
\label{fig:H45}
\end{center}
\end{figure}

It follows that $D$ is a domain of $(-T^2,\alphas,\betas',\zs_K,\ws_K\cup\ws_{-U})$ connecting $\x_{4.5}$ to $\y$. Note also that $D$ avoids the basepoints in $\zs_{K}\cup\ws_{-U}$. It is clear that any such domain which is not identically zero must have some negative multiplicities. Observe that $D(u_0)+P$ does not fully cover any region of $\Sigma-\alphas-\betas'$ for any periodic domain $P$ of $(-T^2,\betas,\betas',\zs_K,\ws_K\cup\ws_{-U})$. Therefore, if $u\neq u_0$, then $D$ is not identically zero. In this case, $D(u)$ must have some negative multiplicities as well (again, since $P_0$ does not fully cover any region of $\Sigma-\alphas-\betas'$), which implies that $u$ does not admit a holomorphic representative. In summary, the only pseudo-holomorphic triangle which contributes to $f_{\alpha,\beta,\beta'}(\x_4)$ is the unique pseudo-holomorphic representative of $u_0$. This implies that $f_{\alpha,\beta,\beta'}(\x_4)=\x_{4.5}.$
\end{proof}

Let $\HD_6$ be the diagram obtained from $\HD_5$ by performing two free index 0/3 destabilizations to remove the basepoints $w_1,w_2$. By construction, $\HD_6$ is the Heegaard diagram corresponding to a grid diagram whose associated front projection is isotopic to the front projection of $L_{\beta}$ shown in Figure \ref{fig:griddiagram}. Let $\x_6$ denote the generator of $\CFKm(\HD_6)$ consisting of intersection points at the upper right-hand corners of the squares containing the basepoints in $\zs_K$. Recall that $\theta(K):=[\x_6]\in \HFKm(\HD_6)$. It is clear that the projection and inclusion maps \begin{align*}j_{5,6}^2&:\CFK^{-,2}(\HD_5)\rightarrow \CFKm(\HD_6),\\
i^2_{6,5}&:\CFKm(\HD_6)\rightarrow \CFK^{-,2}(\HD_5),\end{align*} defined in Subsection \ref{sub:hfk}, send $\x_5$ to $\x_6$ and $\x_6$ to $\x_5$, respectively. Combining this with Proposition \ref{prop:hslideisot}, we have the following.

\begin{proposition}
\label{prop:transversedestab}
The compositions \begin{align*}(j^2_{5,6})_*\circ F_{4,5}&:\HFK^{-,2}(\HD_4)\rightarrow \HFKm(\HD_6),\\
F_{4,5}^{-1}\circ(i^2_{6,5})_*&:\HFKm(\HD_6)\rightarrow \HFK^{-,2}(\HD_4),\end{align*} send $[\x_4]$ to $\theta(K)$ and $\theta(K)$ to $[\x_4]$, respectively.
\end{proposition}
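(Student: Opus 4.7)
The plan is to deduce Proposition \ref{prop:transversedestab} directly from Proposition \ref{prop:hslideisot} together with the chain-level identities $j^2_{5,6}(\x_5)=\x_6$ and $i^2_{6,5}(\x_6)=\x_5$ noted immediately before the statement, which I would verify in passing.

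First I would unwind the free-index-$0/3$ (de)stabilization picture from Subsection \ref{sub:hfk} at the two small configurations around $w_1$ and $w_2$. At each such configuration, the discussion there distinguishes two intersection points $x'$ and $y'$; the projection map $j$ retains generators containing $x'$ and kills those containing $y'$, while the inclusion $i$ adjoins $x'$. Because $i$ is characterized as an isomorphism onto the Maslov-shifted summand $\CFK^{-,n+1}(\HD')_1[1]$, the point $x'$ contributes $-1$ to the Maslov grading, so $x'$ is the lower-Maslov intersection point in the small configuration. The definition of $\x_5$ selects, at each small configuration, precisely the intersection point of smallest Maslov grading contribution; hence those two components of $\x_5$ are the respective $x'$'s. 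Iterating the single-destabilization formulas, this yields $j^2_{5,6}(\x_5)=\x_6$ (strip off the two $x'$ components) and $i^2_{6,5}(\x_6)=\x_5$ (adjoin them back), at the chain level. Since $i^2_{6,5}$ and $j^2_{5,6}$ are chain maps, they descend to $\HFK^{-,2}$ and send $[\x_5]\leftrightarrow[\x_6]$.

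With these identifications in hand, the proposition follows from a short diagram chase. Proposition \ref{prop:hslideisot} supplies $F_{4,5}([\x_4])=[\x_5]$, and since $F_{4,5}$ is an isomorphism also $F_{4,5}^{-1}([\x_5])=[\x_4]$; combining this with the definition $\theta(K):=[\x_6]$, I compute
\[(j^2_{5,6})_*\circ F_{4,5}([\x_4])\;=\;(j^2_{5,6})_*([\x_5])\;=\;[\x_6]\;=\;\theta(K)\]
and
\[F_{4,5}^{-1}\circ(i^2_{6,5})_*([\x_6])\;=\;F_{4,5}^{-1}([\x_5])\;=\;[\x_4],\]
which is precisely the content of the proposition.

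The only step with any real content, and the main (admittedly modest) obstacle, is confirming that the ``smallest Maslov grading contribution'' intersection point at each small configuration is the distinguished $x'$ of the free-basepoint splitting in Subsection \ref{sub:hfk}. Once that identification is made, both compositions are routine.
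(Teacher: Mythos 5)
Your proposal is correct and follows the paper's own route exactly: the paper treats this proposition as an immediate consequence of Proposition \ref{prop:hslideisot} together with the (stated as ``clear'') chain-level facts $j^2_{5,6}(\x_5)=\x_6$ and $i^2_{6,5}(\x_6)=\x_5$. Your added verification that the ``smallest Maslov grading contribution'' points in the small configurations are the distinguished $x'$ points of the free index $0/3$ splitting is a correct and welcome elaboration of the step the paper leaves implicit.
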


Since the class $\theta(K)$ is always nonzero \cite{OST}, the same is true of $[\x_4]$. Recall from Subsection \ref{sub:hfk} that the maps $i^2_{6,5}$ and $j^2_{5,6}$ increase Maslov grading by $2$. Therefore, since $M(\theta(K)) = sl(K)+1$, the Maslov grading of $\x_4$ is given by $M(\x_4) = sl(K)-1$. We conclude with the following characterization of $[\x_4]$.

\begin{proposition}
\label{prop:bottomfiltration} Let $b = \min\{j\,|\,H_*(\mscr{F}^{-U}_j(\HD_4)) \neq 0\},$ and let $H_t(\mscr{F}^{-U}_{b}(\HD_4))$ denote the summand of $H_*(\mscr{F}^{-U}_{b}(\HD_4))$ in the top Maslov grading. Then $H_t(\mscr{F}^{-U}_{b}(\HD_4))$ has rank one and is generated by $[\x_4].$

\end{proposition}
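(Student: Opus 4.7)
The plan is to prove the proposition in three steps, following the blueprint of Proposition 6.4. First, Proposition 7.4 together with the nonvanishing of $\theta(K)$ established in \cite{OST} tells us that $(j^2_{5,6})_* \circ F_{4,5}$ sends $[\x_4]$ to $\theta(K) \neq 0$, so $[\x_4]$ is a nonzero class in $H_*(\mscr{F}^{-U}_b(\HD_4))$ living in Maslov grading $sl(K) - 1$. Second, I would prove the key combinatorial input: that $\x_4$ has strictly greater Maslov grading than every other generator in $\mscr{F}^{-U}_{bot}(\HD_4)$. Granting this, $\x_4$ is the unique generator of the bottom filtration at the top Maslov grading and is automatically a cycle there (as it is a cycle in the ambient complex), so $H_t(\mscr{F}^{-U}_b(\HD_4))$ has rank at most one. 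Combining with the nonvanishing from the first step forces $H_t$ to have rank exactly one, generated by $[\x_4]$.

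The bulk of the work is the strict Maslov inequality, which is the grid-diagram analog of Lemma 6.3. Recall that $\mscr{F}^{-U}_{bot}(\HD_4)$ consists of those generators all of whose $k$ components lie in $R_1 \cup R_2$, and that $\x_4$ is the generator whose components occupy the upper-right corners of the squares containing the basepoints in $\zs_K \cup \ws_{-U}$. Following the strategy of Lemma 6.3, I would show that for any $\y \in \mscr{F}^{-U}_{bot}(\HD_4)$ distinct from $\x_4$, one can build a sequence of generators $\y = \y_N, \y_{N-1}, \ldots, \y_0 = \x_4$ in the bottom filtration such that each consecutive pair $(\y_{i+1}, \y_i)$ differs along an empty rectangular domain contained in $R_1 \cup R_2$, disjoint from $\zs_K \cup \ws_{-U}$, of Maslov index one. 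Such a domain contributes to $\partial^-$ with coefficient $1$ and no $U$-powers, so the standard Maslov formula gives $M(\y_i) = M(\y_{i+1}) + 1$, and iterating yields $M(\x_4) > M(\y)$.

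To construct the sequence, I would ``sort'' the components of $\y$ into those of $\x_4$ one curve at a time, as in the inductive procedure used in the proof of Lemma 6.3. The geometry inside $R_1 \cup R_2$ makes this possible: each $\alpha$-curve contributes one arc to each of $R_1$ and $R_2$, each $\beta$-curve likewise, and the crossings of $K$ are entirely confined to $R_1 \cup R_2$. The main obstacle will be verifying that at each stage of the sort, the corrective rectangle can be chosen both to stay inside $R_1 \cup R_2$ (preserving membership in the bottom filtration) and to avoid the basepoints in $\zs_K \cup \ws_{-U}$ (so that its Maslov index is exactly one rather than being lowered by basepoint crossings). This hinges on the location of $\x_4$ at the upper-right corners of precisely those basepoint squares. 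Once this geometric bookkeeping is in place, the induction is routine and the proposition follows immediately from the three-step outline above.
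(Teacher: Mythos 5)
Your overall three-step skeleton is sound, but it is not the paper's argument, and the step you defer --- the strict Maslov maximality of $\x_4$ among generators of $\mscr{F}^{-U}_{bot}(\HD_4)$ --- is precisely the content that is missing, and it is far from routine. The paper never proves any chain-level statement about $\HD_4$. Instead it argues softly: $[\x_4]\neq 0$ (via Proposition \ref{prop:transversedestab} and the nonvanishing of $\theta(K)$) pins down $b=bot$; the filtered quasi-isomorphism type of $\mscr{F}^{-U}$ is an invariant of the link $K\cup -U\subset -S^3$, so the rank-one computation of $H_{top}(\mscr{F}^{-U}_{bot})$ carried out for the open-book diagram $\HD_3$ (Lemma \ref{lem:topmaslov} and Proposition \ref{prop:bottomfiltrationa}) transfers to $\HD_4$; and the grading computation $M(\x_4)=sl(K)-1=top$ from Section \ref{sec:braid_char} then forces $[\x_4]$ to generate. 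The authors sidestep exactly the combinatorics you propose to do by hand, and for good reason: the sorting induction of Lemma \ref{lem:topmaslov} leans on the rigid structure of $\HD_3$, where every $\alpha_i$ meets every $\beta_j$ in a single nested arc pattern on the disk $S_{1/2}$, so the corrective bigons, squares and hexagons are forced. In $\HD_4$ the bottom filtration level is supported on $R_1\cup R_2$, and $R_2$ contains the entire (arbitrary) braid word's worth of crossings; there is no structural reason why a corrective rectangle connecting a given $\y$ toward $\x_4$ should exist inside $R_1\cup R_2$ with the correct sign and basepoint behavior, and you give no argument that one does. Your proposal explicitly flags this as ``the main obstacle'' and then declares the rest routine, which is to say the proof is not there.

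There is also a concrete error in the grading bookkeeping that would derail the induction even if the rectangles existed. In $\HD_4=(-T^2,\alphas,\betas,\zs_K,\ws_K\cup\ws_{-U})$ the $w$-type basepoints are $\ws_K\cup\ws_{-U}$, so the relevant formula is $M(\x)-M(\y)=\mu(\phi)-2n_{\ws_K\cup\ws_{-U}}(\phi)$: to conclude that an index-one rectangle raises $M$ by exactly one you must require it to miss $\ws_K\cup\ws_{-U}$, whereas points of $\zs_K$ are harmless. You instead require disjointness from $\zs_K\cup\ws_{-U}$, which is the condition relevant to the differential, not to the grading; a rectangle meeting your condition but containing a point of $\ws_K$ would \emph{lower} $M$ by one and break monotonicity. (This is the mirror of the situation in Lemma \ref{lem:topmaslov}, where the roles of $z$ and $w$ are swapped because $\HD_3$ has the $\alpha$'s and $\beta$'s interchanged.) The fix for the proposition as a whole is not to repair the combinatorics but to replace your second step by the paper's transfer argument: invoke the invariance of the filtered quasi-isomorphism type of $\mscr{F}^{-U}$ under the Heegaard moves relating $\HD_3$ and $\HD_4$, import Proposition \ref{prop:bottomfiltrationa}, and use $M(\x_4)=sl(K)-1=top$.
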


\begin{proof}[Proof of Proposition \ref{prop:bottomfiltration}]
Since $H_*(\mscr{F}^{-U}_{bot}(\HD_4))$ is nontrivial (it contains the nonzero class $[\x_4]$), it follows that $b=bot$ and $t=top$. Since the filtered quasi-isomorphism type of $\mscr{F}^{-U}(\HD_4)$ is an invariant of the link $K\cup -U \subset -S^3$, we know from the analogous result in the previous section that $H_{top}(\mscr{F}^{-U}_{bot}(\HD_4))$ has rank one. Moreover, we proved in that section that $t=top = sl(K)-1$. Therefore, $H_{top}(\mscr{F}^{-U}_{bot}(\HD_4))$ is generated by $[\x_4]$.
\end{proof}

\section{BRAID = GRID} 
\label{sec:braidgrid}

In this short section, we make precise the correspondence between our BRAID invariant $t$ and the GRID invariant $\theta$, using the results of Sections \ref{sec:braid_char} and \ref{sec:comb_char}. 

Suppose $K$ is a transverse knot in $(S^3,\xi_{std})$ which is braided with respect to the standard disk open book decomposition $(U,\pi)$ of $S^3$. Let $\HD_1,\dots,\HD_6$ and $\x_1,\dots,\x_6$ be the Heegaard diagrams and generators defined in Sections \ref{sec:braid_char} and \ref{sec:comb_char}. If we include the $\zs_U$ and $\zs_{-U}$ basepoints in the Heegaard diagrams $\HD_3$ and $\HD_4$, respectively, then each encodes the link $K\cup -U\subset -S^3$. It follows that $\HD_4$ may be obtained from $\HD_3$ by a sequence of isotopies and handleslides avoiding all basepoints, together with index 1/2 (de)stabilizations and linked index 0/3 (de)stabilizations involving only the basepoints in $\zs_K\cup\ws_K$.  The sets $\zs_K$, $\ws_K$, $\zs_U$ and $\ws_U$ are identified, respectively, with $\ws_K$, $\zs_K$, $\zs_{-U}$ and $\ws_{-U}$ via these moves. Associated to this sequence of Heegaard moves is a chain map \[f_{3,4}: \CFK^{-,2}(\HD_3) \rightarrow \CFK^{-,2}(\HD_4)\] which induces an isomorphism \[F_{3,4}:\HFK^{-,2}(\HD_3) \rightarrow \HFK^{-,2}(\HD_4).\] Since $f_{3,4}$ respects the filtrations of $\CFK^{-,2}(\HD_3)$ and $\CFK^{-,2}(\HD_4)$ induced by $A_{-U}$, it follows from Propositions \ref{prop:bottomfiltrationa} and \ref{prop:bottomfiltration} that $F_{3,4}$ sends $[\x_3]$ to $[\x_4]$.

Recall from Subsection \ref{sub:hfk} that the inclusion map \[(i^2_{1,2})_*:\HFKm(\HD_1)\rightarrow\HFK^{-,2}(\HD_2)\] from Section \ref{sec:braid_char} induces an isomorphism from $\HFKm(\HD_1)$ to the summand $(\cap_{i=1}^2 \coker \psi_{w_i})[2]$ of $\HFK^{-,2}(\HD_2)[2],$ where $w_1,w_2$ are the basepoints in $\ws_{-U}$. Since the basepoints actions $\psi_{w_1},\psi_{w_2}$ commute with the maps associated to Heegaard moves, the composition \[F_{4,5}\circ F_{3,4}\circ F_{3,2}^{-1}:\HFK^{-,2}(\HD_2)\rightarrow \HFK^{-,2}(\HD_5)\] restricts to an isomorphism from this summand to the analogous summand of $\HFK^{-,2}(\HD_5)[2]$. Finally, the projection map \[(j^2_{5,6})_*:\HFK^{-,2}(\HD_5)\rightarrow\HFK^{-}(\HD_6)\] defined in Section \ref{sec:comb_char} restricts to an isomorphism from the summand $(\cap_{i=1}^2 \coker \psi_{w_i})[2]$ of $\HFK^{-,2}(\HD_5)[2]$ to $\HFKm(\HD_6)$. This proves that the composition \[(j^2_{5,6})_*\circ F_{4,5}\circ F_{3,4}\circ F_{3,2}^{-1}\circ(i^2_{1,2})_*:\HFK^{-}(\HD_1)\rightarrow \HFK^{-}(\HD_6)\] is an isomorphism. By Propositions \ref{prop:transversedestaba} and \ref{prop:transversedestab} and the discussion about $F_{3,4}$ above, this composition also sends $t(K):=[\x_1]$ to $\theta(K):=[\x_6]$. In other words, we have shown the following.

\begin{theorem}
\label{thm:braid=grid}
Let $K$ be a transverse knot in $(S^3,\xi_{std})$. There exists an isomorphism of bigraded $\F[U]-$modules, \[\psi:\HFKm(-S^3,K)\rightarrow\HFKm(-S^3,K),\] which sends $t(K)$ to $\theta(K)$.\footnote{An analogous statement holds for transverse links in $(S^3,\xi_{std})$.}
\end{theorem}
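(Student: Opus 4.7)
The plan is to splice together the two filtration-based reformulations developed in Sections \ref{sec:braid_char} and \ref{sec:comb_char}. The crucial observation is that, once we reinstate the basepoints $\zs_U$ and $\zs_{-U}$, the diagrams $\HD_3$ and $\HD_4$ both encode the same two-component link $K \cup -U \subset -S^3$. Since any two multi-pointed Heegaard diagrams for a link are related by a sequence of Heegaard moves in the complement of their basepoints, there is an induced bigraded isomorphism
\[
F_{3,4} : \HFK^{-,2}(\HD_3) \to \HFK^{-,2}(\HD_4)
\]
that preserves the Alexander grading $A_{-U}$ and hence respects the filtrations $\mscr{F}^{-U}$ on both sides.

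The first key step is to establish that $F_{3,4}([\x_3]) = [\x_4]$. By Propositions \ref{prop:bottomfiltrationa} and \ref{prop:bottomfiltration}, each of these classes is characterized intrinsically in terms of the filtered bigraded quasi-isomorphism type of $\mscr{F}^{-U}$: it is the unique generator of the rank-one top-Maslov summand of the homology of the bottom filtration level, sitting in Maslov grading $sl(K)-1$ in both cases. Because $F_{3,4}$ is a filtered bigraded isomorphism, it is forced to carry one of these distinguished classes to the other. Combining this with Propositions \ref{prop:transversedestaba} and \ref{prop:transversedestab}, which track the generators along the chain $\x_1 \to \x_2 \to \x_3$ and $\x_4 \to \x_5 \to \x_6$, we can then assemble the five-term composition
\[
\Psi \;:=\; (j^2_{5,6})_* \circ F_{4,5} \circ F_{3,4} \circ F_{3,2}^{-1} \circ (i^2_{1,2})_* \;:\; \HFKm(\HD_1) \to \HFKm(\HD_6),
\]
which by construction sends $t(K)$ to $\theta(K)$.

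The remaining issue is to verify that $\Psi$ is genuinely an isomorphism of bigraded $\F[U]$-modules. The subtlety is that $(i^2_{1,2})_*$ and $(j^2_{5,6})_*$ identify $\HFKm(\HD_1)$ and $\HFKm(\HD_6)$ only with the summands $\bigcap_{i} \coker\psi_{w_i}$ of $\HFK^{-,2}(\HD_2)[2]$ and $\HFK^{-,2}(\HD_5)[2]$, respectively, rather than with the entire modules. To conclude that the middle composition $F_{4,5}\circ F_{3,4}\circ F_{3,2}^{-1}$ restricts to an isomorphism between these summands, we invoke the fact, recalled in Subsection \ref{sub:hfk}, that the free basepoint action $\psi_w$ commutes with every map induced by a Heegaard move. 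This commutation is the one genuinely new input the present section contributes; everything else is supplied by the earlier work.

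The principal obstacle, in my view, lies upstream rather than here: the entire argument rests on the sharpness of the uniqueness characterizations of $[\x_3]$ and $[\x_4]$ in Propositions \ref{prop:bottomfiltrationa} and \ref{prop:bottomfiltration}, together with the matching computation $M(\x_3)=M(\x_4)=sl(K)-1$. Once those filtration-level facts are in hand, the map $F_{3,4}$ has no choice but to respect the two distinguished classes, and the rest of the proof is a formal bookkeeping exercise in commuting $\psi_w$ past the Heegaard-move maps.
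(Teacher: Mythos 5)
Your proposal is correct and follows essentially the same route as the paper: the filtered isomorphism $F_{3,4}$ between $\HD_3$ and $\HD_4$ forced to match $[\x_3]$ with $[\x_4]$ via Propositions \ref{prop:bottomfiltrationa} and \ref{prop:bottomfiltration}, the same five-term composition assembled from Propositions \ref{prop:transversedestaba} and \ref{prop:transversedestab}, and the same appeal to the commutation of the free basepoint actions $\psi_{w_i}$ with Heegaard-move maps to see that the composition is an isomorphism. No substantive differences to report.
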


Combined with Theorem \ref{thm:LOSS=t}, this completes the proof of Theorem \ref{thm:LOSS_Grid_trans}. \qed

\bibliographystyle{myalpha}
\nocite{*}
\bibliography{References}

\end{document}